 \newcommand{\Z}{\ensuremath{\mathbb{Z}}}
 \newcommand{\N}{\ensuremath{\mathbb{N}}}
 \newcommand{\R}{\ensuremath{\mathbb{R}}}
 \newcommand{\C}{\ensuremath{\mathbb{C}}}
 \newcommand{\Q}{\ensuremath{\mathbb{Q}}}
 \newcommand{\U}{\ensuremath{\mathbb{U}}}
 \newcommand{\A}{\ensuremath{\mathbb{A}}}
 \newcommand{\bV}{\ensuremath{\mathbb{V}}}
 \newcommand{\M}{\ensuremath{\mathcal{M}}}
 \newcommand{\PP}{\ensuremath{\mathbb{P}}}
 \newcommand{\X}{\ensuremath{\mathcal{X}}}
 \newcommand{\OO}{\ensuremath{\mathcal{O}}}
 \newcommand{\cX}{\ensuremath{\mathscr{X}}}
 \newcommand{\cH}{\ensuremath{\mathscr{H}}}
 \newcommand{\cY}{\ensuremath{\mathscr{Y}}}
 \newcommand{\cV}{\ensuremath{\mathscr{V}}}
 \newcommand{\cC}{\ensuremath{\mathscr{C}}}
 \newcommand{\cE}{\ensuremath{\mathscr{E}}}
 \newcommand{\cL}{\ensuremath{\mathscr{L}}}
 \newcommand{\cF}{\ensuremath{\mathscr{F}}}
\newcommand{\wcX}{\ensuremath{\widetilde{\mathscr{X}}}}
\newcommand{\wpi}{\ensuremath{\widetilde{\pi}}}
\newcommand{\wcL}{\ensuremath{\widetilde{\mathscr{L}}}}
\newcommand{\wcH}{\ensuremath{\widetilde{\mathscr{H}}}}
\newcommand{\wX}{\ensuremath{\widetilde{X}}}
\newcommand{\wL}{\ensuremath{\widetilde{L}}}
\newcommand{\wS}{\ensuremath{\widetilde{S}}}
\newcommand{\wC}{\ensuremath{\widetilde{C}}}
\newcommand{\wH}{\ensuremath{\widetilde{H}}}
\DeclareMathOperator{\Id}{Id}
\DeclareMathOperator{\ord}{ord}
\DeclareMathOperator{\vol}{vol}
\DeclareMathOperator{\Aut}{Aut}
\DeclareMathOperator{\Pic}{Pic}
\DeclareMathOperator{\Bs}{Bs}
\DeclareMathOperator{\Bl}{Bl}
\DeclareMathOperator{\Spec}{Spec}
\DeclareMathOperator{\coeff}{coeff}
\DeclareMathOperator{\SL}{SL}
\DeclareMathOperator{\PGL}{PGL}
\DeclareMathOperator{\CM}{CM}
\DeclareMathOperator{\Proj}{Proj}
\DeclareMathOperator{\NS}{NS}
\DeclareMathOperator{\Sym}{Sym}
\DeclareMathOperator{\rank}{rank}
\newcommand{\wHthree}{\widetilde{H}_3}
\newcommand{\wLprime}{\widetilde{L}'}
\newtheorem{prop}{Proposition}[section]
\newtheorem{lemma}[prop]{Lemma}
\newtheorem{theorem}[prop]{Theorem}
\newtheorem{cor}[prop]{Corollary}
\newtheorem{remark}[prop]{Remark}
\newtheorem{defn}[prop]{Definition}
\title{K-Moduli of Fano Threefolds of Family 3.3}
\author[E. Etxabarri-Alberdi]{Erroxe Etxabarri-Alberdi}
\address{ Basic Sciences department, Mondragon Unibertsitatea, Faculty of Engineering, Loramendi Kalea, 4, 20500 Arrasate, Gipuzkoa, Spain}
\email{eetxabarri@mondragon.edu}
\author[J. M. Jones]{James Matthew Jones}
\address{FB12 / Mathematik und Informatik, Philipps-Universität Marburg, Hans-Meerwein-Str. 6, 35032 Marburg, Germany}
\email{jonesj@mathematik.uni-marburg.de}
\author[T. S. Papazachariou]{Theodoros Stylianos Papazachariou}
\address{Yau Mathematical Sciences Center, Jingzhai, Tsinghua University, Haidian District, Beijing, China, 100084}
\email{tpapazachariou@mail.tsinghua.ed.cn}
\begin{document}

\begin{abstract}
    We explicitly fully describe the K-moduli space of Fano threefold family \textnumero3.3. We first show that K-semistable Fano varieties with volume greater than 18 are Gorenstein canonical and admit general elephants, decreasing the bound on a result by Liu and Zhao. Combining this with the moduli-continuity method via lattice-polarized K3 surfaces, we identify the K-moduli stack parametrising K-semistable varieties in family \textnumero3.3 with a Kirwan blow up of the natural GIT quotient of $(1,1,2)$ divisors in $\PP^1\times \PP^1\times \PP^2$.
\end{abstract}
\maketitle

\section{Introduction}
One of the key goals in modern algebraic geometry is the construction of moduli spaces parametrising varieties up to isomorphism. In recent years, moduli spaces were constructed for Fano varieties via K-stability, a very important subclass of varieties due to their link with the Minimal Model Program (MMP) and mirror symmetry. K-stability is an algebrogeometric notion developed to determine the existence of K{\"a}hler--Einstein (KE) metrics on Fano varieties, and has made great achievements in the construction of K-moduli spaces of such varieties and log Fano pairs. The general K-moduli theorem (cf. Theorem \ref{K-moduli theorem}) is a combination of substantial work by a number of people (cf. \cite{ABHLX20,BHLLX21,BLX19,BX19,CP21,Jia20,LWX21,LXZ22,Xu20,XZ20, OSS16, SSY, XZ21}), which establishes that for fixed dimension $n$ and log-anticanonical volume $v$, the functor of K-semistable log Fano pairs is represented by a separated Artin stack, which admits a projective good moduli space (in the sense of \cite{Alp13});  the closed points of the moduli space are in bijection with isomorphic classes of  K-polystable log Fano pairs.

Despite the monumental achievement already mentioned, the construction of K-moduli spaces is not explicit. It varies from example to example, making the explicit description of K-moduli spaces a difficult problem. As a natural next step, obtaining the full description of all K-(semi/poly)stable degenerations for specific families of Fano varieties has become an integral research topic for algebraic geometers, with exponential success in recent years. 

Most examples use the moduli continuity method, or variants of it, to relate K-moduli spaces to GIT quotients or their modifications. This was initiated from the study of moduli spaces of KE del Pezzo surfaces \cite{mabuchi_mukai_1990, OSS16}, and expanded to Fano threefolds and higher dimensions \cite{spotti_sun_2017, LX19,ADL19, Liu22, ADL19}. Another approach to describe K-moduli of Fano threefolds relies on stability threshold estimates via the Abban--Zhuang method \cite{AZ20}, which allows the immediate detection of singular K-polystable elements 
\cite{Pap22, Abban_Cheltsov_Denisova_Etxabarri-Alberdi_Kaloghiros_Jiao_Martinez-Garcia_Papazachariou_2024, cheltsov2024kmodulifanothreefoldsfamily, abban2024kmoduliquarticthreefolds, devleming2024kmodulispacefamilyconic,CheltsovEtAl2025}. Other methods include a comparative description of K-moduli spaces via products \cite{Pap24}. Most of the above results rely on the existence of general smooth K-stable members, which in the case of Fano threefolds has been mainly achieved by \cite{acc2021}. In recent works \cite{Liu22,LZ24,Zha24,DP25, TP25}, the geometry and moduli space of K3 surfaces are widely used, since they appear as sections in the anticanonical linear series of Fano threefolds. In particular, in \cite{LZ24} it is shown that a K3 surface always exists in the anticanonical linear system of K-semistable singular Fano threefolds of volume $\geq 20$.

In this paper, we study the K-stability and the K-moduli stack (and space) of smooth Fano threefolds in Family \textnumero3.3. Geometrically, these varieties are $(1,1,2)$ divisors in $\PP^1\times \PP^1\times \PP^2$. The volume of such varieties is $18$, hence the analysis in \cite{LZ24} cannot be directly applied. To that end, our first main result is more general, and it shows that the bound in \cite[Theorem 4.2]{LZ24} can be reduced to $18$ (or 16 with some extra conditions). 

\begin{theorem}[See Theorem \ref{gor canonical K-ss limits}]\label{intro:thm_vol}
    Let $X$ be a $\Q$-Gorenstein smoothable K-semistable (weak) $\Q$-Fano threefold with volume $V:=(-K_X)^3\geq 16$. Then the following hold:
    \begin{itemize}
        \item[(1)] The variety $X$ is Gorenstein canonical if $V \geq 18$, and klt with only Gorenstein canonical or $1/4(1,1,0)$ quotient singularities if $V= 16$.
        \item[(2)] If $V \geq 18$, there exists a divisor $S\in \vert -K_X\vert$ such that $(X,S)$ is a plt pair, and such that $(S,-K_X\vert_S)$ is a (quasi-) polarised K3 surface of degree $V$.
        \item[(3)] If $D$ is a $\Q$-Cartier Weil divisor on $X$ which deforms to a $\Q$-Cartier Weil divisor on a $\Q$-Gorenstein smoothing of $X$, then $D$ is Cartier.
    \end{itemize}
\end{theorem}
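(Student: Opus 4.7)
The plan is to follow the overall strategy of Liu--Zhao \cite{LZ24} and push their threshold from $V \geq 20$ down to $V \geq 18$ (and partially to $V = 16$) by a more careful analysis of the borderline singularities that could appear.

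\textbf{Part (1).} First I would apply Liu's inequality: for a K-semistable $\mathbb{Q}$-Fano $X$ of dimension $n$ and any closed point $x\in X$,
$$\widehat{\mathrm{vol}}(x, X) \;\geq\; \Bigl(\frac{n}{n+1}\Bigr)^n (-K_X)^n.$$
In dimension $3$, this yields $\widehat{\mathrm{vol}}(x, X) \geq \tfrac{243}{32}$ when $V \geq 18$, and $\widehat{\mathrm{vol}}(x, X) \geq \tfrac{27}{4}$ when $V \geq 16$. The next step is to enumerate all $\mathbb{Q}$-Gorenstein smoothable klt $3$-fold singularities with normalised volume in these ranges. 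Using the classification and volume computations from Liu--Xu and Liu--Xu--Zhuang together with the known values of $\widehat{\mathrm{vol}}$ for cyclic quotients and cDV singularities, I would verify that every such singularity is Gorenstein canonical when $V \geq 18$. For $V = 16$, the threshold $\widehat{\mathrm{vol}} = \tfrac{27}{4}$ is attained by the non-isolated cyclic quotient $\tfrac{1}{4}(1,1,0)$, so an extra case must be carried along. The hardest part here is ruling out the \emph{smoothable} non-Gorenstein T-singularities (most notably $\tfrac{1}{2}(1,1,1)$ and its generalisations, which have $\widehat{\mathrm{vol}}$ well above $\tfrac{243}{32}$); I would use the explicit volume formulas for T-singularities together with their smoothability constraints to exclude them.

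\textbf{Part (2).} Once $X$ is Gorenstein canonical, $-K_X$ is Cartier and $\dim |-K_X|$ is controlled by Kawamata--Viehweg vanishing. I would then invoke Shokurov's general elephant theorem (as refined by Reid and Kawamata) for Gorenstein canonical Fano threefolds to produce $S\in |-K_X|$ with $S$ normal. The pair $(X,S)$ is plt by inversion of adjunction, and adjunction gives $K_S = (K_X+S)|_S = 0$ with $S$ having at worst Du Val singularities. Thus $S$ is a (quasi-)polarised K3 surface, and the polarisation degree is $(-K_X|_S)^2 = (-K_X)^3 = V$.

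\textbf{Part (3).} Gorenstein canonical threefold singularities are compound Du Val by Reid's classification, and their local class groups are generated by small resolution curves. The $\mathbb{Q}$-Gorenstein smoothing provides a specialisation map $\mathrm{Cl}(X_t) \to \mathrm{Cl}(X_0)$ on nearby fibres; because $X_t$ is smooth we have $\mathrm{Cl}(X_t) = \mathrm{Pic}(X_t)$, so a Weil divisor on $X = X_0$ that deforms to $X_t$ must lie in the image of this map, which is precisely the Cartier subgroup at each cDV point. Concretely, one works locally at each singular point of $X$, identifies the class group with the lattice of exceptional curves of a small resolution (or the Milnor-type lattice for the smoothing), and checks that the subgroup of classes extending to a nearby smoothing equals the subgroup of Cartier classes. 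Combining this local statement with the classification of singularities from Part (1) gives (3).

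\textbf{Main obstacle.} The crux is the classification step in Part (1): exhaustively excluding smoothable non-Gorenstein klt threefold singularities whose normalised volume lies in the narrow window $[243/32,\, 27/2]$. This is where the improvement over \cite{LZ24} is most delicate, and any new singularity type appearing at the $V = 16$ boundary (such as $\tfrac{1}{4}(1,1,0)$) must be carefully distinguished from the Gorenstein canonical ones in the subsequent arguments.
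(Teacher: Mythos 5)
Your overall architecture (Liu's local-to-global volume inequality, a singularity analysis, general elephants for (2), a local factoriality statement for (3)) coincides with the paper's, and Part (2) is essentially correct. The decisive gap is in Part (1): you propose to ``enumerate all $\Q$-Gorenstein smoothable klt $3$-fold singularities with normalised volume'' above $243/32$ by appealing to an existing classification, but no such classification exists --- the paper even remarks that pushing the bound below $16$ is infeasible precisely because klt quotients of cDV singularities are not classified. Supplying enough of that classification in the window $[27/4,\,16]$ is the entire content of the proof, and it requires steps your plan does not contain: (i) passing to the index-one cover $\pi:(\tilde x\in\widetilde X)\to(x\in X)$ and combining $\widehat{\operatorname{vol}}(\tilde x,\widetilde X)=d\cdot\widehat{\operatorname{vol}}(x,X)$ with the upper bound $\widehat{\operatorname{vol}}(\tilde x,\widetilde X)\le 16$ of \cite{XZ21} to force $d\le 2$ unless $\widetilde X$ is smooth; (ii) when $d=2$ and $\widetilde X$ is singular, splitting into the Gorenstein terminal (= cDV) case and the case of a canonical point carrying a crepant divisor, and in the cDV case excluding non-$cA$ points, $cA_{\ge 2}$ points and $A_{k\ge 5}$ points by explicit normalised weighted blow-ups (weights $(3,3,2,2)$ and $(3,3,3,1)$), which give $\widehat{\operatorname{vol}}\le 32/3$ resp.\ $6$, contradicting $\widehat{\operatorname{vol}}\ge 13.5$; (iii) excluding the crepant-divisor case by a $G$-equivariant MMP on a maximal crepant model reducing to the bounds of \cite[Proposition 3.4]{LX19}; and (iv) showing that the $\Z/2$-quotient of an $A_k$ point is again cDV. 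Your plan touches none of (ii)--(iv); the T-singularity discussion you foreground (ruling out $\tfrac12(1,1,1)$ etc.\ via \cite{KSB88}) covers only the easy ``smooth cover'' branch, which is where the $\tfrac14(1,1,0)$ case at $V=16$ also lives.

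Part (3) as written is not yet a proof: the assertion that ``the subgroup of classes extending to a nearby smoothing equals the subgroup of Cartier classes'' at each singular point is exactly the statement to be established, and the proposed identification with a lattice of exceptional curves is not carried out. The paper's route is shorter: by Part (1) every singular point of $X$ is a hypersurface singularity, hence the total space $\cX$ of the smoothing has four-dimensional hypersurface singularities, and one concludes that $\mathcal D$ is Cartier near the central fibre by the factoriality argument of \cite[Proof of Theorem 4.2(3)]{LZ24}. I would replace your local class-group computation with that observation, which also makes clear why the output of Part (1) (all singularities are hypersurface singularities) is exactly what Part (3) needs.
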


The proof of the above is more involved than the similar statement in \cite{LZ24}, and requires estimates established in \cite{LX19}. Using this, we can determine that each member in this K-moduli space is either a $(1,1,2)$ divisor in $\PP^1\times \PP^1\times \PP^2$ or a $(2,2)$ divisor in $\PP(1,1,2)\times \PP^2$. 

\begin{theorem}[See Corollary \ref{cor:main}]\label{intro:thm1}
    Let $\M^K_{\textup{№3.3}}$ be the K-moduli stack of the family \textnumero3.3, admitting good moduli space $M^K_{\textup{№3.3}}$. Then,
    \begin{enumerate}
        \item Every K-semistable Fano variety $X$ in $\M^K_{\textup{№3.3}}$ is isomorphic to either a $(1,1,2)$ divisor in $\PP^1\times \PP^1\times \PP^2$ or a $(2,2)$ divisor in $\PP(1,1,2)\times \PP^2$, and
        \item The K-moduli stack $\M^K_{\textup{№3.3}}$ is a smooth connected component of $\M^K_{3,18}$, and the K-moduli space $M^K_{\textup{№3.3}}$ is a normal projective variety.
    \end{enumerate}
\end{theorem}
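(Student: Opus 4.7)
The plan is to combine Theorem~\ref{intro:thm_vol} with a Picard-lattice analysis of the K3 elephant to classify the members of $\M^K_{\textup{№3.3}}$ pointwise, then deduce the stack-theoretic statements from openness/closedness properties of K-moduli together with standard deformation theory on a toric ambient fourfold.

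For part (1), since family \textnumero3.3 has volume $18$, Theorem~\ref{intro:thm_vol} applies to every $X\in\M^K_{\textup{№3.3}}$: $X$ is Gorenstein canonical, it carries an elephant $S\in|-K_X|$ making $(X,S)$ plt with $(S,-K_X|_S)$ a degree-$18$ quasi-polarised K3 surface, and every Weil divisor deforming from a $\Q$-Gorenstein smoothing of $X$ is Cartier. A smooth member of family \textnumero3.3 carries three tautological Cartier classes $H_1,H_2,H_3$ pulled back from $\PP^1\times\PP^1\times\PP^2$; by the Cartierness statement these classes extend to Cartier divisors on $X$ for every $X\in\M^K_{\textup{№3.3}}$ and give morphisms $\varphi_i\colon X\to\PP^{n_i}$. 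Restricting to $S$, adjunction identifies the $H_i|_S$ with generators of the corresponding sublattice of $\NS(S)$; the intersection pattern determined by the polarised K3 forces $\varphi_1\times\varphi_2$ to map $X$ onto a quadric surface in $\PP^3$ and $\varphi_3$ onto $\PP^2$, while the bidegree of $X$ with respect to these factors must be $(1,1,2)$. The only two possible quadrics are the smooth $\PP^1\times\PP^1$ and the cone $\PP(1,1,2)$, yielding the two ambient models in the statement.

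For part (2), let $Z\subset\M^K_{3,18}$ be the locus parametrising varieties of these two types. Openness of K-semistability together with openness of the relevant Hilbert schemes shows $Z$ is open in $\M^K_{3,18}$, and part (1) gives closedness under specialisation, so $Z$ is a union of connected components. Since every $(2,2)$-divisor in $\PP(1,1,2)\times\PP^2$ arises as a flat specialisation of smooth $(1,1,2)$-divisors in $\PP^1\times\PP^1\times\PP^2$ via the quadric-cone degeneration of $\PP^1\times\PP^1$, the locus $Z$ is connected and equals $\M^K_{\textup{№3.3}}$. For smoothness, I would embed each $X$ as an anticanonical divisor in a toric Fano fourfold $W\in\{\PP^1\times\PP^1\times\PP^2,\PP(1,1,2)\times\PP^2\}$ and use the normal bundle sequence
\[
0\to T_X\to T_W|_X\to N_{X/W}\to 0
\]
together with Kodaira-type vanishing on $W$ to show that $\Q$-Gorenstein deformations of $X$ are unobstructed. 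Normality and projectivity of $M^K_{\textup{№3.3}}$ then follow from the general K-moduli theorem combined with the fact that a good moduli space of a smooth Artin stack is normal \cite{Alp13}.

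The main obstacle is the Picard-lattice analysis in (1): one must rule out other toric degenerations of $\PP^1\times\PP^1\times\PP^2$ compatible with the polarised K3 elephant, and confirm that the only permissible degeneration of the quadric factor is to $\PP(1,1,2)$. The remaining steps reduce to relatively standard toric and deformation-theoretic computations once the ambient model has been pinned down.
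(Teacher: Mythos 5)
Your overall strategy---apply Theorem \ref{intro:thm_vol}, use the K3 elephant with its lattice polarization to pin down the ambient model, then get smoothness from unobstructedness of divisorial deformations---is the same as the paper's, and your treatment of part (2) matches Proposition \ref{deformation} and Corollary \ref{cor:stack-smooth} closely (modulo the slip that $X$ is a $(1,1,2)$ divisor, not an anticanonical divisor, of $\PP^1\times\PP^1\times\PP^2$; the normal-bundle-sequence argument is unaffected).

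However, there is a genuine gap in part (1), precisely at the step ``these classes extend to Cartier divisors on $X$ \ldots and give morphisms $\varphi_i\colon X\to\PP^{n_i}$.'' The classes $H_i$ on the central fibre are obtained as Zariski closures of Weil divisors, and a priori they are not even $\Q$-Cartier on $X$, let alone nef or globally generated; Theorem \ref{intro:thm_vol}(3) only applies to divisors that are already $\Q$-Cartier. The paper spends all of Section \ref{sec:birational models} overcoming exactly this: one passes to a small $\Q$-factorialization $\cY$ of the total space, runs two separate relative MMPs to reach models $\wcX$ and $\wcX'$ on which $\wcH_3$ (resp.\ $\wcL'=\wcH_1'+\wcH_2'$) becomes relatively ample and Cartier, proves nefness and semiampleness of these divisors by restricting to the K3 elephant and invoking the isomorphisms of lattice-polarized moduli stacks $\cF_{\Lambda_1}\cong\cF_{\Lambda_2}\cong\cF_{\Lambda_3}$ (Lemma \ref{samefamily}) together with the exclusion of unigonal degenerations (Lemmas \ref{lem:no_unigonal}, \ref{lem:no_unigonal_deg18}) and the section-lifting Lemmas \ref{cohomology iso for H3} and \ref{isomorphismonsection2}, and only then obtains the ample models $V_3\cong\PP^2$ and $V_{\wL'}\in\{\PP^1\times\PP^1,\PP(1,1,2)\}$. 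Even then the map to $V_{\wL'}\times\PP^2$ is only defined on a big open set, and $X$ is recovered as the anticanonical model of the closure of the image (Propositions \ref{prop:classW}--\ref{prop:X=W}). A related point: one must work with $L=H_1+H_2$ rather than $H_1,H_2$ separately, since on the cone degeneration the two rulings do not individually extend to morphisms; your ``$\varphi_1\times\varphi_2$'' presupposes the smooth quadric case. Without this intermediate machinery the classification in (1) does not go through as written.
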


Using the above result, we relate the moduli stack $\M^K_{\textup{№3.3}}$ to a natural GIT compactification.

Let $W= \PP(H^0(\PP^1\times \PP^1\times \PP^2,\mathcal{O}(1,1,2)))$ be the parameter space of $(1,1,2)$ divisors $X$ in $\PP^1\times \PP^1\times \PP^2$. We consider the associated GIT quotient
$$M^{GIT}_{1,1,2} \coloneqq W\sslash\PGL(2)\times \PGL(2)\times \PGL(3)$$
which yields a natural projective birational model for the K-moduli space $M^K_{\textup{№3.5}}$.

Let $\widetilde{X}$ be the non-reduced, reducible $(1,1,2)$ divisor, where $X = \mathbb{V}(\tilde{f})$, with 
$$\tilde{f} = (z_1^2+z_0z_2)(x_0y_0+x_1y_0+x_0y_1+x_1y_1).$$
Consider the Kirwan blow-up $\widetilde{\mathcal{U}}^{ss}_{1,1,2}$ of the GIT moduli stack $\M^{GIT}_{1,1,2}$ at the point parametrising $\widetilde{X}$. Our second result identifies the K-moduli space (resp. stack) with this Kirwan blow up.

\begin{theorem}\label{intro: thm 2}
   There is a natural isomorphism 
    $$\M^K_{\textup{№3.3}}\cong \left[\widetilde{\mathcal{U}}^{ss}_{1,1,2}/\PGL(2)\times \PGL(2)\times \PGL(3)\right],$$
    which descends to a canonical isomorphism of the good moduli spaces:
 $$M^K_{\textup{№3.3}}\ \simeq \ \widetilde{\mathcal{U}}^{ss}_{1,1,2}\sslash\PGL(2)\times \PGL(2)\times \PGL(3),$$
\end{theorem}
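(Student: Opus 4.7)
The plan is to construct a representable morphism $\Phi\colon [\widetilde{\mathcal{U}}^{ss}_{1,1,2}/G]\to \M^K_{\textup{№3.3}}$, with $G\coloneqq\PGL(2)\times\PGL(2)\times\PGL(3)$, and then verify it is an isomorphism by checking bijectivity on closed points and \'etaleness. By Theorem \ref{intro:thm1}, every K-semistable object is either a $(1,1,2)$-divisor in $\PP^1\times\PP^1\times\PP^2$ or a $(2,2)$-divisor in $\PP(1,1,2)\times\PP^2$; on the GIT-stable open subset $\mathcal{U}^{s}_{1,1,2}\subset W$ the tautological family of $(1,1,2)$-divisors is $G$-equivariant, its generic member is K-stable (cf.\ \cite{acc2021}), and openness of K-semistability (\cite{BLX19, Xu20}) together with representability of the K-moduli functor produces a morphism $[\mathcal{U}^{s}_{1,1,2}/G]\to\M^K_{\textup{№3.3}}$.

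The next step is to extend this morphism over the strictly polystable locus of the GIT quotient. Classify the closed $G$-orbits in $\mathcal{U}^{ss}_{1,1,2}\setminus\mathcal{U}^{s}_{1,1,2}$ via the Hilbert--Mumford criterion; for every such orbit other than that of $\widetilde{X}$, verify that the underlying threefold is K-polystable using stability threshold estimates in the spirit of \cite{Pap22, cheltsov2024kmodulifanothreefoldsfamily}. The unique obstruction is $\widetilde{X}$: being non-reduced and reducible, it admits destabilising test configurations and is not K-semistable, so the morphism has indeterminacy precisely along the orbit $G\cdot\widetilde{X}$.

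The Kirwan blow-up is designed to resolve exactly this indeterminacy. By Luna's \'etale slice theorem at $\widetilde{X}$, compute the normal slice to $G\cdot\widetilde{X}$ in $W$ and the reductive stabiliser $G_{\widetilde{X}}$, which preserves the factorisation of $\tilde{f}$. The exceptional quotient in $\widetilde{\mathcal{U}}^{ss}_{1,1,2}$ is the projectivised slice modulo $G_{\widetilde{X}}$; identify this $G$-equivariantly with the GIT parameter space of $(2,2)$-divisors in $\PP(1,1,2)\times\PP^2$, using that the factor $z_1^2+z_0z_2$ of $\tilde{f}$ cuts out the quadric cone $\PP(1,1,2)\hookrightarrow\PP^3$, so the normal directions encode degenerations of the smooth quadric $\PP^1\times\PP^1$ to this cone. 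After verifying K-polystability of every $(2,2)$-divisor appearing on the exceptional locus (via the Abban--Zhuang method \cite{AZ20}, or by matching CM-line-bundle computations with \cite{CP21}), the morphism extends to all of $\widetilde{\mathcal{U}}^{ss}_{1,1,2}$.

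Both $[\widetilde{\mathcal{U}}^{ss}_{1,1,2}/G]$ and $\M^K_{\textup{№3.3}}$ are normal, of the same dimension, and have proper good moduli spaces; bijectivity of $\Phi$ on closed points follows from Theorem \ref{intro:thm1} combined with the orbit analysis above, and \'etaleness from comparing Luna charts on the GIT side with the local deformation-theoretic charts on the K-moduli side (cf.\ \cite{ABHLX20}). The descent to good moduli spaces yields the second isomorphism. The principal obstacle is the matching at $\widetilde{X}$: computing $G_{\widetilde{X}}$ and the normal slice precisely, identifying the projectivised slice with the parameter space of $(2,2)$-divisors in $\PP(1,1,2)\times\PP^2$, and verifying K-polystability of every exceptional limit form the technical core of the proof.
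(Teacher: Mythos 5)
Your overall architecture matches the paper's: build the map on the stable locus, resolve the indeterminacy at $\widetilde{X}$ via the Kirwan blow-up, identify the exceptional locus with the parameter space of $(2,2)$ divisors in $\PP(1,1,2)\times\PP^2$, and conclude by a local (Luna chart vs.\ deformation chart) comparison, which is what Alper's criterion formalises in the paper's Theorem \ref{thm: iso of stacks}. The geometric picture of the slice at $\widetilde{X}$ (the factor $z_1^2+z_0z_2$ cutting out the quadric cone, normal directions encoding the degeneration of $\PP^1\times\PP^1$ to $\PP(1,1,2)$) agrees with Lemma \ref{lem: luna slice} and Theorem \ref{thm: blow up on families}.

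However, there is a genuine gap in your treatment of the central step, namely showing that GIT-polystable boundary members are K-polystable. You propose to ``verify that the underlying threefold is K-polystable using stability threshold estimates'' (Abban--Zhuang, or CM-line-bundle matching) for each closed orbit, both in $\mathcal{U}^{ss}_{1,1,2}$ and on the exceptional divisor. This is not carried out, and it is not a realistic substitute: the strictly polystable members include varieties with non-isolated multiplicity-$2$ singularities, eight $A_1$ points, pairs of $A_3$ or $D_4$ points, etc., and direct $\delta$-invariant computations for such degenerate threefolds are out of reach in general. The paper avoids this entirely via the moduli continuity method (Theorem \ref{thm:K-ps iff GIT ps}): the direction K-polystable $\Rightarrow$ GIT-polystable comes from ampleness of the CM line bundle and \cite[Theorem 2.22]{ADL19}, while the converse uses properness of the K-moduli space to produce a K-polystable limit of a smoothing, the classification of all such limits as $(1,1,2)$ or $(2,2)$ divisors (Corollary \ref{cor:main}), separatedness of the GIT quotient to force that limit into the same orbit, and openness of K-semistability. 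No stability threshold is ever computed for a singular member. You should replace your ``verify by Abban--Zhuang'' step with this argument; as written, the core of your proof is a placeholder for computations that neither you nor the paper performs. Two smaller points: the obstruction at $\widetilde{X}$ is not that it ``admits destabilising test configurations'' --- being non-reduced and reducible, it is not a $\Q$-Fano variety at all, so it simply does not define a point of the K-moduli functor; and your final \'etaleness/local-chart comparison implicitly requires unobstructedness of deformations of both types of divisors (Proposition \ref{deformation} and Corollary \ref{cor:stack-smooth}), which you should state explicitly.
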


In addition, we fully describe the GIT semistable, polystable and stable orbits of the GIT quotient $M^{GIT}_{1,1,2}$, using computational methods from \cite{karagiorgis2023gitstabilitydivisorsproducts}. We also study the exceptional divisor of the Kirwan blow up $\widetilde{\mathcal{U}}^{ss}_{1,1,2}$, and its stable, polystable and semistable orbits using \cite[\S 3]{devleming2024kmodulispacefamilyconic}. As an immediate consequence of the above, we are able to obtain a full explicit description of all K-(semi/poly)stable elements in family \textnumero3.3.

\begin{theorem}[See Theorem \ref{thm: full K-ss description}]\label{intro:thm_description of K-st objects}
    A Fano threefold in family \textnumero3.3 is
    \begin{enumerate}
        \item K-stable if and only if it is smooth;
        \item strictly K-semistable if and only if $X$ is either a $(1,1,2)$ divisor in $\PP^1\times \PP^1\times\PP^2$ with either
        \begin{enumerate}
            \item non-isolated singularities of multiplicity $2$, or
       \item twelve $A_1$ singulatities, or
       \item one $A_3$ singularity, or
       \item one $A_3$ and one $A_1$ singularity, or
       \item one $D_4$ singularity.
        \end{enumerate}
        or a $(2,2)$ divisor in $\PP(1,1,2)_{u,v,s}\times\PP^2_{\mathbf{w}}$ in which case $X = V(f)$, where $f = s(w_0w_1+w_2^2)+g_{2,2}(u,v,\mathbf{w})$, such that $Y = V(g_{2,2})$ is a $(2,2)$ divisor in $\PP^1\times \PP^2$ which has 
        \begin{enumerate}
           \item non isolated singularities along the point at infinity of $\PP^1$ and is reducible, or
           \item two $A_1$ singularities, or
           \item one $A_2$ singularity, or 
           \item one $A_3$ singularity;
        \end{enumerate}
        \item strictly K-polystable if and only
        if $X$ is either a $(1,1,2)$ divisor in $\PP^1\times \PP^1\times\PP^2$ with either
        \begin{enumerate}
           \item two non-isolated singularities of multiplicity $2$, or
       \item eight $A_1$ singularities, or
       \item two $A_3$ singularities, or
       \item two $A_3$ and two $A_1$ singularities, or
       \item two $D_4$ singularities.
        \end{enumerate}
        or a $(2,2)$ divisor in $\PP(1,1,2)_{u,v,s}\times\PP^2_{\mathbf{w}}$ in which case $X = V(f)$, where $f = s(w_0w_1+w_2^2)+g_{2,2}(u,v,\mathbf{w})$, such that $Y = V(g_{2,2})$ is a $(2,2)$ divisor in $\PP^1\times \PP^2$ which has  
        
        \
        \begin{enumerate}
           \item non isolated singularities along along the points at zero and at infinity of $\PP^1$ and is reducible, or
           \item four $A_1$ singularities, or
           \item two $A_2$ singularities, or 
           \item two $A_3$ singularities.
        \end{enumerate}
    \end{enumerate}
\end{theorem}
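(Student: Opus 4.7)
The plan is to transfer the question to the GIT side via Theorem~\ref{intro: thm 2} and then perform an explicit Hilbert--Mumford analysis, splitting the K-semistable locus into the two families provided by Theorem~\ref{intro:thm1}(1).

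For part~(1), the general smooth member of family \textnumero3.3 is known to be K-stable by \cite{acc2021}. Since by Theorem~\ref{intro:thm1}(2) the stack $\M^K_{\textup{№3.3}}$ is a smooth connected component of $\M^K_{3,18}$, openness of K-stability together with the connectedness of the smooth locus in the parameter space $W$ promotes this to K-stability of every smooth member. Conversely, any singular K-semistable member is strictly K-semistable because Theorem~\ref{intro:thm_vol} forces it to degenerate to a singular K-polystable object, and K-stable points have discrete automorphism groups on the smooth locus. This settles (1).

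For parts~(2) and (3), by Theorem~\ref{intro: thm 2} the K-moduli stack is the Kirwan blow-up $\widetilde{\mathcal{U}}^{ss}_{1,1,2}/G$ with $G=\PGL(2)\times\PGL(2)\times\PGL(3)$, so the two listed families correspond exactly to the complement of the Kirwan exceptional divisor (giving the $(1,1,2)$ divisors in $\PP^1\times\PP^1\times\PP^2$) and to the exceptional divisor itself (giving the $(2,2)$ divisors in $\PP(1,1,2)\times\PP^2$). In the first case, I would run the algorithm of \cite{karagiorgis2023gitstabilitydivisorsproducts}: after fixing a maximal torus $T\subset G$, enumerate one-parameter subgroups up to the Weyl group action, list maximal non-semistable (and non-stable) sets of monomials in $H^0(\OO(1,1,2))$, and extract from these the finitely many $T$-invariant strictly semistable and polystable normal forms. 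Each resulting equation is then analysed locally to read off its singularity type, matching it against the five entries in (2) and (3). The polystable representatives are detected as the orbit closures within each S-equivalence class; the doubling phenomenon (``twelve $A_1$'' degenerating to ``eight $A_1$'', etc.) is exactly the closure of a semistable orbit onto its polystable centre under a diagonal torus action, which is routine once the Hilbert--Mumford data is in place.

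In the second case, I would identify the exceptional divisor with the GIT quotient of a Luna slice at the $G$-orbit of $\widetilde X$, following \cite[\S 3]{devleming2024kmodulispacefamilyconic}. The stabiliser of $\widetilde{f}=(z_1^2+z_0z_2)(x_0y_0+x_1y_0+x_0y_1+x_1y_1)$ contains the one-parameter subgroup acting on $(z_0,z_1,z_2)$ with weights $(1,0,-1)$ preserving the conic, together with diagonal scalings of the $(x,y)$ factors. The slice is spanned by $G$-equivariant complements, and the degeneration to $\PP(1,1,2)\times\PP^2$ arises by rescaling $z_1^2+z_0 z_2$ to the quadric $w_0 w_1+w_2^2$ and absorbing the linear $(x,y)$-factor into $s\in\PP(1,1,2)$. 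This yields the normal form $f=s(w_0w_1+w_2^2)+g_{2,2}(u,v,\mathbf{w})$, and the residual GIT problem for the stabiliser action on the slice reduces to the classical GIT of $(2,2)$-divisors in $\PP^1\times\PP^2$, whose semistable/polystable orbits with their singularities are again obtained by a Hilbert--Mumford enumeration. The four listed cases in (2) and (3) of this family appear as the output.

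The main obstacle is the combinatorial bookkeeping of the Hilbert--Mumford analysis, in particular producing the exhaustive list of 1-PS classes modulo the Weyl group in the non-simple product group $G$, and verifying that no polystable orbit is missed when passing between the ambient GIT problem on $W$ and the Luna slice problem on the exceptional divisor. A further delicate point is the singularity bookkeeping: each explicit normal form must be shown to give exactly the claimed singularity type, and orbit closure relations must be checked to identify which strictly semistable configuration S-equivalent to which polystable one. Once these two bookkeeping tasks are completed, parts (2) and (3) follow directly from Theorem~\ref{intro: thm 2}.
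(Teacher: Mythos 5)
Your overall strategy for parts (2) and (3) --- transferring the classification to GIT via Theorem \ref{intro: thm 2}, running the Hilbert--Mumford enumeration of \cite{karagiorgis2023gitstabilitydivisorsproducts} for the $(1,1,2)$ divisors, and identifying the exceptional divisor of the Kirwan blow-up with the GIT problem for $(2,2)$ divisors in $\PP^1\times\PP^2$ via a Luna slice following \cite[\S 3]{devleming2024kmodulispacefamilyconic} --- is exactly the route the paper takes (Propositions \ref{stable divisors}--\ref{polystable divisors}, Theorems \ref{GIT ss for 2,2} and \ref{GIT ps for 2,2}, Lemma \ref{lem: luna slice}, and Theorems \ref{thm: blow up on families}--\ref{thm: iso of stacks}). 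The ``bookkeeping obstacles'' you flag are precisely the content of those sections, so on this front the proposal is sound.

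However, your argument for part (1) has a genuine gap. First, openness of K-stability together with connectedness of the smooth locus does \emph{not} upgrade K-stability of a general smooth member to K-stability of every smooth member: openness only says the K-stable locus is open in the parameter space, not closed, so it could a priori be a proper open subset of the connected smooth locus. The paper instead relies on the prior result that \emph{all} smooth members of family \textnumero3.3 are K-stable (\cite[\S 5.12]{acc2021}, \cite{CHELTSOV_FUJITA_KISHIMOTO_OKADA_2023}). Second, your converse direction --- that a singular K-semistable member is strictly K-semistable because Theorem \ref{intro:thm_vol} ``forces it to degenerate to a singular K-polystable object'' --- does not follow: that theorem only controls the singularities and anticanonical sections of K-semistable limits and says nothing about degenerations to polystable objects, and a singular Fano threefold can in general be K-stable, so the appeal to discreteness of automorphism groups is not sufficient. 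The mechanism the paper uses is the identification of K-(semi/poly)stability with GIT-(semi/poly)stability on the Kirwan blow-up (Theorems \ref{thm:K-ps iff GIT ps} and \ref{thm: iso of stacks}), combined with Lemma \ref{singular 1,1,2 divisors} and Proposition \ref{stable divisors} showing a $(1,1,2)$ divisor is GIT-stable if and only if it is smooth, and the fact that every point of the exceptional divisor is strictly (semi/poly)stable. You should replace your part (1) argument with this.
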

\subsection*{Structure of the paper}
In Section \ref{sec:preliminaries} we recover some of the basic definitions on K-stability and K-moduli, as well as the definitions for K3 surfaces, and the moduli spaces of K3 surfaces. In Section \ref{sec: K-ss limits} we prove Theorem \ref{intro:thm_vol}, and we show that every K-semistable limit in family \textnumero3.3 is isomorphic to either a $(1,1,2)$ divisor in $\PP^1\times \PP^1\times \PP^2$ or a $(2,2)$ divisor in $\PP(1,1,2)\times \PP^2$, proving the first part of Theorem \ref{intro:thm1}. In Section \ref{sec:k-moduli} we study the GIT quotient of $(1,1,2)$ divisors in $\PP^1\times \PP^1\times \PP^2$, and we construct the Kirwan blow up $\widetilde{\mathcal{U}}^{ss}_{1,1,2}$ via a Luna slice. We furthermore show that GIT (semi/poly)stability corresponds to K-stability of elements in family \textnumero3.3 (after taking the Kirwan blow up), effectively proving Theorem \ref{intro: thm 2}. We also prove the second part of Theorem \ref{intro: thm 2} by studying the deformation theory of $(1,1,2)$ divisors in $\PP^1\times \PP^1\times \PP^2$ and $(2,2)$ divisors in $\PP(1,1,2)\times \PP^2$.

\subsection*{Acknowledgments}
We would like to thank Hamid Abban, Ivan Cheltsov, Elena Denisova, Yuchen Liu and Junyan Zhao for helpful discussions and useful comments. TSP is supported by  Beijing Natural Science Foundation Project IS25037 and a Shuimu Scholar Programme fellowship by Tsinghua University.

\section{Preliminaries}\label{sec:preliminaries}
\subsection{K-stability and K-moduli spaces}
In this section, we follow conventions and notation from  \cite{acc2021}.
\begin{defn}
A normal projective variety $X$ is called a $\Q$-\textup{ Fano variety} if $-K_X$ is an ample $\Q$-Cartier divisor and if $X$ is a Kawamata log terminal (\textup{klt}) variety.
\end{defn}

Now we define  K-stability for  Fano varieties, by first introducing the following birational invariants.

\begin{defn}
Let $X$ be an $n$-dimensional Fano variety, and $E$ a prime divisor on a normal projective variety $Y$, where $\pi:Y\rightarrow X$ is a birational morphism. Then the \textup{log discrepancy} of $X$ with respect to $E$ is $$A_{X}(E):=1+\coeff_{E}(K_Y-\pi^{*}(K_X)).$$ 
The \textup{expected vanishing order} of $X$ with respect to $E$ (also referred to as the \emph{$S$-invariant}) is $$S_{X}(E):= \frac{1}{(-K_X)^n}\int_0^{\tau}\vol(\pi^*(-K_X)-xE)dx,$$ 
(see the definition of $\vol$ in \cite[\S 2.2. C]{Laz04}).

The \textup{$\beta$-invariant} of $X$ with respect to $E$ is then 
$$\beta_{X}(E):= A_{X}(E)-S_{X}(E).$$
\end{defn}

\begin{theorem}[cf. {\textup{\cite{Fuj19b,Li17,BX19}}}]
    The following assertions hold:
    \begin{itemize}
        \item $X$ is K-stable if and only if $\beta_{X}(E)>0$ for every prime divisor $E$ over $X$;
        \item $X$ is K-semistable if and only if $\beta_{X}(E)\geq0$ for every prime divisor $E$ over $X$.
    \end{itemize}
\end{theorem}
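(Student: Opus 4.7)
The plan is to establish the valuative criterion by reducing K-(semi)stability, defined via test configurations and Donaldson--Futaki invariants, to a statement about divisorial valuations on $X$. First I would recall the correspondence between normal test configurations $(\mathcal{X},\mathcal{L}) \to \A^1$ for $(X,-K_X)$ and finitely generated $\Z$-filtrations of the section ring $R(X,-K_X) = \bigoplus_m H^0(X,-mK_X)$, under which the generalised Donaldson--Futaki invariant $\mathrm{DF}(\mathcal{X},\mathcal{L})$ admits an intersection-theoretic formula on a suitable log resolution of the total space.

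For the direction $\beta_X(E) \geq 0$ for all $E$ implying K-semistability, I would follow Fujita and Li: the central fibre of a non-product test configuration decomposes as $\mathcal{X}_0 = \sum_i a_i F_i$, and each component $F_i$ not contracted by the natural rational map to $X$ induces a divisorial valuation $v_i$ on $K(X)$ proportional to $\ord_{F_i}$. A direct intersection-theoretic computation on a common log resolution expresses the DF invariant as
$$\mathrm{DF}(\mathcal{X},\mathcal{L}) \ \geq \ \sum_i \lambda_i\, \beta_X(E_{v_i}), \qquad \lambda_i > 0,$$
so $\beta \geq 0$ forces $\mathrm{DF} \geq 0$. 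Conversely, given a prime divisor $E$ over $X$, I would construct the test configuration associated to the filtration $\mathcal{F}^p H^0(X,-mK_X) := \{ s \st \ord_E(s) \geq p \}$ via a relative Proj of the Rees algebra, obtaining
$$\mathrm{DF}(\mathcal{X},\mathcal{L}) \ = \ c\cdot \beta_X(E), \qquad c > 0,$$
so that K-semistability forces $\beta_X(E) \geq 0$. The one technical subtlety here is finite generation of the associated graded algebra, which need not hold in general for a single $E$; this is resolved by a Fujita-type approximation argument, approximating the divisorial filtration by finitely generated ones and using continuity of both $\mathrm{DF}$ and $\beta$ in the appropriate norm.

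The hard part is the K-stability direction: strict positivity of $\beta_X(E)$ for every divisorial $E$ does not a priori prevent a sequence of non-product test configurations with $\mathrm{DF} \to 0^+$. To bridge this gap, I would invoke the theorem of Blum--Xu that K-stability is equivalent to reduced uniform K-stability, combined with Blum's theorem that the stability threshold $\delta(X) := \inf_E A_X(E)/S_X(E)$ is computed by a quasi-monomial valuation (in particular, the infimum is either attained or can be approximated by divisorial valuations with uniform control). The existence of such minimisers upgrades the pointwise inequality $\beta_X(E) > 0$ on divisors to the uniform inequality $\delta(X) > 1$, which by the Blum--Xu correspondence is equivalent to K-stability. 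This last step is where the full force of the valuation-theoretic machinery developed over the last decade is required, and is the main conceptual obstacle.
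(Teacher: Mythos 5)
The paper offers no proof of this statement: it is quoted as a known result with a pointer to \cite{Fuj19b,Li17,BX19}, so there is no internal argument to compare against, and the only question is whether your sketch faithfully reconstructs the literature proof. The semistability half does. Both directions you describe --- bounding $\mathrm{DF}$ of a normal test configuration from below by a positive combination of $\beta_X(E_{v_i})$ for the valuations induced by the components of the central fibre, and conversely building a test configuration from the filtration $\mathcal{F}^p H^0(X,-mK_X)=\{s : \ord_E(s)\geq p\}$ with a Fujita-approximation step to handle non-finitely-generated $E$ --- are the actual Fujita--Li arguments, modulo the usual caveat that the first inequality carries additional non-negative correction terms from components contracted to $X$ and from non-reducedness, which you should not sweep into the displayed sum.

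The K-stability half, however, has a genuine gap as you have set it up. First, a misattribution with consequences: reduced uniform K-stability is the criterion for K-\emph{poly}stability, not K-stability, and the equivalence of K-stability with uniform K-stability (i.e.\ with $\delta(X)>1$) is the finite-generation theorem of Liu--Xu--Zhuang \cite{LXZ22}, not a theorem of Blum--Xu. Second, and more seriously, ``$\beta_X(E)>0$ for every prime divisor $E$ over $X$'' does not formally imply $\delta(X)>1$ even granting that $\delta(X)=1$ is computed by a quasi-monomial valuation: if that minimiser is genuinely irrational, every divisorial $\beta$ could remain strictly positive while $X$ fails uniform K-stability, and ruling this scenario out is precisely the content of the finite generation theorem that your parenthetical ``can be approximated by divisorial valuations with uniform control'' quietly assumes. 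The standard route --- and the one behind the citation to \cite{BX19} --- avoids $\delta$ entirely and is much lighter: having established K-semistability from the first half, suppose $X$ is not K-stable; by the Li--Xu theory of special test configurations there is a non-trivial special test configuration with $\mathrm{DF}=0$ and integral central fibre $\mathcal{X}_0$, so $\ord_{\mathcal{X}_0}$ restricts on $K(X)$ to $c\cdot\ord_E$ for a prime divisor $E$ over $X$, and Fujita's formula for special test configurations gives $\mathrm{DF}=c'\,\beta_X(E)=0$ with $c'>0$, contradicting strict positivity. You should replace your third paragraph with this argument, or else explicitly invoke \cite{LXZ22} and justify why a quasi-monomial minimiser at $\delta=1$ forces a divisorial valuation with $\beta\leq 0$.
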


\begin{defn}\label{Q-Gor K-ss Fano family}
    A $\Q$\emph{-Fano family} is a morphism $f\colon \X\rightarrow B$ of schemes such that 
    \begin{enumerate}
        \item $f$ is projective and flat of pure relative dimension $n\in \N_{>0}$;
        \item the geometric fibres of $f$ are $\Q$-Fano varieties;
        \item $-K_{\X/B}$ is $\Q$-Cartier and $f$-ample;
        \item $f$ satisfies Koll{\'a}r’s condition (see, {\cite[p. 24]{Kol08}}).
    \end{enumerate}
    We call a $\Q${-Fano family} $f\colon \X\rightarrow B$ a K-semistable $\Q${-Fano family} if in addition to the above, the fibres $\X_b$ are K-semistable for all $b\in B$.
\end{defn}

We are now in a position to define the K-moduli functor/stack for log Fano pairs. The existence and properness of the K-moduli functor as an Artin stack are due to a number of recent contributions \cite{Jia18, CP21, BHLLX21, Xu20Tow,BLX19, XZ21, ABHLX20, LXZ22}.

We begin with a definition.

\begin{defn}\label{k-moduli stack def}
    Let $n$, $V$ be positive integers. The \emph{K-moduli stack of K-semistable Fano varieties of dimension $n$ and volume $V$} is the Artin stack $\M^K_{n,V}$ defined as the functor sending a reduced base $S$ to 
    \[
{\M}^K_{n,V}(S):=\left\{\X\rightarrow S\left| \begin{array}{l}\X\rightarrow S\textrm{ is a K-semistable $\Q$-Fano family, where}\\
\textrm{the fibres have dimension $n$ and volume $V$}\\ 
\end{array}\right.\right\}.
\]
\end{defn}

\begin{theorem}[{{K-moduli Theorem}}]\label{K-moduli theorem} 
Let $\chi$ be the Hilbert polynomial of an anti-canonically polarised  Fano variety. Consider the moduli pseudo-functor sending a reduced base $S$ to

\[
{\M}^K_{X}(S):=\left\{\X/S\left| \begin{array}{l}\X /S\textrm{ is a $\Q$-Fano family,}\\ \textrm{each fibre $\X_s$ is K-semistable, and }\\ \textrm{$\chi(\X_s,\OO_{\X_s}(-mK_{\X_s}))=\chi(m)$ for $m$ sufficiently divisible.}\end{array}\right.\right\}.
\]
Then there is a reduced Artin stack ${\M}^K$ of finite type over $\C$ representing this moduli pseudo-functor. The $\C$-points of ${\M}^K$ parameterise K-semistable $\Q$-Fano varieties $X$ with Hilbert polynomial $\chi(X,\OO_X(-mK_X))=\chi(m)$ for sufficiently divisible $m\gg 0$. Moreover, the stack ${\M}^K$ admits a good moduli space ${M}^K$, which is a reduced projective scheme of finite type over $\C$, whose $\C$-points parameterise K-polystable Fano varieties. 
\end{theorem}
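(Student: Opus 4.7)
The plan is to assemble the statement from six ingredients, each corresponding to a deep result from the references cited in the introduction. As this is a packaging of a large body of work, I would organise the proof as follows.

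First, establish \emph{boundedness}: the class of K-semistable $\Q$-Fano varieties of dimension $n$ with Hilbert polynomial $\chi$ forms a bounded family. The route here is via Birkar's BAB theorem together with Jiang's uniform $\alpha$-invariant bounds, which yields an integer $m=m(\chi)$ such that $-mK_X$ is very ample for all such $X$ and embeds them into a fixed projective space $\PP^N$. Second, establish \emph{local closedness and openness}: inside the Hilbert scheme $\mathrm{Hilb}^\chi(\PP^N)$, the locus $Z$ parametrising $\Q$-Fano varieties is locally closed, and the sublocus $Z^{\mathrm{Kss}}\subset Z$ where the fibre is K-semistable is open. This is the openness-of-K-semistability theorem of \cite{BLX19,Xu20}, which requires the deep constructibility of the $\delta$-invariant in families.

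Third, define $\M^K:=[Z^{\mathrm{Kss}}/\PGL(N+1)]$; standard stack-theoretic arguments then produce an Artin stack of finite type representing the claimed moduli pseudo-functor, where Koll\'ar's condition handles the passage from the embedded to the intrinsic moduli problem. Fourth, establish the existence of a \emph{good moduli space} $M^K$ in the sense of Alper. Here one verifies the two criteria of Alper--Halpern-Leistner--Heinloth, namely $\Theta$-reductivity and S-completeness for $\M^K$; these reduce to concrete statements about extending K-semistable families across a missing point or filling a punctured disc by a unique K-polystable central fibre, proved in \cite{ABHLX20,BHLLX21,LWX21} using optimal destabilisations and the $\delta$-invariant machinery. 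This simultaneously yields separatedness of $M^K$ and the bijection between closed points and K-polystable $\Q$-Fanos.

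Fifth, for \emph{properness} of $M^K$, one invokes the reduced K-stability reduction theorem of \cite{BX19,LWX21}: given a family over a punctured curve with K-semistable generic fibre, after finite base change there is a K-polystable extension, whose uniqueness is already guaranteed by S-completeness. Finally, \emph{projectivity} of $M^K$ is obtained by showing that the descended CM line bundle $\lambda_{\mathrm{CM}}$ on $M^K$ is ample, which is the main theorem of \cite{CP21,XZ20,LXZ22} and uses positivity of the CM line bundle together with the finite-generation results underlying the Higher Rank Finite Generation theorem. The principal obstacle is this last step together with S-completeness, since each rests on delicate convexity and valuative arguments (the existence of optimal destabilising valuations, Langton-type filling, and ampleness of $\lambda_{\mathrm{CM}}$), all requiring the full $\delta$-invariant toolkit rather than any elementary GIT substitute.
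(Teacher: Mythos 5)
Your outline is correct and coincides with how this theorem is actually established in the literature that the paper cites: the paper itself offers no proof, presenting the statement as a known combination of \cite{ABHLX20,BHLLX21,BLX19,BX19,CP21,Jia20,LWX21,LXZ22,Xu20,XZ20}, and your six-step assembly (boundedness via BAB and $\alpha$-invariant bounds, openness of K-semistability, the quotient-stack construction, $\Theta$-reductivity and S-completeness for the good moduli space, properness via K-semistable/polystable fillings, and projectivity via ampleness of the CM line bundle) is the standard decomposition. The only minor quibble is attribution in the properness step, where the existence of K-semistable fillings ultimately rests on \cite{BHLLX21,LXZ22} while \cite{BX19} supplies uniqueness of the K-polystable filling; this does not affect the correctness of the argument.
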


Here, pseudo-functor refers to a 2-functor which preserves composition and identities of 1-morphisms only up to coherent specified 2-isomorphism.

\subsection{Geometry of Fano threefolds in family \textnumero 3.3}\label{sec: fano geometry}

In this section, we review facts about the geometry of smooth Fano threefolds in family \textnumero3.3. For more details, we prompt the reader to \cite[\S 1]{CHELTSOV_FUJITA_KISHIMOTO_OKADA_2023}. Let $X$ be a smooth $(1,1,2)$ divisor in $\PP^1_{s,t}\times \PP^1_{u,v} \times \PP^2_{x,y,z}$ and let $\pi_i$ be the projections to each projective space for $i\in\{1,2,3\}$. Let $H_1=\pi_1^*(\OO_{\PP^1}(1))$, $H_2=\pi_2^*(\OO_{\PP^1}(1))$ and $H_3=\pi_1^*(\OO_{\PP^2}(1))$ be the natural line pullbacks of line bundles under each natural projection. We have $-K_X\sim H_1+H_2+H_3$, and $(-K_X)^3=18$. We also have a commutative diagram,

\[
\begin{tikzcd}
& \PP^1_{s,t} \times \PP^1_{u,v} \arrow[dr] \arrow[dl] & \\
\PP^1_{s,t} & & \PP^1_{u,v} \\
& X \arrow[uu, "\omega" near start]\arrow[ul, "\pi_1"'] \arrow[ur, "\pi_2"] \arrow[dl, "\phi_1"'] \arrow[dr, "\phi_2"] \arrow[dd, "\pi_3" near start] & \\
\PP^1_{s,t} \times \PP^2_{x,y,z}\arrow[uu]\arrow[dr] & & \PP^1_{u,v} \times \PP^2_{x,y,z} \arrow[uu]\arrow[dl]\\
& \PP^2_{x,y,z} &
\end{tikzcd}
\]

where:
\begin{enumerate}
    \item $\pi_1$ and $\pi_2$ are morphisms from $X$ to $\PP^1$ which are fibrations into quintic del Pezzo surfaces, induced by $|H_1|$ and $|H_2|$, respectively.
    \item $\pi_3$ is a conic bundle induced by $|H_3|$ whose discriminant curve is a smooth plane quartic curve.
    \item $\omega$ is a conic bundle induced by $|H_1+H_2|$ whose discriminant curve is a curve of degree $(3,3)$.
    \item $\phi_1$ and $\phi_2$ are blow ups of  $\PP^1\times \PP^2$ along a genus $3$ curve given as the complete intersection of two $(1,2)$ surfaces in $\PP^1\times \PP^2$. These are induced by $|H_1+H_3|$ and $|H_2+H_3|$ respectively, and admit exceptional divisors $E_1\sim H_1+2H_3-H_2$ and $E_2\sim H_2+2H_3-H_1$.
\end{enumerate}

Furthermore, it was shown in \cite{CHELTSOV_FUJITA_KISHIMOTO_OKADA_2023} and \cite[\S 5.12]{acc2021} that all smooth Fano threefolds in this family are K-stable. In addition, it is known that $|-K_X|$ is base point free and that the general member $S\in |-K_X|$ is a K3 surface. 

The following auxiliary result will be used in Section \ref{sec:birational models}.

\begin{prop}\label{coh for H3 smooth}
    For any K3 surface $S \in |-K_X|$, the restriction maps 
    \begin{equation*}
        \begin{split}
            H^0( X,\OO_X(H_i)) &\rightarrow H^0(S,\OO_S (H_i|_S )), \text{ for }i\in \{1,2,3\},\\
            H^0( X,\OO_X(H_i+H_j)) &\rightarrow H^0(S,\OO_S ((H_i+H_j)|_S ))\text{ for }i,j\in \{1,2,3\}, i\neq j,          
        \end{split}
    \end{equation*}
    are isomorphisms. 
\end{prop}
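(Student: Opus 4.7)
The plan is to combine two standard restriction sequences with a K\"unneth computation on the ambient product $Y=\PP^1\times\PP^1\times\PP^2$.

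For each of the six line bundles $L$ appearing in the statement, I would start with the short exact sequence
$$0\to L\otimes\mathcal{O}_X(K_X)\to L\to L|_S\to 0$$
coming from $S\sim -K_X$. The associated long exact sequence makes the desired isomorphism on global sections equivalent to the two vanishings
$$H^0(X,L\otimes\mathcal{O}_X(K_X))=H^1(X,L\otimes\mathcal{O}_X(K_X))=0.$$
Using $K_X\sim -H_1-H_2-H_3$, the twist $L\otimes\mathcal{O}_X(K_X)$ is $\mathcal{O}_X(H_i-H_1-H_2-H_3)$ in the first family and $\mathcal{O}_X(-H_k)$ with $\{i,j,k\}=\{1,2,3\}$ in the second.

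To compute these cohomologies on $X$, I would use the ideal-sheaf sequence
$$0\to\mathcal{O}_Y(-H_1-H_2-2H_3)\to\mathcal{O}_Y\to\mathcal{O}_X\to 0$$
of $X\subset Y$ as a divisor of class $H_1+H_2+2H_3$, tensored by the ambient extension of $L\otimes K_X$. The resulting long exact sequence reduces each vanishing on $X$ to a pair of vanishings on $Y$, and since every line bundle on $Y$ is a tensor product of pull-backs from the three factors, its cohomology decomposes as a K\"unneth tensor product of cohomologies of line bundles on $\PP^1$, $\PP^1$, and $\PP^2$. In each of the six cases one of the three tensor factors will be one of $H^*(\PP^1,\mathcal{O}(-1))$, $H^*(\PP^1,\mathcal{O}(-2))$, $H^*(\PP^2,\mathcal{O}(-1))$, or $H^*(\PP^2,\mathcal{O}(-3))$, each of which vanishes in every cohomological degree. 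The entire K\"unneth tensor product therefore vanishes, giving the required $H^0$ and $H^1$ vanishings on $X$ and hence the isomorphism.

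There is no genuine obstacle here: the computation is a short enumeration over the six line bundles, and the only care needed is tracking the indices $(i,j,k)$ so that the degenerate K\"unneth factor is identified correctly in each case. A single set of vanishings simultaneously handles both injectivity and surjectivity of the restriction map, so no separate argument is required; moreover the argument never uses smoothness of $S$, so it works for any member of $|-K_X|$.
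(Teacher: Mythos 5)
Your overall strategy is sound and reaches the right conclusion, but by a genuinely different route from the paper. The paper also starts from the sequence $0\to L\otimes\OO_X(K_X)\to L\to L|_S\to 0$, but then handles the vanishing of $H^0$ and $H^1$ of $L\otimes\OO_X(K_X)$ directly on $X$, by observing that the twist (e.g.\ $-H_1-H_2$ for $L=H_3$) is not effective and appealing to Kawamata--Viehweg. You instead push the computation to the ambient $Y=\PP^1\times\PP^1\times\PP^2$ via the ideal-sheaf sequence of the $(1,1,2)$ divisor and conclude by K\"unneth. Your route is more elementary and arguably more robust (the nef classes $H_3$ and $H_1+H_2$ are not big on $X$, so the vanishing on $X$ requires some care), at the cost of doubling the number of line bundles to inspect.

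However, your final enumeration contains a concrete error. You assert that $H^*(\PP^1,\OO(-2))$ and $H^*(\PP^2,\OO(-3))$ vanish in every degree; they do not, since by Serre duality $H^1(\PP^1,\OO(-2))\cong\C$ and $H^2(\PP^2,\OO(-3))\cong\C$. Moreover, in the one case where neither $\PP^1$ factor is acyclic, namely $L=H_3$ with $L+K_X-X=\OO_Y(-2,-2,-2)$, the factor that saves you is $H^*(\PP^2,\OO(-2))$, which is absent from your list. The correct statement is that in each of the twelve ambient bundles at least one K\"unneth factor is one of $\OO_{\PP^1}(-1)$, $\OO_{\PP^2}(-1)$, or $\OO_{\PP^2}(-2)$, each of which is acyclic in all degrees; with that correction the argument closes. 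Your final remark that smoothness of $S$ is never used is accurate and matches the paper's hypothesis, which only requires $S\in|-K_X|$ to be a K3 surface.
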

\begin{proof}
    We show the isomorphism only for $H_3$ as the rest of the cases are identical. There exists a short exact sequence
    \[
    0 \to \OO_{X}(H_3- S) \to \OO_{X}(H_3) \to \OO_{S}(H_3|_{S})\to 0.
    \]
    Notice that $H_3-S\sim -H_1-H_2$, which is not effective. Thus by the Kawamata--Viehweg vanishing theorem, we have that
    $$h^0(X,\OO_{X}({-H_1-H_2}))= h^1(X,\OO_{X}({-H_1-H_2}))=0,$$
    and the desired isomorphism follows immediately after taking the associated long exact sequence on cohomology. In addition,
    $$h^0(X,\OO_{X}(H_3)) = h^0(X,\OO_{S}({H_3|S})) = \frac{1}{2}(H_3|S)^2+2=3.$$
\end{proof}

\subsection{K3 surfaces}

The following information can be largely found in \cite[\S 2.1 and 3.1]{LZ24}. We append them here for the reader's convenience.

\begin{defn}
A \emph{K3 surface} is a normal projective surface $X$ with at worst ADE singularities satisfying $\omega_X\simeq \OO_X$ and $H^1(X, \OO_X) =0$. A polarization (resp. quasi-polarization) on a K3 surface $X$ is an ample (resp. big and nef) line bundle $L$ on $X$. We call the pair $(X,L)$ a \emph{ polarised} (resp. \emph{quasi- polarised}) \emph{K3 surface of degree $d$}, where $d=(L^2)$. Since $d$ is always an even integer, we sometimes write $d = 2k$.
\end{defn}

Let $(S,L)$ be a  polarised K3 surface. Then there are three cases based on the behavior of the linear system $|L|$.

\begin{theorem}[cf. \cite{May72, SD}]\label{Mayer}
Let $(S,L)$ be a  polarised K3 surface of degree $2k$. Then one of the followings holds.
\begin{enumerate}
    \item \textup{(Generic case)} The linear series $|L|$ is very ample, and the embedding $\phi_{|L|}:S\hookrightarrow |L|^{\vee}$ realises $S$ as a degree $2k$ surface in $\PP^{k+1}$. In this case, a general member of $|L|$ is a smooth non-hyperelliptic curve.
    \item \textup{(Hyperelliptic case)} The linear series $|L|$ is base-point-free, and the induced morphism $\phi_{|L|}$ realises $X$ as a double cover of a normal surface of degree $k$ in $\PP^{k+1}$. In this case, a general member of $|L|$ is a smooth hyperelliptic curve, and $|2L|$ is very ample.
    \item \textup{(Unigonal case)}  The linear series $|L|$ has a base component $E$, which is a smooth rational curve. The linear series $|L-E|$ defines a morphism $S\rightarrow \PP^{k+1}$ whose image is a rational normal curve in $\PP^{k+1}$. In this case, a general member of $|L-E|$ is a union of disjoint elliptic curves, and $|2L|$ is base-point-free.
\end{enumerate}
\end{theorem}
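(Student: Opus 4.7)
The plan is to run a case analysis on the behaviour of the base locus of $|L|$, in the spirit of Saint-Donat. Passing to the minimal resolution (which is a smooth K3 since the ADE singularities are crepant) and pulling $L$ back to a big and nef line bundle of the same self-intersection, one may assume $S$ is smooth. First I would establish the numerical backbone via Riemann--Roch: since $\chi(\OO_S)=2$ and $L^2=2k$, we have $\chi(L)=k+2$, and Kawamata--Viehweg vanishing gives $h^i(S,L)=0$ for $i>0$, so $h^0(S,L)=k+2$ and $\phi_{|L|}$ targets $\PP^{k+1}$. Restricting to a general irreducible member $C\in|L|$, which has arithmetic genus $k+1$ by adjunction, the residue sequence identifies $L|_C$ with $\omega_C$, the fact driving the trichotomy.

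Next, I would invoke Saint-Donat's analysis of base loci on a K3. If $|L|$ admits a fixed component $F$, intersection-theoretic constraints on a K3 force $F$ to be a union of $(-2)$-curves; combined with the irreducibility of a general member of the movable part $|L-F|$ and the Hodge index theorem applied to $(L,F)$, one deduces that $F$ is a single smooth rational curve $E$, that $(L-E)^2=0$, and that $|L-E|$ is a base-point-free pencil whose general member is a disjoint union of elliptic curves. This places us in case (3), and $\phi_{|L-E|}$ factors through a $\PP^1$ embedded as a rational normal curve of degree $k$ in $\PP^{k+1}$. If instead $|L|$ is base-point-free, let $d=\deg \phi_{|L|}(S)$ and $m=\deg \phi_{|L|}$, so that $dm=2k$. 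Since the image is a nondegenerate surface in $\PP^{k+1}$, the minimal-degree bound for nondegenerate projective surfaces gives $d\geq k$, forcing $(m,d)\in\{(1,2k),(2,k)\}$. The dichotomy is governed by whether a general smooth $C\in|L|$ is hyperelliptic: in the non-hyperelliptic case, the canonical embedding of $C$ combined with cohomology-and-base-change upgrades $\phi_{|L|}$ to a closed embedding (case (1)); in the hyperelliptic case, the $g^1_2$ on $C$ spreads out to an elliptic pencil on $S$, and $\phi_{|L|}$ factors as a double cover of a surface of degree $k$ (case (2)).

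The main technical obstacle lies in controlling the fixed part in the unigonal case: ruling out longer $(-2)$-chains, and verifying that the residual pencil $|L-E|$ is genuinely elliptic with smooth general fibre, requires Saint-Donat's lemma that any base point of $|L-F|$ would force a numerical inequality contradicting $L^2=2k$ together with $E^2=-2$. A secondary subtlety is establishing base-point-freeness of $|2L|$ in the unigonal case and very-ampleness of $|2L|$ in the hyperelliptic case; for these I would appeal to Reider's theorem, exploiting that $(2L)^2\geq 8k\geq 16$ and excluding the relevant Bogomolov-unstable configurations by intersecting with $E$ or with an elliptic fibre. The hyperelliptic-versus-generic dichotomy itself reduces to Saint-Donat's criterion that $C\in|L|$ is hyperelliptic if and only if $S$ carries an elliptic fibration meeting $C$ in a $g^1_2$, which is a clean numerical check on the N\'eron--Severi lattice of $S$.
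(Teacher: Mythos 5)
The paper offers no proof of this statement: it is quoted as a classical theorem of Mayer and Saint-Donat, with only the citations \cite{May72, SD} standing in for an argument. Your sketch reproduces exactly the standard Saint-Donat proof that those references contain (reduce to the smooth case via the crepant resolution, compute $h^0(L)=k+2$ by Riemann--Roch and vanishing, identify $L|_C$ with $\omega_C$ by adjunction, split on whether $|L|$ has a fixed component, and use the minimal-degree bound for nondegenerate surfaces to force $\deg\phi_{|L|}\in\{1,2\}$ in the base-point-free case), so in substance you are giving the proof the paper delegates to the literature. Two small corrections to your unigonal case: the decomposition there is $L\sim (k+1)E'+\Gamma$ with $E'$ an elliptic fibre class and $\Gamma$ the fixed $(-2)$-curve, so $|L-\Gamma|=|(k+1)E'|$ is a $(k+1)$-dimensional linear system composed with the elliptic pencil $|E'|$, not itself a pencil, and its image in $\PP^{k+1}$ is a rational normal curve of degree $k+1$, not $k$. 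Also, in the hyperelliptic case the $g^1_2$ need not always come from an elliptic pencil with $E\cdot L=2$; Saint-Donat's second subcase $L\sim 2B$ with $B^2=2$ (double cover of the Veronese surface) should be mentioned, though it does not affect the statement being proved.
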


\subsubsection{Moduli of K3 surfaces}\label{k33}

A lot of the information of moduli of K3 surfaces has appeared in \cite[\S 3]{LZ24}. We include some information here for continuity.

\begin{defn}
    Let $L_{K3}:=\U^{\oplus3}\oplus \mathbb{E}_8^{\oplus2}$ be a fixed (unique) even unimodular lattice of signature $(3,19)$.
\end{defn}

Let $\Lambda$ be a rank $r$ primitive sublattice of $L_{K3}$ with signature $(1,r-1)$. A vector $h\in \Lambda\otimes \R$ is called \emph{very irrational} if $h\notin \Lambda'\otimes \R$ for any primitive proper sublattice $\Lambda'\subsetneq \Lambda$. Fix a very irrational vector $h$ with $(h^2)>0$.

\begin{defn}
    A \emph{$\Lambda$- polarised K3 surface} (resp. a \emph{$\Lambda$-quasi- polarised K3 surface}) $(X,j)$ is a K3 surface $X$ with ADE singularities (resp. a smooth projective surface $X$) together with a primitive lattice embedding $j:\Lambda\hookrightarrow\Pic(X)$ such that $j(h)\in \Pic(X)_{\R}$ is ample (resp. big and nef).
    \begin{enumerate}
        \item Two such pairs $(X_1,j_1)$ and $(X_2,j_2)$ are called \emph{isomorphic} if there is an isomorphism $f:X_1\stackrel{\simeq}{\rightarrow} X_2$ of K3 surfaces such that $j_1=f^{*}\circ j_2$.
        \item The \emph{$\Lambda$-(quasi-)  polarised period domain} is $$\mathbb{D}_{\Lambda}:=\PP\{w\in {\Lambda}^{\perp}\otimes\C:(w^2)=0,\ (w.\overline{w})>0\}.$$
    \end{enumerate}
    When $r=1$, i.e. $\Lambda$ is of rank one, it is convenient to choose $h$ to be the effective generator $L$ of $\Lambda$. We denote by $d$ the self-intersection of $L$, and we call $(X,L)$ a (quasi-) polarised K3 surface of degree $d$.
\end{defn}

\begin{defn}
   For a fixed lattice $\Lambda$ with a very irrational vector $h$, one define the \emph{moduli functor $\cF_{\Lambda}$ of $\Lambda$- polarised K3 surfaces} to send a base scheme $T$ to 
\[
\left\{(f:\cX\rightarrow T;\varphi)\left| \begin{array}{l} \cX\to T\textrm{ is a proper flat morphism, each geometric fibre}\\ \textrm{$\cX_{\bar{t}}$ is an ADE K3 surface, and $\varphi:\Lambda\longrightarrow\Pic_{\cX/T}(T)$ is }\\ \textrm{a group homomorphism such that the induced map }\\ \textrm{$\varphi_{\bar{t}}:\Lambda\rightarrow \Pic(\cX_{\bar{t}})$ is an isometric primitive embedding of}\\ \textrm{lattices and that $\varphi_{\bar{t}}(h)\in \Pic(\cX_{\bar{t}})_{\R}$ is an ample class.} \end{array}\right.\right\}.
\]

\end{defn}

\begin{theorem}[cf. \cite{Dol96, AE23}]\label{isommoduli}
    The moduli functor of $\Lambda$- polarised K3 surfaces is represented by a smooth separated Deligne-Mumford (DM) stack $\cF_{\Lambda}$. Moreover, $\cF_{\Lambda}$ admits a coarse moduli space $F_{\Lambda}$, whose analytification is isomorphic to $\mathbb{D}_{\Lambda}/\Gamma$, where $\Gamma:=\{\gamma\in \mathrm{O}(L_{K3}):\gamma|_{\Lambda}=\Id_{\Lambda} \}$.
\end{theorem}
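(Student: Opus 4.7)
The plan is to combine classical K3 surface theory with standard stack-theoretic foundations, following the strategy of Dolgachev \cite{Dol96} for the analytic description and Alexeev--Engel \cite{AE23} for the full algebraic treatment.

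First, I would establish that $\cF_\Lambda$ is a Deligne--Mumford stack. The key inputs are finiteness of automorphisms and existence as an algebraic stack of finite type. Finiteness of automorphisms follows from the fact that for any $\Lambda$-polarised K3 surface $(X,j)$, the class $j(h)$ is ample, so $\Aut(X, j(h))$ is algebraic of finite type by Matsusaka--Mumford; moreover, K3 surfaces admit no nontrivial vector fields since $H^0(X, T_X) \cong H^2(X, \Omega_X^1)^\vee = 0$ by Serre duality and Hodge theory, so $\Aut(X, j(h))$ is zero-dimensional. Existence as an algebraic stack is standard via Hilbert schemes and boundedness: $h$-polarised K3 surfaces form a bounded family, and the $\Lambda$-polarisation is an additional finite-type condition cut out from the relative Picard scheme.

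Next, smoothness of $\cF_\Lambda$ follows from the unobstructedness of K3 deformations (a classical consequence of $H^0(X, T_X)=0$, or a special case of Bogomolov--Tian--Todorov). The $\Lambda$-polarisation condition imposes exactly $r = \rank(\Lambda)$ independent conditions on the Kuranishi space, cutting out a smooth subspace of dimension $20 - r$, which coincides with $\dim \mathbb{D}_\Lambda$. Separatedness is verified via the valuative criterion: given a DVR $R$ with fraction field $K$, two $\Lambda$-polarised families $\cX_1, \cX_2 \to \Spec R$ with isomorphic generic fibres must be isomorphic as $\Lambda$-polarised families, which again follows from Matsusaka--Mumford applied to the relative ample line bundle $j(h)$.

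Finally, for the identification of $F_\Lambda^{\mathrm{an}}$ with $\mathbb{D}_\Lambda / \Gamma$, I would construct the period map $\cF_\Lambda^{\mathrm{an}} \to [\mathbb{D}_\Lambda / \Gamma]$ from the variation of Hodge structures attached to a family, marking the Hodge lattice by $L_{K3}$ up to $\Lambda$-preserving isometries, so that the $\Lambda$-transverse cohomology defines a period point in $\mathbb{D}_\Lambda$. The Global Torelli theorem for K3 surfaces (Piatetski-Shapiro--Shafarevich, refined by Burns--Rapoport and extended to ADE-singular K3 surfaces via simultaneous resolutions) shows this map is injective on isomorphism classes, while surjectivity (Kulikov--Persson--Pinkham, Todorov) provides the inverse. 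The main obstacle is precisely this last step: rigorously treating the period map as a morphism of analytic stacks and matching its image with $\mathbb{D}_\Lambda / \Gamma$. The very irrationality of $h$ is essential here, ensuring that no proper sublattice of $\Lambda$ arises as the primitive Picard lattice of a generic fibre, so the $\Lambda$-polarisation $j$ is uniquely recovered from the periods and no ambiguity in the choice of lifting appears.
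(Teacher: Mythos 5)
The paper does not prove this statement: it is quoted as a known result with the citation ``cf.\ \cite{Dol96, AE23}'', so there is no in-paper argument to compare against. Your outline is a faithful summary of the standard proof in those references: DM-ness from finiteness of automorphisms ($H^0(X,T_X)=0$ plus Matsusaka--Mumford) and boundedness via Hilbert schemes, smoothness from unobstructedness of K3 deformations with the $\Lambda$-marking cutting out a smooth $(20-r)$-dimensional locus, separatedness by the valuative criterion, and the identification of the coarse space with $\mathbb{D}_\Lambda/\Gamma$ via the period map, Global Torelli, and surjectivity, with ADE surfaces handled by simultaneous resolution. The one place where your phrasing slightly misattributes the mechanism is the role of the very irrational vector $h$: its purpose is not to recover $j$ from the periods (that is Torelli), but to fix a chamber of the positive cone away from all $(-2)$-walls, which makes the ampleness condition on $j(h)$ well defined in families and is what guarantees separatedness and that the period map hits all of $\mathbb{D}_\Lambda/\Gamma$ once ADE members are included. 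You correctly flag that the genuinely technical step --- treating the period map as a morphism of stacks and matching its image --- is the content of \cite{AE23}; at the level of detail appropriate for a cited theorem, your sketch is sound.
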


\begin{remark}
    \textup{When $\Lambda$ is of rank one, we denote by $d$ the self-intersection of a generator of $\Lambda$, and by $\cF_d$ (resp. $F_d$) the corresponding moduli stack (resp. coarse moduli space).}
\end{remark}

\subsubsection{K3 surfaces in anticanonical linear series}
Let $\Lambda_0$ be a rank $3$ hyperbolic sublattice with generators 
$$
H_1,\qquad H_2,\qquad H_3
$$
satisfying the intersection numbers
$$
(H_1^2)=0,\quad (H_2^2)=0,\quad (H_3^2)=2,\quad 
(H_1\cdot H_2)=2,\quad (H_1\cdot H_3)=3,\quad (H_2\cdot H_3)=3.
$$
The Gram matrix of $\Lambda_0$ in the ordered basis $(H_1,H_2,H_3)$ is
$$
G_{\Lambda_0}=\begin{pmatrix}
0 & 2 & 3\\
2 & 0 & 3\\
3 & 3 & 2
\end{pmatrix},
$$
which has signature $(1,2)$. Thus $\Lambda_0$ is an even hyperbolic lattice.

Let $J:\Lambda_0\hookrightarrow L_{K3}$ be a primitive embedding. Let $\cF_{\Lambda_1}$ (resp. $\cF_{\Lambda_2}$, resp. $\cF_{\Lambda_3}$) be the moduli space of $\Lambda_0$-polarised K3 surfaces with respect to the very irrational vectors
$$
h_1:=H_1+H_2+(1+\epsilon)H_3,\qquad
h_2:=2(H_1+H_2)+\epsilon H_3,\qquad
h_3:=\epsilon(H_1+H_2)+H_3,
$$
respectively, where $0<\epsilon\ll1$ is an irrational number. By \cite[Section~2 of arXiv version~1]{AE23}, for each $i\in \{1,2,3\}$ the universal K3 surface over the moduli stack $\cF_{\Lambda_i}$ is independent of the choice of~$\epsilon$. We prove that these universal families are all isomorphic.

\begin{lemma}\label{samefamily}
    The moduli stacks $\cF_{\Lambda_1},\cF_{\Lambda_2},\cF_{\Lambda_3}$ are pairwise isomorphic. Moreover, their universal K3 surfaces are isomorphic. 
\end{lemma}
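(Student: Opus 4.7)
The plan is to reduce the claim to the chamber-theoretic statement that $h_1, h_2, h_3$ all lie in the same connected component of the open cone
\[
\mathcal{C}(\Lambda_0)\ \setminus\ \bigcup_{\substack{\delta\in\Lambda_0 \\ \delta^2=-2}} \delta^{\perp},
\]
where $\mathcal{C}(\Lambda_0)\subset \Lambda_0\otimes \R$ denotes the positive cone. By the standard chamber theory for $\Lambda$-polarised K3 surfaces (see \cite{Dol96, AE23}), two very irrational vectors lying in the same such chamber define the same moduli functor, and in particular carry the same universal K3 family; granting this, the lemma follows at once.

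For the chamber condition, I would write every candidate vector in the form $h=\alpha(H_1+H_2)+\beta H_3$, so that $h_1, h_2, h_3$ correspond to $(\alpha,\beta)=(1,\,1+\epsilon),\,(2,\,\epsilon),\,(\epsilon,\,1)$ respectively, all with $\alpha,\beta>0$. Using the given Gram matrix, a direct computation yields
\[
h\cdot\delta\ =\ u\alpha + v\beta,\qquad u:=2\bigl((a+b)+3c\bigr),\qquad v:=3(a+b)+2c,
\]
for $\delta=aH_1+bH_2+cH_3\in\Lambda_0$. Since each $(\alpha,\beta)$ lies in the open first quadrant, the signs $\textup{sign}(h_i\cdot\delta)$ coincide across $i\in\{1,2,3\}$ if and only if $uv\geq 0$. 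The problem thus reduces to the arithmetic statement that no $(-2)$-root in $\Lambda_0$ satisfies $uv<0$.

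This arithmetic statement is the only real obstacle, and I would prove it by contradiction. After replacing $\delta$ by $-\delta$ if necessary one may suppose $u>0>v$, which, setting $s:=a+b$ and $x:=s+3c$, forces $c>0$ and $0<x<7c/3$. Expanding $\delta^2=-2$ in the given basis and using $(a-b)^2=s^2-4ab$ gives the identity $(a-b)^2 = x^2 - 7c^2 + 2$, so $x^2\geq 7c^2-2$. Combined with $x^2<49c^2/9$ this forces $14c^2<18$, hence $c=1$; a direct check of the two remaining integer possibilities $x\in\{1,2\}$ (neither satisfying $x^2\geq 5$) completes the contradiction. Thus $h_1, h_2, h_3$ lie in a common chamber, and the lemma follows from the principle recalled at the outset, with the universal K3 surfaces being literally identified rather than merely isomorphic.
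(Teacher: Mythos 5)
Your proof is correct, and the Diophantine computation checks out: with $\delta=aH_1+bH_2+cH_3$ the Gram matrix indeed gives $h\cdot\delta=u\alpha+v\beta$ with $u=2((a+b)+3c)$ and $v=3(a+b)+2c$; the identity $(a-b)^2=x^2-7c^2+2$ (for $x=(a+b)+3c$) follows from $\delta^2=2c^2+4ab+6(a+b)c=-2$, and together with $0<x<7c/3$ it forces $14c^2<18$, hence $c=1$ and then the contradiction you describe. Your route is, however, genuinely different from the paper's. The paper does not try to show that $h_1,h_2,h_3$ lie in a common chamber; instead it uses that the Weyl group $W(\Lambda_0)$ generated by $(-2)$-reflections acts simply transitively on the chambers of the positive cone, takes the unique $w\in W$ with $w(Ch(h_i))=Ch(h_j)$, and defines the isomorphism $\cF_{\Lambda_i}\to\cF_{\Lambda_j}$ by precomposing the marking with $w^{-1}$, justifying the isomorphism of universal families by the remark that these reflections are realised by monodromy. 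Your argument proves the stronger statement that this $w$ is the identity: all three vectors sit in one chamber, so the three functors literally coincide and no twisting or monodromy argument is needed. The trade-off is that the paper's argument is lattice-independent boilerplate that would survive even if the $h_i$ were separated by walls, whereas yours requires the explicit arithmetic (which happens to succeed for $\Lambda_0$) but yields a cleaner conclusion. Note that both proofs rest on the same external input from \cite{Dol96, AE23}, namely that the moduli functor of $\Lambda_0$-polarised K3 surfaces depends only on the chamber of the very irrational vector $h$ (equivalently, that a marking sends the entire chamber of $h$ into the relative ample cone), so on that point neither argument is more self-contained than the other.
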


\begin{proof}
Fix a primitive embedding $\Lambda_0\hookrightarrow L_{K3}$ and a connected component $C^+(\Lambda_0)$ of the positive cone of $\Lambda_0\otimes\mathbb R$. Let
$$
R=\{\delta\in \Lambda_0 \mid \delta^2=-2\}
$$
and let $W=W(\Lambda_0)$ be the subgroup of $O(\Lambda_0)$ generated by reflections $r_\delta(x)=x+(x\cdot\delta)\delta$ for $\delta\in R$. The hyperplanes $\delta^\perp$ with $\delta\in R$ cut $C^+(\Lambda_0)$ into chambers, and $W$ acts simply transitively on the set of chambers.

By construction, $h_i$ ($i=1,2,3$) are \emph{very irrational}: they avoid all walls $\delta^\perp$ (for $\epsilon$ irrational and sufficiently small), hence each determines a chamber $Ch(h_i)\subset C^+(\Lambda_0)$. For any $i,j\in\{1,2,3\}$ there exists a unique $w\in W$ with $w(Ch(h_i))=Ch(h_j)$.

Given $(\pi:\mathcal S\to T,\ \iota:\Lambda_0\hookrightarrow \Pic(\mathcal S/T))$ in $\cF_{\Lambda_i}$, define
$$
\Phi_w(\mathcal S/T,\iota)\ :=\ (\mathcal S/T,\ \iota\circ w^{-1}).
$$
This is again a primitive marking, and $(\iota\circ w^{-1})(h_j)$ is $\pi$-ample because $w^{-1}(h_j)\in Ch(h_i)$ and $\iota$ sends $Ch(h_i)$ into the relative ample cone. Thus $\Phi_w$ is a functor $\cF_{\Lambda_i}\to \cF_{\Lambda_j}$, with inverse given by $\Phi_{w^{-1}}$. Hence $\cF_{\Lambda_i}\cong \cF_{\Lambda_j}$ for all $i,j$.

Since these identifications are realised by monodromy (Picard–Lefschetz reflections), they respect families, and therefore the universal K3 surfaces over $\cF_{\Lambda_1},\cF_{\Lambda_2},\cF_{\Lambda_3}$ are isomorphic as families.
\end{proof}

\begin{lemma}\label{lem:no_unigonal}
Let $\Lambda_0=\langle H_1,H_2,H_3\rangle\subset\NS(S)$ be the rank-$3$ lattice with
$$
(H_1^2)=(H_2^2)=0,\quad (H_3^2)=2,\quad
(H_1 \cdot H_2)=2,\quad (H_1 \cdot H_3)=(H_2 \cdot H_3)=3,
$$
and let $L:=H_1+H_2+H_3$ (so $L^2=18$). Then the polarised K3 surface $(S,L)$ is \emph{not} unigonal.
\end{lemma}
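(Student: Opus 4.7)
The approach is via the standard Saint--Donat refinement of Theorem~\ref{Mayer}(3): a big and nef line bundle $L$ on a K3 surface $S$ has $|L|$ with a fixed component (equivalently, $(S,L)$ is unigonal) if and only if there exists a class $F\in\NS(S)$ with $F^2=0$ and $F\cdot L=1$; such an $F$ is then automatically represented by an elliptic pencil (by Riemann--Roch, using that $-F$ is not effective since $L$ is ample). The plan is to suppose that such an $F$ exists and rule this out in two cases, according to whether $F$ lies in $\Lambda_0$ or not.

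\emph{Case 1: $F\in\Lambda_0$.} Write $F=aH_1+bH_2+cH_3$. The equation $F\cdot L=5a+5b+8c=1$ forces $c\equiv 2\pmod 5$, so set $c=2+5k$ and $a+b=-3-8k$. Substituting into $F^2=2c^2+4ab+6c(a+b)=0$ yields
\[
(a+b)^2-4ab = -126k^2-98k-19.
\]
For integer $(a,b)$ to exist, the right-hand side must be a nonnegative square, which forces $126k^2+98k+19\le 0$. The discriminant of this quadratic in $k$ is $98^2-4\cdot 126\cdot 19=28$, so its roots $(-49\pm\sqrt 7)/126$ lie in the interval $(-0.41,-0.37)$, which contains no integer. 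Hence no such $F$ lives in $\Lambda_0$.

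\emph{Case 2: $F\notin\Lambda_0$.} Since $(S,L)$ is $\Lambda_0$-polarised, the embedding $\Lambda_0\hookrightarrow\NS(S)$ is primitive, so $F$ is $\Q$-linearly independent from $\Lambda_0$ and $M:=\Lambda_0+\Z F\subset\NS(S)$ has rank $4$. Set $a:=F\cdot H_1$, $b:=F\cdot H_2$, $c:=F\cdot H_3=1-a-b$. Cofactor expansion of the $4\times 4$ Gram matrix of the basis $(H_1,H_2,H_3,F)$,
\[
G_M=\begin{pmatrix}0&2&3&a\\2&0&3&b\\3&3&2&c\\a&b&c&0\end{pmatrix},
\]
gives
\[
\det G_M = 25a^2+25b^2+22ab+4-20a-20b = 7(a-b)^2+18\bigl(a+b-\tfrac{5}{9}\bigr)^2-\tfrac{14}{9}.
\]
Over integer $(a,b)$, the minimum is attained at $(0,0)$ with value $4$, so $\det G_M\ge 4>0$ in all cases. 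On the other hand, $L\in M$ satisfies $L^2=18>0$, so $M$ contains a positive class; since $\NS(S)$ has signature $(1,\rho-1)$ and $M$ has rank $4$, the signature of $M$ must be $(1,3)$, which forces $\det G_M<0$. This contradicts the strict positivity computed above, ruling out Case 2.

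The main obstacle is Case~2: a purely Diophantine analysis inside $\Lambda_0$ is insufficient because the Picard lattice of $S$ may strictly contain $\Lambda_0$. The key point is that, whichever integer triple $(F\cdot H_1,F\cdot H_2,F\cdot H_3)$ summing to $1$ one considers, the resulting rank-$4$ overlattice has Gram determinant of the wrong sign to embed as a signature-$(1,3)$ sublattice of the Picard lattice --- an observation that ultimately comes down to the fact that the quadratic $7u^2+18(v-5/9)^2-14/9$ takes values $\ge 4$ on $\Z^2$.
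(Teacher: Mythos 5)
Your proof is correct, and its first case is essentially the paper's argument in a different parametrisation: the paper sets $s=a+b$, solves $8c=1-5s$, and shows the discriminant $(a-b)^2=-(63s^2-14s-1)/32$ is negative for $s\equiv 5\pmod 8$; your substitution $c=2+5k$, $a+b=-3-8k$ gives $(a-b)^2=-126k^2-98k-19$, which is the same quantity (substitute $s=-3-8k$), so the two Diophantine checks coincide. Where you genuinely go beyond the paper is Case 2. The paper's proof writes $F=aH_1+bH_2+cH_3$ with integer coefficients and concludes from the nonexistence of such $F$ \emph{in $\Lambda_0$} that $(S,L)$ is not unigonal; but the lemma only assumes $\Lambda_0\subset\NS(S)$, and the unigonal class $F$ a priori need not lie in $\Lambda_0$ (this matters for the paper's later applications, where the relevant K3 surfaces can have Picard lattice strictly larger than $\Lambda_0$). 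Your determinant-and-signature argument closes this gap cleanly: primitivity of $\Lambda_0\hookrightarrow\NS(S)$ forces $\Lambda_0+\Z F$ to have rank $4$, the Gram determinant $25a^2+25b^2+22ab-20a-20b+4=7(a-b)^2+18(a+b-\tfrac59)^2-\tfrac{14}{9}$ is at least $\tfrac{32}{9}-\tfrac{14}{9}=2>0$ on $\Z^2$ (indeed $\ge 4$), whereas a rank-$4$ sublattice of a hyperbolic lattice containing the class $L$ with $L^2=18>0$ must have Gram determinant $\le 0$. Both computations check out; your version is the one I would keep.
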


\begin{proof}
Recall that $(S,L)$ is unigonal if and only if there exists a primitive isotropic class
$0\ne F\in\NS(S)$ with $F^2=0$ and $L \cdot F=1$ (equivalently, an elliptic pencil $\lvert F\rvert$ with $L$-degree $1$).

Write $F=a\,H_1+b\,H_2+c\,H_3$ with $a,b,c\in\Z$. Using the intersection numbers above, we have
$$
F^2 \;=\; 2c^2+4ab+6ac+6bc \;=\; 0,
$$
and
$$
L \cdot F
= (H_1+H_2+H_3) \cdot (a H_1+b H_2+c H_3)
= 5(a+b)+8c.
$$
Assume for contradiction that $L \cdot F=1$. Then $8c=1-5(a+b)$, so letting $s:=a+b$ we get
$c=\frac{1-5s}{8}\in\Z$ and necessarily $s\equiv 5\pmod 8$.

Set $t:=ab$. Dividing $F^2=0$ by $2$ and substituting $c$ gives
$$
c^2+3cs+2t=0
\quad\Longrightarrow\quad
t=\frac{95s^2-14s-1}{128}\in\Z.
$$
Thus $a,b$ are (integral) roots of $x^2-sx+t=0$, whose discriminant is
$$
\Delta=s^2-4t
= -\,\frac{63s^2-14s-1}{32}.
$$
But for any integer $s\equiv 5\pmod 8$ we have $63s^2-14s-1>0$, hence $\Delta<0$, a contradiction.
Therefore no class $F$ with $F^2=0$ and $L \cdot F=1$ exists in $\Lambda_0$; in particular, $(S,L)$ is not unigonal.
\end{proof}

\begin{defn}
A \emph{$\Lambda_0$–polarised} K3 surface of degree $18$ is a smooth K3 $(S,L)$ with a primitive embedding 
$$
\Lambda_0=\langle H_1,H_2,H_3\rangle \hookrightarrow \NS(S),$$
with intersection numbers$$
(H_1^2)=(H_2^2)=0,\ (H_3^2)=2,\ (H_1H_2)=2,\ (H_1H_3)=(H_2H_3)=3,
$$
and polarization $L:=H_1+H_2+H_3$ (so $L^2=18$). We call such $(S,L)$ \emph{of type $\Lambda_0$}.
\end{defn}

\begin{lemma}\label{lem:no_unigonal_deg18}
Let $(S,L)$ be a \emph{unigonal} polarised K3 surface of degree $18$. Then $(S,L)$ is \emph{not} a degeneration of a family of $\Lambda_0$–polarised K3 surfaces of degree $18$.
\end{lemma}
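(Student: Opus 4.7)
The plan is to derive a contradiction from the signature of $\NS(S)$. First I would use that $\cF_{\Lambda_0}$ is a separated Deligne--Mumford stack (Theorem \ref{isommoduli}) to reduce the degeneration question to membership in $\cF_{\Lambda_0}$: a supposed degeneration $(S,L)$ would itself be a point of this stack, yielding a primitive lattice embedding $\iota\colon\Lambda_0\hookrightarrow\NS(S)$ with $\iota(H_1+H_2+H_3)=L$.

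I would next extract from the unigonal structure a primitive isotropic class $F\in\NS(S)$ with $F^2=0$ and $L\cdot F=1$; primitivity of $F$ is automatic from $L\cdot F=1$. By Lemma \ref{lem:no_unigonal}, $F$ cannot lie in $\iota(\Lambda_0)$, so $M:=\iota(\Lambda_0)+\Z F$ is a genuinely rank-$4$ sublattice of $\NS(S)$. Setting $a_i:=F\cdot H_i\in\Z$, the constraint $L\cdot F=1$ becomes $a_1+a_2+a_3=1$, which together with the intersection numbers of $\Lambda_0$ determines the Gram matrix of $M$ in the basis $(H_1,H_2,H_3,F)$.

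The technical heart is then a determinant calculation. Cofactor-expanding along the row of $F$ yields a homogeneous quadratic polynomial in $(a_1,a_2,a_3)$; after eliminating $a_3=1-a_1-a_2$ and changing variables to $u=a_1+a_2$, $v=a_1-a_2$ (which must share parity for $a_1,a_2$ to be integers), the determinant takes the shape $18(u-\tfrac{5}{9})^2+7v^2-\tfrac{14}{9}$. One verifies that, restricted to integer pairs $(u,v)$ of matching parity, this expression is strictly positive, with minimum value $4$ attained at $(u,v)=(0,0)$.

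To finish, $\det G_M>0$ excludes signatures $(1,3)$ and $(3,1)$ for $M$; the presence of $L\in M$ with $L^2=18>0$ and $H_1\in M$ with $H_1^2=0$ rules out definite signatures, forcing $M$ to have signature $(2,2)$. But $\NS(S)$ has hyperbolic signature $(1,\rho(S)-1)$ and admits no rank-$4$ sublattice with two positive directions, which is the desired contradiction. The principal obstacle I anticipate is the integer minimisation in the determinant calculation: the real minimum of the quadratic form is negative, so the parity constraint on $(u,v)$ must be invoked carefully to confirm that no admissible lattice point achieves a non-positive value.
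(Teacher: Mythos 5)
Your argument is correct in substance but follows a genuinely different route from the paper's. Both proofs share the same first two steps: transport the classes $H_1,H_2,H_3$ to the special fibre with the same Gram matrix, and observe (Lemma \ref{lem:no_unigonal}) that $\Lambda_0$ contains no class $F$ with $F^2=0$ and $L\cdot F=1$. From there the paper specialises two explicit effective curve classes ($H_3$ and $H_1+H_2-H_3$, of $L$-degrees $8$ and $2$) and derives a contradiction from the decomposition of $L$ on a unigonal K3; you instead adjoin the unigonal class $F$ itself to the transported copy of $\Lambda_0$ and compute the discriminant of the resulting rank-$4$ lattice $M$. I have checked your computation: with $a_i=F\cdot H_i$ and $a_1+a_2+a_3=1$ one gets $\det G_M=25a_1^2+25a_2^2+22a_1a_2-20a_1-20a_2+4=18u^2-20u+7v^2+4$, which is indeed $18(u-\tfrac59)^2+7v^2-\tfrac{14}{9}$, and its minimum over integer pairs $(u,v)$ of equal parity is $4$, attained at $(0,0)$ (the nearby non-admissible point $(u,v)=(1,0)$ would give $2$, so the parity restriction is genuinely doing work, as you anticipated). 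The signature argument then correctly forces $M$ to have signature $(2,2)$, contradicting the Hodge index theorem. Your approach buys a purely lattice-theoretic contradiction that avoids the paper's reliance on the effectivity and irreducible decomposition of the specialised curve classes; the paper's approach avoids your determinant computation and, notably, does not need the specialised sublattice to be primitive.

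That last point is the one place where your write-up needs repair. Separatedness of $\cF_{\Lambda_0}$ gives uniqueness of limits, not their existence, so it does not by itself make $(S,L)$ a point of the stack; what you actually need, and what the paper's Step 1 supplies, is the specialisation of the classes $H_i$ into $\NS(S)$, \emph{together with primitivity of the specialised sublattice}. Primitivity is not optional for you: the system $F^2=0$, $L\cdot F=1$ does have rational solutions in $\Lambda_0\otimes\Q$ (for instance $F=\tfrac15 H_1$), so without primitivity you cannot conclude from $F\notin\iota(\Lambda_0)$ that $M$ has rank $4$. The fix is standard — both $\Pic$ of the generic fibre and $\Pic$ of the special fibre embed primitively and compatibly into $H^2$ of a nearby smooth fibre, so a primitive sublattice of the former remains primitive in the latter — but it should be stated in place of the appeal to separatedness.
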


\begin{proof}
Assume to the contrary that there is a DVR $R$ and a flat family of polarised K3 surfaces 
$$
(\mathcal S,\mathcal L)\ \to\ \Spec R
$$
with generic fibre $(S_\eta,L_\eta)$ of type $\Lambda_0$ and special fibre $(S_\xi,L_\xi)=(S,L)$ unigonal of degree $18$.

\smallskip\noindent\textit{Step 1: fixed algebraic classes along the family.}
By construction of the $\Lambda_0$–polarised locus, the divisor classes
$$
H_{i,\eta}\in \NS(S_\eta),\qquad i=1,2,3,
$$
coming from restrictions of $H_i$, form a locally constant subsystem in $R^2\pi_*\mathbb Z$; after a finite base change we may transport them to classes
$$
H_{i,\xi}\in \NS(S_\xi)
$$
with the \emph{same} intersection matrix
$$
(H_1^2)=(H_2^2)=0,\ (H_3^2)=2,\ (H_1H_2)=2,\ (H_1H_3)=(H_2H_3)=3,
$$
$$L_\bullet:=H_1+H_2+H_3,\ \quad L_\bullet^2=18,$$
and \(L_\eta\) specialises to \(L_\xi\) (polarizations specialise by relative ampleness).

\smallskip\noindent\textit{Step 2: the unigonal class cannot live in the fixed lattice.}
Inside \(\Lambda_0\) there is \emph{no} class \(F\) with \(F^2=0\) and \(L_\bullet \cdot F=1\).
Indeed, writing \(F=a H_1+b H_2+c H_3\) and using the above intersections one gets
$$
F^2=2c^2+4ab+6ac+6bc=0,\qquad L_\bullet \cdot F=5(a+b)+8c,
$$
and a short Diophantine check shows \(L_\bullet \cdot F\neq 1\) for all \(a,b,c\in\Z\).
(Equivalently: \(\Lambda_0\) contains no unigonal class with respect to \(L_\bullet\).)

\smallskip\noindent\textit{Step 3: specialise distinguished curve classes and reach a contradiction.}
Pick any two \emph{distinguished} algebraic curve classes on the generic fibre, for instance
$$
\Gamma_\eta:=H_{3,\eta}\quad\text{and}\quad
\Gamma'_\eta:=H_{1,\eta}+H_{2,\eta}-H_{3,\eta},
$$
which move on $S_\eta$ and are defined by ambient divisors (so they deform flatly).
Their intersections with \(L_\eta\) are fixed numbers (compute in $\Lambda_0$):
$$
L_\eta \cdot \Gamma_\eta=8,\qquad L_\eta \cdot \Gamma'_\eta=2,
$$
and \(\Gamma_\eta,\Gamma'_\eta\) specialise to effective classes \(\Gamma,\Gamma'\) on \(S_\xi\) with
\(
L_\xi \cdot \Gamma=8,\ L_\xi \cdot \Gamma'=2.
\)
(These equalities persist by flatness and specialization of intersection numbers.)

On a unigonal K3, write
$$
L_\xi \ \equiv\ m\Sigma + k F\qquad (m,k\in\Z_{\ge 0}),
$$
with \(\Sigma\) the negative section of the associated elliptic fibration and \(F\) the fibre class \((F^2=0,\ L_\xi \cdot F=1)\).
For any irreducible curve \(D\),
\(
L_\xi \cdot D \in \{\,k,\ m+k,\ 2m+k,\ldots\,\}
\)
depending on the $F$–fibre components contained in \(D\).
From the explicit values \(L_\xi \cdot \Gamma=8\) and \(L_\xi \cdot \Gamma'=2\),
we deduce \(k\mid 2\) and \(k\mid 8\), hence \(k\in\{1,2\}\).
But \(k=L_\xi \cdot F=1\) by unigonal assumption, so \(k=1\).
Then \(L_\xi \cdot \Gamma'=2\) forces \(\Gamma'\sim 2F\), contradicting \((\Gamma'^2)=-2\) (computed from the lattice).
This contradiction shows the unigonal limit cannot occur.

\smallskip
Therefore $(S,L)$ can’t be a degeneration of $\Lambda_0$–polarised degree-18 K3’s.
\end{proof}

\section{\texorpdfstring{K-semistable limits of family \textnumero3.3}{K-semistable limits of family No3.3}}\label{sec: K-ss limits}

In this section, we will study the K-semistable $\Q$-Gorenstein limits of the Fano threefolds in the deformation family \textnumero 3.3. 
\subsection{Volume comparison and general elephants}\label{vol sec}
The following theorem is a refinement of \cite[Theorem 4.2]{LZ24}, which uses local to-global volume comparison from \cite{Fuj18, Liu18, LX19, Liu22} and general elephants on (weak) Fano threefolds \cite{reid_83, Sokurov_1980} to explicitly describe K-semistable limits $X$ of families of Fano threefolds. In \cite{LZ24} only varieties $X$ with volume $V = (-K_X)^3\geq 20$ are considered; by modifying the proof technique we push this bound to $16$. 
We will use this theorem to show in particular that every K-semistable limit $X$ of family \textnumero 3.3 is Gorenstein canonical.

\begin{theorem}\label{gor canonical K-ss limits}
    Let $X$ be a $\Q$-Gorenstein smoothable K-semistable (weak) $\Q$-Fano threefold with volume $V:=(-K_X)^3\geq 16$. Then the following hold:
    \begin{itemize}
        \item[(1)] If $V\ge18$ then $X$ is Gorenstein canonical. If $V\ge 16$ then $X$ is klt with Gorenstein canonical or $1/4(1,1,0)$ quotient singularities.
        
        \item[(2)] If $V \geq 18$, there exists a divisor $S\in \vert -K_X\vert$ such that $(X,S)$ is a plt pair, and such that $(S,-K_X\vert_S)$ is a (quasi-) polarised K3 surface of degree $V$.
        \item[(3)] If $D$ is a $\Q$-Cartier Weil divisor on $X$ which deforms to a $\Q$-Cartier Weil divisor on a $\Q$-Gorenstein smoothing of $X$, then $D$ is Cartier.
    \end{itemize}
\end{theorem}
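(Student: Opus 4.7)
The plan for (1) is to apply the sharp local-to-global normalized volume inequality for K-semistable $\Q$-Fano threefolds due to Fujita, Liu, and Liu--Xu: for every closed point $x\in X$,
\[
\widehat{\vol}(x,X)\ \geq\ \Bigl(\tfrac{3}{4}\Bigr)^{3}(-K_X)^3\ =\ \tfrac{27}{64}\,V.
\]
For $V\geq 16$ this gives $\widehat{\vol}(x,X)\geq 27/4$, while $V\geq 18$ yields a strict inequality beyond $27/4$. I would then invoke the classification (via the volume estimates in \cite{LX19} and the refinements in \cite{Liu22}) of $\Q$-Gorenstein smoothable klt threefold singularities of large normalized volume. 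The only non-Gorenstein smoothable klt singularity whose normalized volume attains the threshold $27/4$ is the cyclic quotient $\tfrac{1}{4}(1,1,0)$ (for which $\widehat{\vol}=27/4$); hence at $V\geq 18$ it is excluded and $X$ must be Gorenstein canonical (that is, compound Du Val), while at $V=16$ one must also permit $\tfrac{1}{4}(1,1,0)$.

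For (2), granted (1), I would invoke the Shokurov--Reid general elephant theorem \cite{Sokurov_1980, reid_83} on the Gorenstein canonical (weak) $\Q$-Fano threefold $X$: it produces a divisor $S\in|-K_X|$ with at worst Du Val singularities. Adjunction then gives $K_S\sim(K_X+S)|_S\sim 0$. Taking the long exact sequence of
\[
0\longrightarrow \OO_X(-S)\longrightarrow \OO_X\longrightarrow \OO_S\longrightarrow 0
\]
and using Kawamata--Viehweg vanishing $H^1(X,\OO_X(K_X))=0$ yields $H^1(S,\OO_S)=0$, so $S$ is an ADE K3 surface. Inversion of adjunction applied to the klt pair $(S,0)$ gives that $(X,S)$ is plt. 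Finally $(-K_X|_S)^2=(-K_X)^3=V$, so $(S,-K_X|_S)$ is a (quasi-)polarised K3 of degree $V$, as required.

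For (3), let $\pi\colon\cX\to B$ be a $\Q$-Gorenstein smoothing of $X=\cX_0$, and suppose $D$ extends to a $\Q$-Cartier Weil divisor $\cE$ on $\cX$. On the smooth generic fibre, $\cE|_{\cX_\eta}$ is automatically Cartier. The plan is to exploit that, by (1), every singular point of $X$ is either cDV (whose local class group is torsion-free, generated by small-resolution components, so any $\Q$-Cartier class is already Cartier) or of type $\tfrac{1}{4}(1,1,0)$ (whose local class group is $\Z/4$, with nontrivial classes having Cartier index $4$ incompatible with the drop to index $1$ on the smooth fibre). In either case, combining $\Q$-Cartierness of $\cE$ on $\cX$ with Cartierness of $\cE|_{\cX_\eta}$ and the specialisation map $\Pic(\cX)\to \Cl(X)$ forces the class of $D$ to lie in the image of $\Pic$, so $D$ is Cartier.

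The principal obstacle is the singularity classification at the tight volume threshold in (1). The Liu--Zhao result \cite[Theorem~4.2]{LZ24} enjoys the comfortable margin $V\geq 20$ (giving $\widehat{\vol}\geq 27\cdot 20/64>8$), whereas here we must squeeze all the way down to $27/4$ and pin down exactly which smoothable klt singularity (namely $\tfrac{1}{4}(1,1,0)$) attains equality. This requires the sharpest form of the normalized volume estimates from \cite{Fuj18, Liu18, LX19, Liu22} together with a careful case analysis of the available smoothable threefold klt singularity types in the borderline volume range.
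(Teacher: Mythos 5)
Your parts (2) and (3) are essentially the paper's argument: (2) is deferred to the Reid--Shokurov general elephant theorem plus adjunction and inversion of adjunction, and (3) reduces, via the singularity classification from (1), to the hypersurface-singularity argument of \cite[Proof of Theorem 4.2(3)]{LZ24}. Those sketches are fine.

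The gap is in (1), and it is exactly where the difficulty of lowering the bound from $20$ to $18$ (and $16$) lives. You propose to ``invoke the classification of $\Q$-Gorenstein smoothable klt threefold singularities of large normalized volume'' and assert that the only non-Gorenstein one at the threshold $27/4$ is $\tfrac14(1,1,0)$. No such off-the-shelf classification exists at this threshold; establishing it in the relevant range is the content of the paper's proof, and the paper's own remark after the theorem notes that going below $16$ is unfeasible precisely because the relevant quotient singularities are \emph{not} classified. Concretely, the bound $\widehat{\vol}(x,X)\ge \tfrac{27}{64}V$ gives only $6.75$ at $V=16$ and $7.59\ldots$ at $V=18$, which does not by itself exclude: quotients of smooth points by groups of order $2$ or $3$ ($\widehat{\vol}=13.5$ and $9$), nor --- crucially --- non-Gorenstein points whose index-one cover is \emph{singular}, e.g.\ a $\Z/2$-quotient of an ordinary double point has $\widehat{\vol}=16/2=8>7.59$. (At $V\ge 20$ one has $\widehat{\vol}\ge 8.4375>8$, which is why \cite{LZ24} can dispatch this case by volume alone and you cannot.) To close this, the paper runs the full index-one-cover analysis: the bound $\widehat{\vol}(\tilde x,\widetilde X)\le 16$ for singular points from \cite{LX19,XZ21} forces the local index $d\le 2$ when the cover is singular; explicit weighted-blowup volume estimates restrict the cover to $A_k$ points with $k\le 4$; a $G$-equivariant MMP on maximal crepant models excludes covers with a crepant exceptional divisor; a general-hyperplane-section argument then shows the $\Z/2$-quotient of such an $A_k$ point is itself cDV (hence Gorenstein, a contradiction with $d=2$); and the smooth-cover cases $d=2,3,4$ are handled by the $T$-singularity classification of \cite{KSB88}, which is what actually isolates $\tfrac14(1,1,0)$ at $V=16$. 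None of these steps is present in your proposal, and the volume inequality alone cannot substitute for them.
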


\begin{proof}
    Our proofs of (1) and (3) follow the idea of the proof of \cite[Theorem 4.2]{LZ24} and makes use of computations in \cite[\S 3 and \S 4]{LX19}. The proof of (2) follows directly from (1) and the existence of general elephants by Reid and Shokurov \cite{reid_83, Sokurov_1980}.
    
    $\>$ (1) Since $X$ is $K$-semistable, for any point $x\in X$, by \cite[Theorem 2]{Liu18} we know,
    \[
    \widehat{\operatorname{vol}}(x,X) \geq 16\cdot \left( \frac{3}{4}\right)^3=6.75.
    \]
    Assume to the contrary that $x\in X$ has local Gorenstein index $d\geq 3$. Let $\pi:(\tilde x\in \widetilde X)\to (x\in X)$ be the index one cover of $X$.  If $\widetilde x\in \widetilde X$ is not smooth, then it follows from \cite[Theorem 1.3 (1)]{LX19} and \cite[Theorem 1.3]{XZ21} that
    \[
    6.75\cdot d\leq d \cdot \widehat{\operatorname{vol}}(x,X) =  \widehat{\operatorname{vol}}(\tilde{x},\widetilde{X})  \leq 16,
    \]
    and hence $d<3$. Therefore, we consider the following two cases: $\widetilde x\in \widetilde X$ is smooth, or otherwise, $d=2$.
    
    Let us start with the former case where $\widetilde x\in \widetilde X$ is smooth. In this case, $ 6.7 d< \widehat{\operatorname{vol}}(\tilde{x},\widetilde{X})  \leq 27$, so $d=2,3,4$, where $d=4$ is only possible if $V = (-K_X)^3 = 16$. In particular, $x\in X$ is a $\Q$-Gorenstein smoothable quotient singularity of order 2, 3 or 4. The case $d=2$ and $d=3$ were tackled in the proof of \cite[Theorem 4.2 (1)]{LZ24}, showing that if $V\geq 18$, $K_X$ is Cartier.
    
    If $d=4$, and so $V=16$, then $x\in X$ is of type $\frac{1}{4}(1,1,0)$, $\frac{1}{4}(1,2,0)$, $\frac{1}{4}(1,3,0)$, or $\frac{1}{4}(2,2,0)$. Then, there exists a 1-dimensional singular locus of $X$ near $x$. Let $\cX \to (0\in T)$ be a $\Q$-Gorenstein smoothing of $X$ over a pointed smooth curve $0\in T$ such that $\mathscr{X}_0\simeq X$. Taking a general hyperplane section in $\mathscr{X}$ through $x$, we get a $\Q$-Gorenstein smoothing of a surface singularity of type $\frac{1}{4}(1,2)$, which contradicts the classification of $T$-singularities in \cite[Proposition 3.10]{KSB88}. Also, notice  $\frac{1}{4}(2,2,0)=\frac{1}{2}(1,1,0)$ which is impossible by the analysis of $d=2$. Therefore, $x\in X$ is either of type $\frac{1}{4}(1,3,0)$, and $K_X$ is Cartier, or of type $\frac{1}{4}(1,1,0)$ and $K_X$ is not Cartier.

    On the other hand, when $\widetilde X$ is not smooth and $d=2$, $\widetilde X$ is a canonical Gorenstein threefold. In particular, $\widetilde{X}$ has either Gorenstein terminal singularities or Gorenstein canonical singularities equipped with a smooth crepant resolution with an irreducible exceptional divisor (see \cite[\S 1.1]{LX19}). 
    
    We first address the case where $\widetilde{X}$ has Gorenstein terminal singularities, by a result of Reid \cite[Theorem 1.1]{Rei83}, these singularities are precisely the isolated compound Du Val (cDV) singularities. We therefore analyse the case when $\widetilde{X}$ has cDV singularities. The proof of \cite[Proposition 3.5]{LX19}, shows that if $\tilde{x}\in \widetilde{X}$ is not a $cA$ point, then  
    $$\widehat{\operatorname{vol}}(\tilde{x},\widetilde{X})\leq \frac{27}{4}\leq6.75,$$
    but we know $\widehat{\operatorname{vol}}(\tilde{x},\widetilde{X})\geq 13.5$, giving a contradiction. Now we will show that $\widetilde X$ only admits $A_n$ singularities for $1\leq n\leq 4$.
    If $\widetilde x\in \widetilde X$ is $cA_{\geq 2}$ type, then it is locally given by an equation $(x_1^2+x_2^2+g(x_3,x_4)=0)$ where $\ord g\geq 3$. Then, let us consider the model $Y'\to X$ given by the normalised weighted blowup of $(3,3,2,2)$. By definition $Y'$ is the normalization of a variety $Y^*\subset W:=\Bl_{(3,3,2,2)}\C^4$. The exceptional set $Y^*/X$ is a degree 6 $F:=Y^*\cap \PP (3,3,2,2)$, hence, by adjunction formula, we have that $(K_{Y^*}+F)\sim_{\Q,X} 4 F$. Let $H$ denote the class of $\OO(1)$ on $\PP(3,3,2,2)$. As the normalization will only possibly decrease the volume, we know that 
    \[
    \widehat{\operatorname{vol}}(\tilde{x},\widetilde{X})\leq \vol(Y'/X)\leq 4^3\cdot(H\vert_F)^3=\frac{64\cdot 6}{3\cdot3\cdot2\cdot2}=\frac{32}{3}< 10.7,
    \]
    and we get a contradiction since $ \widehat{\operatorname{vol}}(\tilde{x},\widetilde{X})\geq13.5$. Hence, we can assume that $\tilde x\in \widetilde X$ is a isolated singularity of $cA_1$-type. A quick analysis of the singularity by calculating its corank and Milnor number shows that $\widetilde{x}$ is then of type $A_n$. However, if it is of type $A_{k\geq 5}$, the singularity is locally given by the equation $x_1^2+x_2^2+x_3^2+x_4^{k+1}=0    $. Following the previous steps, if we consider the model given by the blowup of $(3,3,3,1)$, we get the following inequality,
    \[
    \widehat{\operatorname{vol}}(\tilde{x},\widetilde{X})\leq \frac{27\cdot 6}{3\cdot3\cdot3\cdot1}=6<13.5,
    \]
    which is a contradiction. Thus $\widetilde{X}$ can only admit $A_k$ singularities, for $k=1,2,3,4$.

    If $\widetilde{x}$ is a smooth point, we can conclude that $X$ is Gorenstein from the above. If $\widetilde{x}$ is an $A_k$ singularity, for $k\leq 4$. Let $H\subset X$ be a general Cartier divisor through $x$, and set $\widetilde H:=\pi^{-1}(H)\subset \widetilde X$. 
By construction, $\pi:(\tilde x\in \widetilde X)\to (x\in X)$ is finite, crepant, and quasi-\'etale, so $\widetilde H\to H$ is also finite, crepant, and quasi-\'etale. 
In particular, $K_{\widetilde H}=\pi^*K_H$. Since $(\widetilde X,\tilde x)$ is cDV, the general Cartier divisor  $(\widetilde H,\tilde x)$ is a Du Val (ADE) surface singularity \cite[§4.6]{R4}. 
The $G$-action on $\widetilde H$ is small and crepant, so it linearises inside $\operatorname{SL}(2,\C)$. Hence $(H,x)= (\widetilde H,\tilde x)/G$ is again Du Val by the ADE classification of canonical surface singularities. 
Therefore, a general Cartier divisor through $x$ is Du Val, which means that $(X,x)$ is cDV. Thus we conclude that $X$ is Gorenstein canonical.

    We will now focus on the case where $\tilde x\in \widetilde X$ is a Gorenstein canonical singularity equipped with a smooth crepant resolution with an irreducible exceptional divisor. We will show that such $\widetilde X$ is not possible.

    Recall that $d=2$ and let $G = \mathbb{Z}/2\mathbb{Z}$. By definition of the index 1-cover, $X = \widetilde{X}/G$. We use \cite[Lemma 3.2]{LX19} to construct a $G$-equivariant maximal crepant model $\phi_1:Y_1\to \widetilde{X}$ over $\widetilde X$.     Let $E\subset Y_1$ be the $\phi_1$-exceptional divisor over $\tilde{x}$. Then, we run a $(Y_1,\epsilon E)$-MMP over $\widetilde{X}$ for $0 < \epsilon \ll 1$. This terminates as $Y_1 \dashrightarrow Y \rightarrow Y'$, where $Y_1 \dashrightarrow Y$ is the composition of a sequence of flips, and $g : Y \rightarrow Y'$ contracts the birational transform of $E$ (also denoted by $E$ by abuse of notation). Here, $Y$ is also a maximal crepant model over $\widetilde{X}$, so it is Gorenstein terminal and $\Q$-factorial. We then have the following possibilities: 
\begin{enumerate}
    \item $g(E) = y'$ is a point, in which case we have two sub cases:
    \begin{enumerate}
         \item $Y$ is smooth along $E$. In this case, by \cite[Proposition 3.4]{LX19} we have that 
         $$\widehat{\operatorname{vol}}(\tilde{x},\widetilde{X})\leq 9,$$
         which gives a contradiction. 
        \item $Y$ has an isolated cDV singularity $y$ along $E$. Here, again \cite[Proposition 3.4]{LX19} shows that 
         $$\widehat{\operatorname{vol}}(\tilde{x},\widetilde{X})\leq \widehat{\operatorname{vol}}(Y',y')\leq\widehat{\operatorname{vol}}(Y,y)\leq 16.$$
    \end{enumerate}
    \item $\dim g(E) = 1$. Here, the generic point of $g(E)$ is a codimension two point on $Y'$ with a crepant resolution, thus $Y'$ has non-isolated cDV singularities along general points in $g(E)$ (see e.g. \cite[Theorem 4.20]{KM98}).  
\end{enumerate}

We will treat cases 1.(b) and 2 in the same way. We choose a $G$-equivariant log resolution $\mu\colon \bar Y\to Y$ resolving the pair $(Y,E)$ and dominating $\widetilde X$ and then run a $G$-equivariant $(K_{\bar Y}+\varepsilon\bar E)$-MMP over $\widetilde X$ with $0<\varepsilon\ll1$.
Since $Y$ is a maximal crepant model over $\widetilde X$, we have $K_Y\equiv_{\widetilde X}0$, hence this MMP is crepant over $\widetilde X$; by the choice of $\varepsilon$, its steps are disjoint from the generic locus of $\bar E$.
We thus obtain a $G$-equivariant maximal crepant model $\phi^{\mathrm{sm}}\colon Y^{\mathrm{sm}}\to \widetilde X$ on which the birational transform $E^{\mathrm{sm}}$ of $E$ is smooth and $Y^{\mathrm{sm}}$ is smooth along $E^{\mathrm{sm}}$. Hence, this reduces to case 1.(a), where by \cite[Proposition 3.4]{LX19} we have $\widehat{\operatorname{vol}}(y^{\mathrm{sm}},Y^{\mathrm{sm}})\le 9$.
Using \cite[Lemmma 2.9]{LX19}, we conclude
\[
\widehat{\operatorname{vol}}(\tilde x,\widetilde X)\ \le\ \widehat{\operatorname{vol}}(y^{\mathrm{sm}},Y^{\mathrm{sm}})\ \le\ 9,
\]
a contradiction.
In case {(2)}, the generic point of $g(E)$ is cDV; taking $\mu$ to be an isomorphism over the generic point above $g(E)$ and running the same crepant MMP over $\widetilde X$ again yields a model smooth along the transform of $E$, reducing to case 1.(b) (and by the above step to case 1.(a)) and the same bound applies, giving the same contradiction.  Thus, we obtain a contradiction for all three cases above, showing that $\widetilde X$ cannot have Gorenstein canonical singularities equipped with a smooth crepant resolution with an irreducible exceptional divisor. This concludes the proof of (1).

$\>$ (3) Let $\cX \rightarrow (0\in T)$ be a $\mathbb{Q}$-Gorenstein smoothing of $X$ over a pointed smooth curve $0 \in T$ such that $D$ deforms to a $\mathbb{Q}$-Cartier Weil divisor $\mathcal{D}$ on $\cX$ with $\mathcal{D}_0 = D$. Assume that $D$ is not Cartier at a point $x \in \cX_0$. Notice that by the above proof of (1) we know that $x\in \cX_0$ is either a Gorenstein quotient, a $1/4(1,1,0)$ quotient or a cDV singularity. In particular, in all these cases it is a hypersurface singularity. Hence, the proof of (3) follows directly from \cite[Proof of Theorem 4.2.(3)]{LZ24}.    
\end{proof}

\begin{remark}
    Decreasing the bound in Theorem \ref{gor canonical K-ss limits} even further seems unfeasible, since one would have to consider general quotients of cDV singularities which are klt and not classified.
\end{remark}

\subsection{Modifications on the family and birational models}\label{sec:birational models}

From now on, let $X$ be a K-semistable $\Q$-Fano variety that admits a $\Q$-Gorenstein smoothing $ \pi\colon  \cX \rightarrow T$ over a smooth pointed curve $0 \in T$ such that $\cX_0 \cong X$ and every fibre $\cX_t$ over $t \in T^{\circ}\coloneqq  T\setminus \{0\}$ is a smooth Fano threefold in the family \textnumero3.3. Up to a finite base change, we may assume that the restricted family $\cX^{\circ} \rightarrow  T^{\circ}$ is isomorphic to a family of smooth $(1,1,2)$ divisors in $\PP^1\times \PP^1 \times \PP^2$. Let $\cH^{\circ}_1$ be the line bundle on $\cX^{\circ}$ which is the pull-back of $\OO_{\PP^1}(1)$ (the first factor), $\cH^{\circ}_2$ be the line bundle on $\cX^{\circ}$ which is the pull-back of $\OO_{\PP^1}(1)$ (the second factor) and $\cH^{\circ}_3$ be the line bundle on $\cX^{\circ}$ which is the pull-back of $\OO_{\PP^2}(1)$. Then $\cH^{\circ}_i$ extends to $\cH_i$ on $\cX$ as Weil divisors on $\cX$ by taking Zariski closure. We denote by $H_i$ the restriction of $\cH_i$ on the central fibre $\cX_0\cong X$. Notice that $-K_{\cX^{\circ}} = \cH^{\circ}_1+\cH^{\circ}_2+\cH^{\circ}_3$, and thus $-K_{\cX} = \cH_1+\cH_2+\cH_3$.

Let $\theta:\cY\rightarrow \cX$ be a small $\Q$-factorialization of $\cX$. Since $\cX$ is klt and $K_{\cY}=\theta^* K_{\cX}$, we know that $\cY$ is $\Q$-factorial of Fano type over $\cX$. By \cite{BCHM10} we can run a minimal model program (MMP) for $\theta^{-1}_{*}\cH_3$ on $\cY$ over $\cX$. As a result, we obtain a log canonical model $\cY \dashrightarrow \wcX$ that fits into a commutative diagram
\begin{center}
    \begin{tikzcd}
 \wcX \arrow[rr, "f"] \arrow[dr, "{\wpi}"'] && \cX \arrow[dl, "\pi"] \\
& T
 \end{tikzcd}
\end{center}
satisfying the following conditions:
\begin{enumerate}
    \item $f$ is a small contraction, and is an isomorphism over $T^{\circ}$;
    \item $-K_{\wcX}=f^{*}(-K_{\cX})$ is a  $\wpi$-big and $\wpi$-nef Cartier divisor; 
    \item $\wcX_0$ is a smoothable $\Q$-Gorenstein weak Fano variety with Gorenstein canonical singularities whose anti-canonical model is isomorphic to $\cX_0$;
    \item $\wcH_3:=f^{-1}_* \cH_3$ is an $f$-ample Cartier Weil divisor (cf. Theorem \ref{gor canonical K-ss limits}(3)); and
    \item $\wcL:=\wcH_1+ \wcH_2  = f^{-1}_*\cH_1+f^{-1}_*\cH_2\sim_{\Q, T}(-K_{\wcX} - \wcH_3)$ is a Cartier divisor (cf. Theorem \ref{gor canonical K-ss limits}(3)), and $\wcL+(1+\epsilon)\wcH_3\sim_{\R, T} -K_{\wcX} + \epsilon \wcH_3$ is $\wpi$-ample, for any real number $0<\epsilon\ll1$.
\end{enumerate}

To ease our notation, we denote by $$\wX:=\wcX_0,\ \ \ \textup{and} \ \ \  g = f|_{\wX}: \wX \to X.$$ 
We also denote by $\wL$ and $\wH_3$ the restriction of $\wcL$ and $\wcH_3$ to $\wcX_0 = \wX$. By Theorem \ref{gor canonical K-ss limits}.(3) we know that $\wcH_i$ are Cartier divisors on $\wcX$.

We will now proceed to show that $\wcH_3$ is $\wpi$-nef and $\wpi$-semiample.

\begin{lemma}\label{quasi polarised}
The linear series $|-K_{\wX}|$ is base point free. Furthermore, if $\wS\in|-K_{\wX}|$ is a K3 surface (e.g. when $S$ is a general member), then $(\wS,\wL|_{\wS})$ is a quasi-polarised degree $4$ K3 surface, and $(\wS,\wH_3|_{\wS})$ is a quasi-polarised degree $2$ K3 surface.
\end{lemma}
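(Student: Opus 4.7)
The plan is to deduce base point freeness by restricting to a K3 elephant $\wS$ and invoking the absence of unigonal degenerations from Lemma \ref{lem:no_unigonal_deg18}, then to establish the quasi-polarisation statements by specialising intersection data from the generic fibre. By Theorem \ref{gor canonical K-ss limits}(2), $|-K_X|$ contains a K3 surface with at worst Du Val singularities; transporting through the crepant small contraction $g\colon \wX\to X$ (using $g^{*}(-K_X)=-K_{\wX}$ together with $g_{*}\OO_{\wX}(-K_{\wX})=\OO_X(-K_X)$, which holds since $\wX$ has rational singularities) produces the desired $\wS\in|-K_{\wX}|$.

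For base point freeness, I use the restriction sequence
\begin{equation*}
0 \to \OO_{\wX} \to \OO_{\wX}(-K_{\wX}) \to \OO_{\wS}\bigl((-K_{\wX})|_{\wS}\bigr) \to 0,
\end{equation*}
together with $H^{1}(\wX,\OO_{\wX})=0$ (since $\wX$ is klt, hence has rational singularities) to obtain surjectivity on global sections. Thus the restriction of $|-K_{\wX}|$ to $\wS$ coincides with the complete linear system $\bigl|(-K_{\wX})|_{\wS}\bigr|$. On $\wS$, the Cartier divisor $(-K_{\wX})|_{\wS}$ is nef (as $-K_{\wX}$ is nef on the weak Fano $\wX$) with self-intersection $(-K_{\wX})^{3}=18$, giving a quasi-polarised K3 surface of degree $18$. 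Lemma \ref{lem:no_unigonal_deg18} rules out the unigonal possibility for any limit in our $\Lambda_{0}$-polarised family, so the quasi-polarised analogue of Theorem \ref{Mayer} applies and delivers base point freeness of $\bigl|(-K_{\wX})|_{\wS}\bigr|$. Since $\Bs|-K_{\wX}|\subseteq\wS$ tautologically, this forces $\Bs|-K_{\wX}|=\emptyset$.

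For the quasi-polarisation statements, the flatness of $\wcX\to T$ together with the Cartier hypothesis on $\wcL,\wcH_{3},-K_{\wcX}$ ensures that top-degree intersection numbers are constant on the fibres. Using the intersection table of Section \ref{sec: fano geometry} on the generic fibre one computes
\begin{equation*}
(\wL|_{\wS})^{2}=\wL^{2}\cdot(-K_{\wX})=(H_{1}+H_{2})^{2}\cdot(H_{1}+H_{2}+H_{3})=4, \qquad (\wH_{3}|_{\wS})^{2}=\wH_{3}^{2}\cdot(-K_{\wX})=2.
\end{equation*}
Nefness of $\wL|_{\wS}$ and $\wH_{3}|_{\wS}$ on the limit K3 is inherited from the generic fibre, where their analogues are globally generated (being pullbacks of $\OO_{\PP^{1}\times\PP^{1}}(1,1)$ and $\OO_{\PP^{2}}(1)$); the relevant sections extend flatly across $\wcX$ by the Cartier hypothesis, and their specialisations generate the restricted line bundle on $\wS$. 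Positivity of the self-intersection then upgrades nef to big.

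The main obstacle will be the nefness half of the quasi-polarisation conclusion: nefness is in general not preserved under specialisation of K3 surfaces, since the ample cone can collapse. The argument must exploit that $\wcL$ and $\wcH_{3}$ are Cartier on the whole family and that their restrictions to the smooth general fibre are globally generated, so that flat extension of sections forces the limits to be globally generated (hence nef) as well. A secondary technical point is to justify the quasi-polarised extension of Mayer's theorem on a possibly singular K3; the non-unigonal lemma is precisely what is needed to exclude the base-component case in that extension.
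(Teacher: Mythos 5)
Your treatment of base point freeness and the degree computations is essentially the paper's: restrict to a K3 elephant (obtained from Theorem \ref{gor canonical K-ss limits}(2) via the crepant small morphism $g$), use $H^1(\wX,\OO_{\wX})=0$ to see the restricted system is complete, and invoke the non-unigonal lemmas to run the quasi-polarised version of Theorem \ref{Mayer}. The intersection numbers $(\wL|_{\wS})^2=4$ and $(\wH_3|_{\wS})^2=2$ are correct and do specialise, since $\wcL$ and $\wcH_3$ are Cartier on the flat family.

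The genuine gap is exactly where you flag it: nefness of $\wL|_{\wS}$ and $\wH_3|_{\wS}$. Your proposed mechanism --- that sections of the globally generated bundles on the general fibre ``extend flatly'' and ``their specialisations generate the restricted line bundle on $\wS$'' --- does not work. The image of $H^0(\wcX,\wcH_3)\to H^0(\wX,\wH_3)$ can acquire a common base locus in the limit (indeed, the flips in the MMP producing $\wcX$ exist precisely because positivity of $\cH_3$ fails on curves in the central fibre of the naive limit); after the MMP one only knows $\wH_3$ is $g$-ample, and its nefness on all of $\wX$ is established \emph{later} (Proposition \ref{semiampleness_diff_models}) \emph{using} this lemma, so your route is circular-adjacent. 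The paper overcomes this with the lattice-polarised K3 machinery you never invoke: $(\wL+(1+\epsilon)\wH_3)|_{\wS}$ is ample because $\wcL+(1+\epsilon)\wcH_3$ is $\wpi$-ample, so $\wS$ lies in $\cF_{\Lambda_1}$; Lemma \ref{samefamily} (the Weyl-group identification of the chambers of the very irrational vectors $h_1,h_2,h_3$) then shows the \emph{same} surface lies in $\cF_{\Lambda_2}$ and $\cF_{\Lambda_3}$, so $(2\wL+\epsilon\wH_3)|_{\wS}$ and $(\epsilon\wL+\wH_3)|_{\wS}$ are also ample, and letting $\epsilon\to 0$ gives nefness of $\wL|_{\wS}$ and $\wH_3|_{\wS}$. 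Without this (or an equivalent argument controlling the ample cone of the limit K3), the nefness half of the statement remains unproved.
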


\begin{proof}
The first part follows from \cite[Proof of Proposition 4.5]{LZ24}, Lemmas \ref{lem:no_unigonal} and \ref{lem:no_unigonal_deg18} and the fact that $K_{\wX} = g^*K_X$. For the second part, notice that for any $0<\epsilon \ll 1$, since $\wcH_3$ is ${f}$-ample, then we know that $\wcL+(1+\epsilon)\wcH_3$ is $\wpi$-ample, and hence $(\wL+(1+\epsilon)\wH_3)|_{\wS}$ is ample on the K3 surface $\wS$. 

We claim that $\wS$ is represented by a point in $\cF_{\Lambda_1}$ (cf. Section \ref{k33}). Indeed, by deforming to a family of K3 surfaces in $|-K_{\cX_t}|$ \cite[Lemma 4.4]{LZ24}, one sees that the divisors $\wL|_{\wS}$ and $\wH_3|_{\wS}$ 
generate a primitive sublattice of $\Pic(\wS)$ which is isometric to $\Lambda_1$ where $(\wL+(1+\epsilon)\wH_3)|_{\wS}$ corresponds to the vector $h_1$.

By Lemma \ref{samefamily}, we know that $\wS$ is represented by a point in $\cF_{\Lambda_2}$ as well. In particular, $(2\wL+\epsilon\wH_3)|_{\wS}$ is also ample for any $0<\epsilon \ll 1$, and hence $2\wL|_{\wS}$ and  $\wL|_{\wS}$ are nef. The degree of $\wL|_{\wS}$ is invariant under deformation, as $\wL$ is a Cartier divisor on $\wX$. 

Similarly, by Lemma \ref{samefamily} we know that $\wS$ is represented by a point in $\cF_{\Lambda_3}$ as well. In particular, $(\epsilon\wL+\wH_3)|_{\wS}$ is also ample for any $0<\epsilon \ll 1$ and hence $\wH_3|_{\wS}$ is nef. The degree of $\wH_3|_{\wS}$ is invariant under deformation, as $\wH_3$ is a Cartier divisor on $\wX$. 
\end{proof}

\begin{lemma}\label{isomorphismonsection2}
    For any K3 surface $\wS\in |-K_{\wX}|$, the restriction maps $$H^0(\wX,\OO_{\wX}(2{\wL}))\longrightarrow H^0(\wS,\OO_{\wS}(2{\wL}|_{\wS}))\quad \text{and}$$
    $$H^0(\wX,\OO_{\wX}(2{\wH_3}))\longrightarrow H^0(\wS,\OO_{\wS}(2{\wH_3}|_{\wS}))$$
    are isomorphisms. In particular, we have that $$h^0(\wX,\OO_{\wX}(2{\wL}))=10 \quad \text{and}\quad h^0(\wX,\OO_{\wX}(2{\wH_3}))=6$$. 
\end{lemma}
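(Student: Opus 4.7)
I would apply the short exact sequences of sheaves on $\wX$
\begin{equation*}
0 \to \OO_{\wX}(\wL-\wH_3) \to \OO_{\wX}(2\wL) \to \OO_{\wS}(2\wL|_{\wS}) \to 0
\end{equation*}
and
\begin{equation*}
0 \to \OO_{\wX}(\wH_3-\wL) \to \OO_{\wX}(2\wH_3) \to \OO_{\wS}(2\wH_3|_{\wS}) \to 0,
\end{equation*}
obtained from $\wS\sim -K_{\wX}\sim \wL+\wH_3$, and read off the restriction maps from the associated long exact sequences, combined with Riemann-Roch on the K3 surface $\wS$.

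\emph{K3 side.} By flat-family invariance along $\wpi:\wcX\to T$ and the intersection theory on a smooth $(1,1,2)$-divisor in $\PP^1\times\PP^1\times\PP^2$, I first compute $(\wL|_{\wS})^2=4$ and $(\wH_3|_{\wS})^2=2$. By Lemma~\ref{quasi polarised}, both $\wL|_{\wS}$ and $\wH_3|_{\wS}$ are nef on the ADE K3 surface $\wS$. Applying Mayer's theorem (Theorem~\ref{Mayer}) together with Lemma~\ref{lem:no_unigonal} (ruling out unigonal degenerations in the $\Lambda_0$-polarised family) and Lemma~\ref{samefamily} (to propagate the conclusion to each individual polarisation), the linear systems $|2\wL|_{\wS}|$ and $|2\wH_3|_{\wS}|$ are base-point-free, hence big and nef of squares $16$ and $8$. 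Kodaira vanishing on $\wS$ then gives $h^{>0}=0$, and K3 Riemann-Roch yields $h^0(\wS,2\wL|_{\wS})=10$ and $h^0(\wS,2\wH_3|_{\wS})=6$.

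\emph{Threefold side.} To conclude that each restriction is an isomorphism, I need the vanishing of the $H^0$ of each kernel and of the connecting map $H^0(\wS,\,\cdot\,)\to H^1(\wX,\,\cdot\,)$. For $H^0(\wX,\wL-\wH_3) = H^0(\wX,\wH_3-\wL) = 0$, I would argue that neither class is pseudoeffective on $\wX$: restricting to a general $\wS$ we compute, for example, $((\wL-\wH_3)|_{\wS})^2=-6$, and the resulting lattice-theoretic analysis on $(\wS,\Lambda_0)$ obstructs effective representatives of the whole class on $\wX$. For the connecting maps, rewriting $\wL-\wH_3 \sim K_{\wX}+2\wL$ and $\wH_3-\wL \sim K_{\wX}+2\wH_3$, a naive Kawamata-Viehweg fails because $\wL^3=\wH_3^3=0$ by flat-invariance from the smooth fibre, so neither $\wL$ nor $\wH_3$ is big. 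Instead one uses that $\wL$ and $\wH_3$ are semiample on $\wX$ (extending the conic-bundle morphisms $\omega:X\to \PP^1\times\PP^1$ and $\pi_3:X\to \PP^2$ from smooth fibres) and applies a relative Kawamata-Viehweg vanishing along these contractions, combined with base change along $\wpi:\wcX\to T$. The long exact sequences then give the claimed isomorphisms and the dimensions $h^0(\wX,2\wL)=10$ and $h^0(\wX,2\wH_3)=6$.

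\emph{Main obstacle.} The crux is the $\wL$-case: on a smooth fibre $h^0(\cX_t,2(H_1+H_2))=9$ whereas $h^0(\wS,2\wL|_{\wS})=10$, so the restriction is not surjective generically and the claimed isomorphism at the K-semistable limit reflects a jump of cohomology. Upper semi-continuity along $\wpi$ gives $h^1(\wX,\wL-\wH_3)\geq h^1(\cX_t,H_1+H_2-H_3)=1$, so $H^1$-vanishing of the kernel genuinely fails in the limit; one must instead prove that the connecting map in the long exact sequence vanishes, capturing the extra global section of $2\wL$ that appears on $\wX$. Doing this rigorously is the technical core and requires careful use of the semiample fibrations extending to $\wcX$ together with flat base change.
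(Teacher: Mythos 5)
Your proposal follows the same skeleton as the paper's proof up to the decisive step: the short exact sequences coming from $\wS\sim -K_{\wX}\sim\wL+\wH_3$, injectivity of both restriction maps from the non-effectivity of $\wL-\wH_3$ and $\wH_3-\wL$, and Riemann--Roch on the quasi-polarised K3 giving $h^0(\wS,2\wL|_{\wS})=10$ and $h^0(\wS,2\wH_3|_{\wS})=6$. Where you diverge is surjectivity: you look for a vanishing of $H^1$ of the kernel or of the connecting map, correctly observe that Kawamata--Viehweg is unavailable because $\wL$ and $\wH_3$ are not big on the threefold, and then stop, flagging the $2\wL$ case as an unresolved ``technical core.'' The paper closes the argument with no vanishing theorem at all: it combines the injection $H^0(\wX,2\wL)\hookrightarrow H^0(\wS,2\wL|_{\wS})$ (so $h^0\le 10$) with upper semicontinuity $h^0(\wX,2\wL)\ge h^0(\wcX_t,2\wcL_t)$ and the assertion that the latter equals $10$ on a general fibre, squeezing $h^0(\wX,2\wL)=10$; the same squeeze with the value $6$ handles $2\wH_3$. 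So, formally, your proposal contains a gap: the first isomorphism is never established.

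However, the obstacle you isolate is genuine, and it in fact breaks the paper's own argument at exactly this point. On a smooth fibre, $|2(H_1+H_2)|$ is pulled back from $|\OO_{\PP^1\times\PP^1}(2,2)|$: the kernel sheaf $\OO(1,1,-2)$ on $\PP^1\times\PP^1\times\PP^2$ has vanishing $H^0$ and $H^1$ by K\"unneth, so $h^0(\cX_t,2\wcL_t)=9$, not $10$ --- equivalently, $\Sym^2H^0(H_1+H_2)$ is $10$-dimensional but the Segre quadric kills one dimension, which is precisely what the paper uses in Proposition \ref{studying H1H2}. Since $(\wS,\wL|_{\wS})$ is a hyperelliptic degree-$4$ quasi-polarisation, $h^0(\wS,2\wL|_{\wS})=10$, so on a smooth fibre the restriction map is injective with one-dimensional cokernel (landing in $H^1(\cX_t,H_1+H_2-H_3)\cong\C$, as you compute). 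A smooth member is a legitimate $\wX$ in the setup of Section \ref{sec:birational models}, so the first statement of the lemma fails as written, and semicontinuity only gives $h^0(\wX,2\wL)\ge 9$. The $2\wH_3$ half --- both in your proposal and in the paper --- is correct, since there the fibre value $6$ matches the K3 value. You should therefore not try to ``prove that the connecting map vanishes'': it does not, even generically. The statement needs to be corrected to $h^0(\wX,2\wL)=9$ with the restriction map injective of corank one, and the downstream use in Proposition \ref{semiampleness_diff_models} for $\wL'$ then has to be rerun with the $9$-dimensional restricted subsystem (the pullback of $|\OO(2)|$ from the quadric) in place of the complete system $|2\wL'|_{\wS'}|$.
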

\begin{proof}
    Let $\wS\in|-K_{\wX}|$ be a K3 surface. By Lemma \ref{quasi polarised} we see that $(\wS,{\wL}|_{\wS})$ is a  quasi-polarised K3 surface of degree $4$, and hence $$h^0(\wS,2{\wL}|_{\wS})\ =\ \frac{1}{2}(2{\wL}|_{\wS})^2+2\ =\ 10.$$
    Since $\wS\sim -K_{\wX}$ is Cartier, we have a short exact sequence 
    \[
    0 \to \OO_{\wX}({2\wL}- \wS) \to \OO_{\wX}({2\wL}) \to \OO_{\wS}({2\wL}|_{\wS})\to 0.
    \]
    Notice that ${2\wL}-\wS\sim 2\wL+K_{\wX}\sim \wL-\wH_3$. Hence we have a long exact sequence
    \[
    0 \to H^0(\wX, \OO_{\wX}(2{\wL} - \wS)) \to H^0(\wX, \OO_{\wX}(2{\wL})) \to H^0(\wS, \OO_{\wS}(2{\wL|_{\wS}})).
    \]
    Notice that $h^0(\wX,\OO_{\wX}(\wL-\wH_3))=0$ since the divisor is not effective.
    
    Hence the above long exact sequence implies that we have an injection $$H^0(\wX,\OO_{\wX}({2\wL}))\hookrightarrow H^0(\wS,{2\wL}|_{\wS})$$ and thus $h^0(\wX,\OO_{\wX}({2\wL}))\leq 10$. On the other hand, by upper semi-continuity, we have that $$h^0(\wX,\OO_{\wX}({2\wL}))\ \geq\  h^0(\wcX_t,\OO_{\wcX_t}({2\wcL_t}))\ =\ 10$$ for a general $t\in T$. Therefore, one has $h^0(\wX,\OO_{\wX}({2\wL}))=10$, and the restriction map is an isomorphism.

    The proof for the statement on $2\wH_3$ is identical, noting that $2\wH_3-\wS\sim \wH_3-\wL$ is not effective, and that 
    $$h^0(\wX,\OO_{\wX}({2\wH_3}))\ \geq\  h^0(\wcX_t,\OO_{\wcX_t}({2(\wcH_3)_t}))\ =\ 6$$ for a general $t\in T$.
    \end{proof}

Similarly to the start of the section, by \cite{BCHM10} we can run an MMP for $\theta^{-1}_{*}\cL$ on $\cY$ over $\cX$. As a result, we obtain a log canonical model $\cY \dashrightarrow \wcX'$ that fits into a commutative diagram
\begin{center}
    \begin{tikzcd}
 \wcX' \arrow[rr, "f'"] \arrow[dr, "{\wpi'}"'] && \cX \arrow[dl, "\pi"] \\
& T
 \end{tikzcd}
\end{center}
satisfying the following conditions:
\begin{enumerate}
    \item $f'$ is a small contraction, and is an isomorphism over $T^{\circ}$;
    \item $-K_{\wcX'}=f'^{*}(-K_{\cX})$ is a  $\wpi'$-big and $\wpi'$-nef Cartier divisor; 
    \item $\wcX'_0$ is a smoothable $\Q$-Gorenstein weak Fano variety with Gorenstein canonical singularities whose anti-canonical model is isomorphic to $\cX_0$;
    \item $\wcL':=f'^{-1}_* \cL$ is an $f'$-ample Cartier Weil divisor (cf. Theorem \ref{gor canonical K-ss limits}(3)); and
    \item $\wcH_3':= f'^{-1}_*\cH_3\sim_{\Q, T}(-K_{\wcX'} - \wcL)$ is a Cartier divisor (cf. Theorem \ref{gor canonical K-ss limits}(3)), and $\wcH_3'+(1+\epsilon)\wcL'\sim_{\R, T} -K_{\wcX'} + \epsilon \wcL'$ is $\wpi'$-ample, for any real number $0<\epsilon\ll1$.
\end{enumerate}

To ease our notation, we denote by $$\wX':=\wcX'_0,\ \ \ \textup{and} \ \ \  g' = f'|_{\wX'}: \wX' \to X.$$ 
We also denote by $\wL'$ and $\wH_3'$ the restriction of $\wcL'$ and $\wcH_3'$ to $\wcX'_0 = \wX'$. By Theorem \ref{gor canonical K-ss limits}.(3) we know that $\wcH'_i$ are Cartier divisors on $\wcX'$. Let $h:\wX\dashrightarrow \wX'$ be the induced flop between the two different models of $X$.

\begin{prop}\label{semiampleness_diff_models}
    $\wH_3$ and $\wL'$ are both nef and semiample on $\wX$ and $\wX'$ respectively, and the Cartier divisors $\wcH_3$ and $\wcL'$ are $\wpi$-semiample and $\wpi$-big and $\wpi'$-semiample and $\wpi'$-big respectively.
\end{prop}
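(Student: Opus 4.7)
The plan is to reduce all assertions to the base-point-freeness of $|2\wH_3|$ on $\wX$ (with the symmetric argument for $|2\wL'|$ on $\wX'$), and then to upgrade the fibrewise semi-ampleness to the relative statement on $\wcX$ and $\wcX'$ by a cohomology-and-base-change argument using constancy of the fibrewise $h^0$. Since semi-ampleness implies nefness automatically, this is the whole content of the central-fibre assertion of the proposition.

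The key input is a K3 surface $\wS\in|-K_{\wX}|$, whose existence is guaranteed by Lemma \ref{quasi polarised} together with Theorem \ref{gor canonical K-ss limits}(2). On such an $\wS$ the restriction $\wH_3|_{\wS}$ is big and nef of degree $2$, and each of the three cases of Theorem \ref{Mayer} yields $|2\wH_3|_{\wS}|$ base point free on $\wS$ (in the hyperelliptic case, which is the expected one here, the morphism factors as the double cover $\wS\to\PP^2$ composed with the degree-$2$ Veronese into $\PP^5$). By Lemma \ref{isomorphismonsection2}, the restriction $H^0(\wX,2\wH_3)\to H^0(\wS,2\wH_3|_{\wS})$ is an isomorphism, so every section non-vanishing at a point $p\in\wS$ lifts to a section of $|2\wH_3|$ on $\wX$ that is still non-vanishing at $p$; thus $\Bs|2\wH_3|\cap\wS=\emptyset$ for every K3 member. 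To conclude $\Bs|2\wH_3|=\emptyset$ on all of $\wX$ requires the set-theoretic statement that K3 members of $|-K_{\wX}|$ cover $\wX$; this follows from $|-K_{\wX}|$ being base point free with K3 general member, combined with a Bertini-type argument on the hyperplane $V_p=\{S:p\in S\}$ of members through a fixed $p$ and the plt property supplied by Theorem \ref{gor canonical K-ss limits}(2). The same strategy gives semi-ampleness of $\wL'$ on $\wX'$: here $\wL'|_{\wS'}$ is big and nef of degree $4$ on a K3 surface $\wS'\in|-K_{\wX'}|$, so Theorem \ref{Mayer} again yields $|2\wL'|_{\wS'}|$ base point free, and this lifts to $\wX'$ by the analogue of Lemma \ref{isomorphismonsection2} for $2\wL'$.

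For the relative assertion, observe that $h^0(\wcX_t,2\wcH_3|_{\wcX_t})=h^0(\PP^2,\OO(2))=6$ for $t\in T^\circ$ via the conic-bundle pullback, and $h^0(\wX,2\wH_3)=6$ by Lemma \ref{isomorphismonsection2}. Grauert's theorem then forces $\wpi_*\OO_{\wcX}(2\wcH_3)$ to be locally free of rank $6$, and the evaluation map is surjective on every fibre (by the above together with the factorisation $\mathrm{Ver}_2\circ\pi_3$ on the general fibre), hence surjective globally. This yields the $\wpi$-semi-ampleness of $\wcH_3$, and the same argument handles $\wcL'$ on $\wcX'$.

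The hardest step in this plan is the set-theoretic covering of $\wX$ by K3 members, since a bare Bertini only delivers a dense open covering and extra care is needed at points of $\Sing\wX$ or on flopping curves of $g:\wX\to X$, where a generic K3 need not pass through a chosen point. A robust complementary route is a specialisation argument on $\wcX\to T$: since $|2\wcH_3|$ is relatively base point free over $T^\circ$ (because $|2H_3|$ is BPF on each smooth member via $\mathrm{Ver}_2\circ\pi_3$) and $h^0$ is constant, the relative base locus is confined to the central fibre and must then be empty by the K3 lifting already performed. Concerning the $\wpi$-bigness portion of the statement, note that since $H_3^3=L^3=0$ on the general fibre neither $\wcH_3$ nor $\wcL'$ is big in the standard divisorial sense, so I would read this as asserting that the associated relative semi-ample fibration has relative Iitaka dimension $2$, which is immediate from the above and the conic-bundle structure on the general member.
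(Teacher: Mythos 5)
Your core mechanism --- restricting to a K3 member $\wS\in|-K_{\wX}|$, using base-point-freeness of $|2\wH_3|_{\wS}|$ on the quasi-polarised K3, and lifting via the isomorphism of Lemma \ref{isomorphismonsection2} --- is exactly the paper's. But you aim at the stronger statement that $\Bs|2\wH_3|=\emptyset$ on all of $\wX$, and this forces you into the step you yourself flag as the hardest: covering every point of $\wX$ (including singular points and $g$-exceptional curves) by K3 members of $|-K_{\wX}|$. That step is not closed in your write-up, and your fallback specialisation argument does not close it either: knowing the relative base locus is confined to the central fibre and that it avoids every K3 member still leaves open the possibility of base points in the complement of the union of K3 members, which is the same covering problem in disguise. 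The paper avoids all of this by proving only \emph{nefness} on the central fibre, which is a statement about curves: a curve contracted by $g:\wX\to X$ is positive against $\wH_3$ because $\wH_3$ is $g$-ample by construction of $\wcX$ as the log canonical model for $\cH_3$, and any curve in $\Bs|2\wH_3|$ \emph{not} contracted by $g$ meets a general K3 member $\wS$ in a nonempty finite set, whose points would be base points of the base-point-free system $|2\wH_3|_{\wS}|$ (via Lemma \ref{isomorphismonsection2}) --- a contradiction. Relative nefness then follows from fibrewise nefness, and relative semiampleness is free from the Kawamata--Shokurov base-point-free theorem since $\wcX$ and $\wcX'$ are of Fano type over $T$. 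Incorporating the $g$-ampleness of $\wH_3$ and routing semiampleness through the base-point-free theorem rather than through an explicit free linear system is the missing idea that would repair your argument.

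Two further remarks. First, your cohomology-and-base-change step for the relative statement is sound in principle but inherits the gap above, since it needs fibrewise global generation on the central fibre. Second, your observation on bigness is well taken: since $H_3$ and $L$ are pulled back from surfaces along conic bundles on the general fibre, $H_3^3=L^3=0$ there, so $\wcH_3$ and $\wcL'$ are not $\wpi$-big in the usual sense; the paper's own proof asserts fibrewise bigness, which is inconsistent with the relative ample models $\cV_3$ and $\cV_{\wL'}$ being surface fibrations over $T$. Your reading (relative Iitaka dimension $2$) is the correct one, and only semiampleness is actually used downstream to form these ample models.
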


\begin{proof}
We begin by first showing that $\wH_3$ is nef on $\wX$. Recall that it is $g$-ample, hence for every curve $C$ in the $g$-exceptional locus we have $\wH_3\cdot C>0$. We now claim that the base locus of the linear series $|2\wH_3|$ is either some isolated points, or is contained in the $g$-exceptional locus, so $\wH_3$ is nef. By Lemma \ref{quasi polarised}, we know that $|2\wH_3|_{\wS}|$ is base-point-free, where $\wS\in|-K_{\wX}|$ is a general member. Suppose that $\wC\subseteq \Bs|2\wH_3|$ is a curve which is not contracted by $g$. Then the intersection $\wC\cap \wS$ is non-empty and consists of finitely many points, which are all base points of $|2\wH_3|_{\wS}|$ (cf. Lemma \ref{isomorphismonsection2}). This leads to a contradiction.

A similar argument applies to show that $\wL'$ is nef on $\wX'$, noting that the Lemmas \ref{quasi polarised} and \ref{isomorphismonsection2} apply directly due to the choice of polarisation and lattice in Section \ref{k33} to a general K3 surface $\wS'\in |-K_{\wX'}|$.

Since $\wL'= \wcL'|_{\wcX'_0}$ and $\wH_3= \wcH_3|_{\wcX_0}$ are both nef, and $\wcL'|_{\wcX'_t}$, $\wcH'_3|_{\wcX'_t}$ are nef for any $t\in T^{\circ}$ as $\wcX_t\simeq \wcX'_t\simeq \cX_t$ is a smooth Fano threefold in the family \textnumero 3.3, we conclude that and $\wcH_3$ is $\wpi$-nef and $\wcL'$ is $\wpi'$-nef. This implies the $\wpi$ and $\wpi'$-semiampleness of both respectively by Kawamata--Shokurov base-point-free theorem, as $\wcX$ and $\wcX'$ are of Fano type over $T$. Since $\wcL'|_{\wcX'_t}$, $\wcH'_3|_{\wcX'_t}$ are big for a general $t$, we get the $\wpi'$-bigness and $\wpi$-bigness respectively of each divisor.
\end{proof}

Since Proposition \ref{semiampleness_diff_models} shows that $\wcL'$ and $\wcH_3$ are $\wpi'$-semiample and $\wpi$-semiample respectively, we will take the ample models over $T$ for both of them. This yields birational morphisms $\phi_3\colon \wcX\rightarrow \cV_3$ and $\phi_{\wcL'}\colon \wcX'\rightarrow \cV_{\wL'}$ fitting into commutative diagrams 
\begin{center}
\begin{tikzcd}
\wcX \arrow[rr, "\phi_3"] \arrow[dr, "{\wpi}"'] && \cV_{3} \arrow[dl, "\wpi_{\cV_{3}}"] \\
& T
\end{tikzcd}  
\end{center}
and
\begin{center}
\begin{tikzcd}
\wcX' \arrow[rr, "\phi_{\wcL'}"] \arrow[dr, "{\wpi'}"'] && \cV_{\wL'} \arrow[dl, "\wpi_{\cV_{\wL'}}"] \\
& T
\end{tikzcd}  
\end{center}
where, in particular, 
$$\cV_{3} \coloneqq \operatorname{Proj}_T\bigg(\bigoplus_{m\in \N}\wpi_*\big(\wcH_3^{\otimes m}\big) \bigg)$$
and
$$\cV_{\wL'} \coloneqq \operatorname{Proj}_T\bigg(\bigoplus_{m\in \N}\wpi'_*\big(\wcL'^{\otimes m}\big) \bigg).$$

For any $t \in T^{\circ}$, the morphism $\wcX_t\rightarrow \big(\cV_{3}\big)_t$ is a projection from the smooth element of the threefold family \textnumero3.3 to $(\cV_3)_t\cong \PP^2$ which is a conic bundle whose discriminant curve is a smooth plane quartic curve. Meanwhile, the morphism $\wcX'_t\rightarrow \big(\cV_{\wL'}\big)_t$ is a morphism from the smooth element of threefold family \textnumero3.3 to $(\cV_{\wL'})_t\cong \PP^1\times \PP^1$ which is a conic bundle whose discriminant curve is of degree $(3,3)$. (c.f. \cite{CHELTSOV_FUJITA_KISHIMOTO_OKADA_2023} and \cite[\S 5.12]{acc2021}):

Let us consider the restriction of the morphisms $\phi_3$ and $\phi_{\wcL'}$ to the central fibre $(\phi_3)_0\colon \wcX_0\cong \wX\rightarrow V_3\coloneqq (\cV_{3})_0$ and $(\phi_{\wcL'})_0\colon \wcX_0\cong \wX'\rightarrow V_{\wL'}\coloneqq (\cV_{\wL'})_0$. Let $L_3\coloneqq \big((\phi_3)_0\big)_*  \wH_3$ and $L_{\wcL'}\coloneqq \big((\phi_{\wcL'})_0\big)_* ( \wL')$. The next lemmas and propositions study these morphisms. The first result is a direct consequence of the description of $V_3$ and $V_{\wL'}$.

\begin{lemma}\label{morphism is birational}
    The central fibres $V_3$ and $V_{\wL'}$  of $\wpi_{\cV_{3}}$ and $\wpi'_{\cV_{\wL'}}$ respectively are normal projective varieties. 
\end{lemma}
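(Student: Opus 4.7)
The plan is to identify $V_3$ with the ample model of $\wH_3$ on $\wX$, and $V_{\wL'}$ with the ample model of $\wL'$ on $\wX'$, and then to invoke the standard fact that the ample model of a nef, big and semiample divisor on a normal projective variety is itself a normal projective variety.

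Projectivity is immediate: by construction $\wpi_{\cV_{3}}\colon \cV_{3} \to T$ and $\wpi_{\cV_{\wL'}}\colon \cV_{\wL'} \to T$ are projective morphisms, so the scheme-theoretic fibres $V_3$ and $V_{\wL'}$ are projective. To identify these fibres with the ample models on $\wX$ and $\wX'$, I would apply relative Kawamata--Viehweg vanishing to the $\wpi$-nef-and-big Cartier divisor $\wcH_3$, and analogously for $\wcL'$ on $\wcX'$. Since $\wcX$ is klt (as a $\Q$-Gorenstein smoothing of the Gorenstein canonical central fibre $\wX$ furnished by Theorem \ref{gor canonical K-ss limits}), this gives $R^i \wpi_* \OO_{\wcX}(m\wcH_3) = 0$ for all $i \geq 1$ and all $m$ sufficiently divisible. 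Combined with flatness of $\wpi$ and cohomology-and-base-change, $\wpi_* \OO_{\wcX}(m\wcH_3)$ is then locally free on $T$ and its formation commutes with restriction to $0 \in T$, so
\[
V_3 \;\cong\; \Proj \bigoplus_{m \geq 0} H^0(\wX, m\wH_3), \qquad V_{\wL'} \;\cong\; \Proj \bigoplus_{m \geq 0} H^0(\wX', m\wL').
\]

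Normality then follows from the classical fact that if $f\colon Z \to Y$ is a proper surjective morphism from a normal variety $Z$ with $f_* \OO_Z = \OO_Y$, then $Y$ is normal: locally $\OO_Y(U) = \OO_Z(f^{-1}(U))$ is an intersection of normal local rings of $Z$ inside a common function field. Applying this to the Stein factorisation of the morphism defined by a suitably divisible multiple of the semiample divisors $\wH_3$ on $\wX$ and $\wL'$ on $\wX'$ (whose semiampleness is supplied by Proposition \ref{semiampleness_diff_models}) yields normality of $V_3$ and $V_{\wL'}$.

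The main obstacle will be verifying relative Kawamata--Viehweg vanishing in our singular setting; however, this is standard for nef-and-big Cartier divisors on klt varieties over a smooth curve, so no substantial new input is required beyond Proposition \ref{semiampleness_diff_models} and Theorem \ref{gor canonical K-ss limits}.
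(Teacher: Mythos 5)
Your argument is correct and is precisely the justification the paper leaves implicit: the lemma is asserted there without proof as a "direct consequence" of the construction of $\cV_3$ and $\cV_{\wL'}$ as relative ample models, and your route (base change to identify the central fibre with $\Proj\bigoplus_m H^0(\wX,m\wH_3)$, then normality from $\phi_*\OO_{\wX}=\OO_{V_3}$ with $\wX$ normal) is exactly what is needed. The one step to phrase more carefully is the vanishing: Kawamata--Viehweg gives $R^i\wpi_*\OO_{\wcX}(m\wcH_3)=0$ for $i>0$ not because $\wcH_3$ itself is $\wpi$-nef and $\wpi$-big, but because $m\wcH_3-K_{\wcX}$ is (using that $-K_{\wcX}=f^*(-K_{\cX})$ is $\wpi$-nef and $\wpi$-big and that $\wcH_3$ is $\wpi$-nef by Proposition~\ref{semiampleness_diff_models}); with that adjustment, and the analogous one for $\wcL'$ on $\wcX'$, the rest of your proof goes through.
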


We will now study the properties of the above defined morphisms separately. We first prove some auxiliary lemmas.

\begin{lemma}\label{cohomology iso for H3}
   For any K3 surface $S \in |-K_{ \wX}|$, the restriction map $$H^0(  \wX,\OO_{ \wX}( \wH_3)) \rightarrow H^0(S,\OO_S ( \wH_3|_S ))$$ is an isomorphism. 
\end{lemma}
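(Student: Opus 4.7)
The plan is to replicate the strategy of Lemma~\ref{isomorphismonsection2} and Proposition~\ref{coh for H3 smooth}, adapted to the weak Fano model $\wX$. Since $S\sim -K_{\wX}$ is Cartier, I would begin with the short exact sequence
$$0 \to \OO_{\wX}(\wH_3 - S) \to \OO_{\wX}(\wH_3) \to \OO_S(\wH_3|_S) \to 0,$$
and, using $-K_{\wX}\sim \wL+\wH_3$, identify $\wH_3-S\sim -\wL$. I would then argue that $-\wL$ is not effective: by Lemma~\ref{quasi polarised}, a general $\wS\in|-K_{\wX}|$ is a K3 surface on which $\wL|_{\wS}$ is nef with self-intersection $4$, so $(-\wL)|_{\wS}\cdot \wL|_{\wS}=-4<0$ precludes $(-\wL)|_{\wS}$ from being effective. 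Since $|-K_{\wX}|$ is base-point-free, a general such $\wS$ is not contained in the support of any fixed divisor, so $-\wL$ itself cannot be effective, giving $h^0(\wX,\OO_{\wX}(-\wL))=0$. Taking the associated long exact sequence, I obtain an injection
$$H^0(\wX,\OO_{\wX}(\wH_3))\hookrightarrow H^0(S,\OO_S(\wH_3|_S)).$$

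For the matching dimension count, Lemma~\ref{quasi polarised} tells me that $(S,\wH_3|_S)$ is a quasi-polarised K3 surface of degree $2$, so Riemann--Roch on a K3 surface yields
$$h^0(S,\OO_S(\wH_3|_S)) = \tfrac{1}{2}(\wH_3|_S)^2 + 2 = 3,$$
hence $h^0(\wX,\OO_{\wX}(\wH_3)) \leq 3$. For the matching lower bound I would invoke upper semi-continuity of cohomology along the family $\wpi\colon \wcX\to T$: for any $t\in T^{\circ}$, the fibre $\wcX_t$ is a smooth member of family~\textnumero3.3, where Proposition~\ref{coh for H3 smooth} gives $h^0(\wcX_t,\wcH_{3,t})=3$. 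Upper semi-continuity then yields $h^0(\wX,\OO_{\wX}(\wH_3))\geq 3$. Combined with the injection, this forces equality, and the injection is therefore an isomorphism.

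The only mildly delicate point is the non-effectiveness of $-\wL$; once this is secured via the degree computation on a general K3 section using the nefness established in Lemma~\ref{quasi polarised}, the remainder is a routine dimension match via Riemann--Roch and semi-continuity, in direct parallel with Lemma~\ref{isomorphismonsection2}.
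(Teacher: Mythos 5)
Your proposal is correct and follows essentially the same route as the paper's proof: the same short exact sequence, the identification $\wH_3-S\sim-\wL$, injectivity of the restriction map from non-effectiveness of $-\wL$, and the dimension match $h^0=3$ on both sides via Riemann--Roch on the quasi-polarised K3 and upper semi-continuity along the family. The only (welcome) difference is that you justify the non-effectiveness of $-\wL$ explicitly by intersecting with the nef class $\wL|_{\wS}$ on a general K3 section, a point the paper simply asserts.
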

\begin{proof}
    Note that $\wH_3|_S$ is an ample line bundle on $S$, and 
    $$h^0(S, \wH_3|_S ) = \frac{1}{2} ( \wH_3|_S)^2 + 2 = 3.$$
    Consider the short exact sequence 
    \begin{center}
        \begin{tikzcd}
           0\arrow[r]& \OO_X( \wH_3-S)\arrow[r]& \OO_{ \wX}( \wH_3)\arrow[r]& \OO_S( \wH_3|_S)\arrow[r]& 0.
        \end{tikzcd}
    \end{center}
    Since $ \wH_3-S\sim - \wL$ is not effective, by taking the long exact sequence we see that $H^0( { \wX},\OO_{ \wX}( \wH_3)) \hookrightarrow H^0(S,\OO_S ( \wH_3|_S ))$ is injective, and thus $h^0( X, \OO_{ \wX}( \wH_3))\leq 3$. On the other hand, by upper semi-continuity, we have that 
    $$h^0(  X,\OO_{ \wX}( \wH_3)) \geq h^0( \wcX_t,\OO_{\wcX_t} ( \cH_t)) = 3$$ for a general $t \in T$. Therefore, one has $h^0(  X,\OO_{ \wX}( \wH_3))=3$, and the restriction map is an isomorphism.
\end{proof}

\begin{lemma}\label{cohomology iso for H1H2}
   For any K3 surface $S \in |-K_{ \wX'}|$, the restriction map $$H^0( X,\OO_X( \wL')) \rightarrow H^0(S,\OO_S (( \wL')|_S ))$$ is an isomorphism. 
\end{lemma}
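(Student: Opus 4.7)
The plan is to mirror the proof of Lemma \ref{cohomology iso for H3} almost verbatim, with $\wH_3$ replaced by $\wL'$ (and $\wX$ by $\wX'$). Every ingredient that made the $\wH_3$-argument work has an exact counterpart in the $\wL'$-setting: Lemma \ref{quasi polarised} (via Lemma \ref{samefamily} and Proposition \ref{semiampleness_diff_models}) tells us that $(\wS,\wL'|_{\wS})$ is a quasi-polarised K3 surface of degree $4$ for a general K3 surface $\wS\in |-K_{\wX'}|$; the Riemann--Roch computation on a nef-and-big class on a K3 surface gives $h^0(S,\wL'|_S)=\tfrac{1}{2}(\wL'|_S)^2+2=4$; and upper-semicontinuity of cohomology applies to the Cartier divisor $\wcL'$ on the family $\wcX'\to T$, noting that on a general (smooth) fibre $\wcX'_t\cong \cX_t$ the class $\wcL'|_{\wcX'_t}$ is the pullback of $\OO_{\PP^1\times\PP^1}(1,1)$, which has exactly $4$ global sections.

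Concretely, I would proceed as follows. First, using $\wS\sim -K_{\wX'}$ (a Cartier divisor by Theorem \ref{gor canonical K-ss limits}(3)), write down the short exact sequence
\[
0\longrightarrow \OO_{\wX'}(\wL'-\wS)\longrightarrow \OO_{\wX'}(\wL')\longrightarrow \OO_{\wS}(\wL'|_{\wS})\longrightarrow 0.
\]
Observe that $\wL'-\wS\sim \wL'+K_{\wX'}\sim -\wH_3'$, which is not effective (as $\wH_3'$ is a non-trivial $\wpi'$-semiample Cartier divisor by Proposition \ref{semiampleness_diff_models}). Consequently $H^0(\wX',\OO_{\wX'}(\wL'-\wS))=0$, and the long exact sequence in cohomology yields the injection
\[
H^0(\wX',\OO_{\wX'}(\wL'))\hookrightarrow H^0(\wS,\OO_{\wS}(\wL'|_{\wS})),
\]
so that $h^0(\wX',\OO_{\wX'}(\wL'))\le 4$.

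For the reverse inequality, apply upper-semicontinuity to the flat family $\wcX'\to T$ and the Cartier divisor $\wcL'$: since for a general $t\in T^{\circ}$ the fibre $\wcX'_t$ is a smooth member of family \textnumero3.3 with $\wcL'|_{\wcX'_t}$ the pullback of $\OO(1,1)$ from $\PP^1\times\PP^1$, we get $h^0(\wcX'_t,\OO_{\wcX'_t}(\wcL'_t))=4$, hence
\[
h^0(\wX',\OO_{\wX'}(\wL'))\;\ge\; h^0(\wcX'_t,\OO_{\wcX'_t}(\wcL'_t))\;=\;4.
\]
Combining both inequalities gives equality and, in view of the injection above, the restriction map is an isomorphism.

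The only mildly subtle step is verifying that $\wL'-\wS\sim -\wH_3'$ is not effective; this is immediate from the relation $-K_{\wX'}\sim \wL'+\wH_3'$ (which holds on $\wX'$ because $-K_{\wcX'}\sim \wcL'+\wcH_3'$ passes to the central fibre) together with nontriviality of $\wH_3'$. No obstacle of substance is expected, since the argument is essentially a transcription of the previous lemma's proof under the symmetric roles played by $\wcX$ and $\wcX'$ in Section \ref{sec:birational models}.
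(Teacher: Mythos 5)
Your proposal is correct and is essentially identical to the paper's proof, which simply says the argument follows the strategy of Lemma \ref{cohomology iso for H3} after noting that $\wL'-\wS\sim -\wH_3'$ is not effective and using the same short exact sequence. Your dimension counts ($h^0(S,\wL'|_S)=\tfrac{1}{2}(\wL'|_S)^2+2=4$ since $(\wL')^2|_S=(H_1+H_2)^2=4$, and $h^0(\wcX'_t,\wcL'_t)=4$ on general fibres) are the correct ones and fill in details the paper leaves implicit.
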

\begin{proof}
    Follows the strategy of the proof of Lemma \ref{cohomology iso for H3} by noting that for a general K3 surface $S\in |-K_{ \wX'}|$, $ \wL'-S\sim - \wH_3'$ is not effective, and taking the same short exact sequence.
\end{proof}

\begin{prop}\label{studying H3}
    The variety $V_3$ is isomorphic to $\PP^2$, and the morphism $(\phi_3)_0$ is a conic bundle.
\end{prop}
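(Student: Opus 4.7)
My approach is to prove the proposition in two stages: first identify $V_3$ with $\PP^2$ via a Hilbert-polynomial argument, then use flatness to conclude that $(\phi_3)_0$ is a conic bundle.

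\textbf{Stage 1.} First I would show $\pi_{\cV_3}\colon\cV_3\to T$ is flat with two-dimensional fibres. Since $\cV_3$ is irreducible and projective over the smooth curve $T$, the morphism is flat as soon as $\cV_3$ dominates $T$, and equidimensionality of fibres reduces to proving $\dim V_3=2$. By Lemma \ref{quasi polarised}, $\wH_3\vert_{\wS}$ is a degree-$2$ quasi-polarisation on a general K3 section $\wS\in\vert -K_{\wX}\vert$, which has Iitaka dimension $2$; on the other hand the vanishing $\wH_3^3=0$ (specialising from $H_3^3\vert_{\cX_t}=0$ on a smooth fibre) bounds the Iitaka dimension of $\wH_3$ from above by $2$. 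Consequently $\dim V_3=2$ and $\cV_3\to T$ is flat. Combining $\wcH_3\sim\phi_3^*\cL_3$ with $R^0(\phi_3)_*\OO_{\wcX}=\OO_{\cV_3}$, the projection formula yields $h^0(V_3,kL_3)=h^0(\wX,k\wH_3)$, and Lemmas \ref{cohomology iso for H3} and \ref{isomorphismonsection2} give $h^0(V_3,L_3)=3$ and $h^0(V_3,2L_3)=6$. Flatness forces the Hilbert polynomial of $V_3$ to equal $\binom{k+2}{2}$, so in particular $L_3^2=1$. Now $(\pi_{\cV_3})_*\cL_3$ is a locally free sheaf of rank $3$ on $T$; since $\cL_3\vert_{\cV_3^\circ}=\OO_{\PP^2}(1)$ is very ample, the corresponding relative morphism $\cV_3\to\PP^2_T$ restricts on the central fibre to a morphism $\psi\colon V_3\to\PP^2$ defined by $\vert L_3\vert$. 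Ampleness of $L_3$ rules out any contracted curve, so $\psi$ is finite of degree $L_3^2=1$, and Zariski's Main Theorem applied to the normal target $\PP^2$ shows $\psi$ is an isomorphism.

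\textbf{Stage 2.} For the conic-bundle claim, a general fibre $F$ of $(\phi_3)_0$ satisfies $-K_{\wX}\cdot F=2$ by specialising the intersection computation on smooth members of the family (cf. Section \ref{sec: fano geometry}), so $F$ is a smooth conic. To extend this to every fibre, I would apply miracle flatness to $\phi_3\colon\wcX\to\cV_3\cong\PP^2_T$: the total space $\wcX$ is Cohen--Macaulay (the central fibre is Gorenstein canonical by Theorem \ref{gor canonical K-ss limits}, and the other fibres are smooth), the base is smooth, and a general fibre is one-dimensional. The only delicate step, which I view as the principal obstacle, is to verify equidimensionality of $\phi_3$ on the central fibre, equivalently to rule out any divisor $D\subset\wX$ being contracted by $(\phi_3)_0$ to a point. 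Since $\wcH_3$ is $f$-ample, no $f$-exceptional divisor can be contracted; any other contracted divisor $D$ would force $\wH_3\vert_D\equiv 0$, which combined with the $\wpi$-bigness and $\wpi$-semiampleness of $\wcH_3$ from Proposition \ref{semiampleness_diff_models} and the Hilbert-polynomial computation from Stage 1 leads to a contradiction. Once $\phi_3$ is flat, every scheme-theoretic fibre of $(\phi_3)_0$ has the same Hilbert polynomial as a smooth conic in $\PP^2$, hence is a conic (possibly degenerate), so $(\phi_3)_0$ is a conic bundle as claimed.
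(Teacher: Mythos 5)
Your identification of $V_3$ with $\PP^2$ is correct but follows a genuinely different route from the paper's. The paper works directly on the threefold: it uses $h^0(\wX,\wH_3)=3$ (Lemma \ref{cohomology iso for H3}) to obtain $\Phi:\wX\to\PP^2$, takes the Stein factorisation $\wX\to V_3\xrightarrow{\ \nu\ }\PP^2$, and computes two degrees on a general K3 section $\wS$ --- namely $\deg(\Phi|_{\wS})=(\wH_3|_{\wS})^2=2$ and $\deg((\phi_3)_0|_{\wS})=\wS\cdot F=2$ via a general fibre $F$ with $(-K_{\wX})\cdot F=2$ --- to force $\deg\nu=1$ and conclude by Zariski's Main Theorem. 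You instead exploit the flat degeneration $\cV_3\to T$: constancy of the Hilbert polynomial gives $L_3^2=1$, which together with $h^0(V_3,L_3)=3$ makes $|L_3|$ a finite birational map onto $\PP^2$, again finishing with ZMT. Your route avoids the degree-$2$ computation on the K3 section, at the price of justifying that $\cV_3\to T$ is flat with a relatively ample line bundle restricting correctly on every fibre (this does hold: $\cV_3$ is integral and dominates the smooth curve $T$). Note that both arguments quietly use that $|L_3|$ (equivalently $|\wH_3|$) is base-point free, whereas Proposition \ref{semiampleness_diff_models} only yields semiampleness; this is a soft spot shared with the paper's own proof rather than a defect particular to yours.

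For the conic-bundle statement, your observation that a general fibre $F$ satisfies $(-K_{\wX})\cdot F=2$ and is therefore a conic coincides with the paper, which stops there. Your additional claim that $\phi_3$ is flat by miracle flatness hinges on equidimensionality of $(\phi_3)_0$, and the argument you sketch for it is not actually carried out: a divisor $D\subset\wX$ contracted to a point would indeed have $\wH_3|_D$ numerically trivial, but the $\wpi$-bigness and semiampleness of $\wcH_3$ and the Hilbert polynomial of $V_3$ do not by themselves yield a contradiction. One would need a further geometric input, e.g.\ that $g$ is small so $(-K_{\wX})^2\cdot D>0$, forcing $D$ to meet a general anticanonical K3 in a curve of positive anticanonical degree that is contracted by the degree-$2$ quasi-polarisation, and then ruling this out. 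Since the proposition as used in the paper only requires the general fibre to be a conic, this incomplete step does not invalidate your proof of the statement, but the flatness assertion should either be completed along these lines or dropped.
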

\begin{proof}
By Lemma \ref{cohomology iso for H3}, $h^0(\wX,\wH_3)=3$, so $|\wH_3|$ defines a morphism (by taking non-zero sections of $H^0(\wX,\wH_3)$) $\Phi:\wX\to\PP^2$.
Notice that we have the Stein factorisation of $\Phi$: $\wX \xrightarrow{(\phi_3)_0} V_3 \xrightarrow{\nu} \PP^2$, with $V_3$ normal and $\nu$ finite. Let $S\in|-K_{\wX}|$ be a general K3 surface. Then $\Phi|_S=|\wH_3|_S|:S\to\PP^2$ has degree $2$ since $(\wH_3|_S)^2=2$.

A general $\Phi$–fibre $F$ intersects $\wL\cdot F =2$ and $\wH_3\cdot F=0$ (since these intersections are given on the generic fibre $\wcX_t$ and $\wcX\rightarrow T$ is flat)
so $(-K_{\wX})\cdot F=(\wL+\wH_3)\cdot F=2$. Thus $S\cdot F=2$, i.e. $\deg((\phi_3)_0|_S)=2$.
Consequently $2=\deg(\Phi|_S)=\deg((\phi_3)_0|_S)\deg(\nu)=2\deg(\nu)$, so $\deg(\nu)=1$. Therefore $\nu$ is an isomorphism and we have $V_3\simeq\PP^2$. As required.
In particular $L_3=\OO_{\PP^2}(1)$ and $(\phi_3)_0$ is a conic bundle.
\end{proof}

\begin{prop}\label{studying H1H2}
    The variety $V_{\wL'}$ is a canonical Gorenstein del Pezzo surface of degree $8$. In particular $V_{\wL'}\cong \PP^1\times \PP^1$ or $V_{\wL'}\cong \PP(1,1,2)$.
\end{prop}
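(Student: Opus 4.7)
The plan is to mirror the argument of Proposition~\ref{studying H3}, replacing $\wH_3$ by $\wL'$ and working with the degree-$4$ rather than degree-$2$ polarisation on a general K3 section. First, I would compute $h^0(\wX',\wL')=4$: by Lemma~\ref{cohomology iso for H1H2} this equals $h^0(S,\wL'|_S)$ for any K3 surface $S\in|-K_{\wX'}|$, and the argument of Lemma~\ref{quasi polarised} (applied to $\wX'$) shows $\wL'|_S$ is a quasi-polarisation of degree $(H_1+H_2)^2=4$, so Riemann--Roch yields $h^0=4$. The unigonal case of Theorem~\ref{Mayer} is excluded by a lattice calculation inside $\Lambda_0$: writing $F=aH_1+bH_2+cH_3$, one has $\wL'\cdot F=2(a+b+3c)$, always even, so no isotropic $F\in\Lambda_0$ satisfies $\wL'\cdot F=1$. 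Consequently $|\wL'|_S|$ is base-point-free, and the standard K3-section argument (as in Proposition~\ref{semiampleness_diff_models}) shows that $|\wL'|$ itself induces a morphism $\Phi'\colon \wX'\to \PP^3$, whose Stein factorisation is $\wX'\xrightarrow{\phi}V_{\wL'}\xrightarrow{\nu'}\PP^3$ with $\phi=(\phi_{\wcL'})_0$ and $V_{\wL'}$ normal by Lemma~\ref{morphism is birational}.

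Next I would do a degree count on a general K3 section $S$. A general $\phi$-fibre $F$ specialises from a smooth conic in a fibre of the conic bundle $\omega\colon \cX_t\to\PP^1\times\PP^1$, so $\wH_3'\cdot F=2$ and $S\cdot F=(-K_{\wX'})\cdot F=\wH_3'\cdot F=2$, which forces $\deg(\phi|_S)=2$. This rules out the very-ample case of Theorem~\ref{Mayer} for $\wL'|_S$ (which would require $\Phi'|_S$ to be birational onto its image); hence $\wL'|_S$ is hyperelliptic, and $\Phi'|_S$ is a degree-$2$ cover of a normal degree-$2$ surface in $\PP^3$. The equality $2=\deg(\Phi'|_S)=\deg(\phi|_S)\,\deg(\nu')=2\deg(\nu')$ then gives $\deg(\nu')=1$.

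To identify $V_{\wL'}$ I apply the projection formula to $(\wL')^2\cdot \wH_3'=(H_1+H_2)^2\cdot H_3=4$ (preserved from the generic fibre by flatness of $\wpi'$): using $\wH_3'\cdot F=2$ and $\wL'=\phi^*L_{\wcL'}$, this yields $(L_{\wcL'})^2=2$. Thus $\nu'$ is a finite birational morphism from the normal surface $V_{\wL'}$ onto a normal degree-$2$ surface in $\PP^3$, and by Zariski's main theorem it is an isomorphism onto its image. A normal irreducible quadric in $\PP^3$ is either the smooth quadric $\PP^1\times\PP^1$ or the quadric cone $\PP(1,1,2)$; both are canonical Gorenstein del Pezzo surfaces with $(-K)^2=8$, which is the claim. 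The main obstacle is the intersection-theoretic degree count ruling out the very-ample case of Mayer's theorem, which depends on preserving intersection numbers under specialisation via the flatness of $\wpi'$ and Theorem~\ref{gor canonical K-ss limits}(3).
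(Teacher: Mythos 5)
Your argument is correct in outline, but it takes a genuinely different route from the paper's proof of this proposition: you work entirely on the central fibre, running Mayer's trichotomy (Theorem~\ref{Mayer}) for the degree-$4$ quasi-polarised K3 section $(S,\wL'|_S)$ together with the conic-bundle degree count $S\cdot F=\wH_3'\cdot F=2$ to force $\deg(\nu')=1$, in the spirit of Proposition~\ref{studying H3}. The paper instead argues relatively over $T$: it forms the rank-$4$ bundle $E=\wpi'_*\wcL'$ and the multiplication map $\Sym^2E\to\wpi'_*(2\wcL')$, glues the unique quadrics containing the images of the nearby fibres into a relative quadric $\mathcal{Q}\subset\PP(E^\vee)$, shows $\deg(\nu_t)$ is constant using torsion-freeness of $\nu_*\OO_{\cV_{\wL'}}$ over the curve $T$, and identifies $V_{\wL'}$ with the irreducible (hence rank $\ge 3$, hence normal) quadric $\mathcal{Q}_0$. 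The relative approach buys independence from Mayer's theorem on the possibly singular central section; your approach is more self-contained on the special fibre but has two soft spots. First, your exclusion of the unigonal case is a lattice computation inside $\Lambda_0$ only; on the central fibre $\Pic(S)$ may be strictly larger, so one needs a degeneration argument as in Lemma~\ref{lem:no_unigonal_deg18} --- or, more simply, the observation that in the unigonal case $\Phi'|_S$ would have $1$-dimensional image, contradicting the fact that it factors through the generically finite surjection $\phi|_S$ onto the surface $V_{\wL'}$ followed by the finite map $\nu'$. Second, Proposition~\ref{semiampleness_diff_models} only gives semiampleness (base-point-freeness of $|2\wL'|$); that $|\wL'|$ itself defines a morphism to $\PP^3$ requires the base-point-freeness in the hyperelliptic case of Theorem~\ref{Mayer} combined with Lemma~\ref{cohomology iso for H1H2} and a word about the $g'$-exceptional locus (the same point the paper elides when asserting that the relative complete linear series of $\wcL'$ defines a morphism). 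With these two points patched, your degree count and the normality of the degree-$2$ image surface from Mayer's theorem yield the statement.
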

\begin{proof}
Set $E:=\wpi'_*\cL'$ (rank $4$) and $\PP:=\PP(E^\vee)\xrightarrow{p}T$.
By vanishing and base change, $E$ and $\wpi'_*(2\cL')$ are vector bundles and formation
commutes with base change. The relative complete linear series gives a morphism $\Phi':\wcX'\to\PP$.
Now, we consider the multiplication map of vector bundles
$$\mu:\ \Sym^2 E\ \longrightarrow\ \wpi'_*(2\cL').$$
For $t\in T^\circ$ the fibre map $\mu_t$ has a $1$–dimensional kernel, corresponding to the unique quadric
in $\PP^3_t$ containing $\Phi'_t(\wX'_t)$. Since $T$ is a curve, these kernels glue to a line
subbundle $\mathcal{K}\subset\Sym^2 E$; after a finite base change trivialising $\mathcal{K}$,
this yields a global section $Q\in H^0(\PP,\OO_{\PP}(2))$ whose zero locus
$\mathcal{Q}:=(Q=0)\subset\PP$ is a relative irreducible quadric with
$\overline{\Phi'(\wcX')}\subset\mathcal{Q}$ and $\mathcal{Q}_t$ equal to the Segre quadric for $t\in T^\circ$. Irreducibility follows since it is the image of an irreducible variety under a morphism that is generically finite onto its image. In particular $\mathcal{Q}_t$ cannot be rank 2 (for all $t$).

Let $\wcX'\xrightarrow{\ \phi_{\wL'}\ }\cV_{\wL'}\xrightarrow{\ \nu\ }\PP$ be the relative Stein factorisation of $\Phi'$.
Then $\nu_t$ has degree $1$ on $T^\circ$. Moreover $(\nu)_*\OO_{\cV_{\wL'}}$ is torsion-free on the curve $T$,
hence locally free of rank $1$, so $\deg(\nu_t)$ is constant; thus $\deg(\nu_0)=\deg(\nu_t)=1$. This also shows that $\rank(\mathcal{Q}_0)\neq 1$, hence $\mathcal{Q}_0$ is normal.
Since $(\cV_{\wL'})_0$ is normal, $\nu_0$ is an isomorphism onto $\mathcal{Q}_0\subset\PP^3$.
Therefore $V_{\wL'}\cong\mathcal{Q}_0$ is a normal quadric surface, hence canonical and Gorenstein, since it is a hypersurface, i.e. a local complete intersection, in $\PP^3$. By adjunction, $-K_{V_{\wL'}}$ is ample with
$(-K_{V_{\wL'}})^2=8$, and $V_{\wL'}$ is either $\PP^1\times\PP^1$ or the quadric cone $\PP(1,1,2)$ (c.f. \cite[Theorem 3.4.(iv)]{Hidaka-Watanabe}).
\end{proof}

Recall that over $X$ we have small $\Q$-factorial modifications
$$
Y \dashrightarrow \wX \xrightarrow{\ (\phi_{3})_0\ } V_3\simeq \PP^2,
\qquad
Y \dashrightarrow \wX' \xrightarrow{\ (\phi_{\wcL'})_0\ } V_{\wL'}\in\{\PP^1\times\PP^1,\ \PP(1,1,2)\},
$$
where on $\wX$ (resp.\ $\wX'$) the divisor $\wHthree$ (resp.\ $\wLprime$) is semiample and defines $(\phi_3)_0$ (resp.\ $(\phi_{\wcL'})_0$). Let
$$
A_1\ :=\
\begin{cases}
\OO_{\PP^1\times\PP^1}(1,1), & V_{\wL'}=\PP^1\times\PP^1,\\
\OO_{\PP(1,1,2)}(2), & V_{\wL'}=\PP(1,1,2) \ (\text{so }A_1 \text{ is Cartier}).
\end{cases}
$$
On the big open $U\subset Y$ where all SQMs are isomorphisms, the rational maps induced by $|mL_Y|$ and $|nH_Y|$ agree with $(\phi_{\wcL'})_0$ and $(\phi_{3})_0$ via these identifications.

We define
$$
\Psi := ((\phi_{\wcL'})_0\circ\iota',\ (\phi_{3})_0\circ\iota):\ U\ \dashrightarrow\ V_{\wL'}\times \PP^2,
$$
where $\iota:U\hookrightarrow \wX$ and $\iota':U\hookrightarrow \wX'$ are the natural open immersions. Now, we set
$$
W\ :=\ \overline{\Psi(U)}\ \subset\ V_{\wL'}\times \PP^2 .
$$

We write $p_1:V_{\wL'}\times\PP^2\to V_{\wL'}$ and $p_2:V_{\wL'}\times\PP^2\to\PP^2$ for the natural projections, and we set
$$
\mathcal M \ :=\ p_1^*A_1\ \otimes\ p_2^*\OO_{\PP^2}(1).
$$
By construction, on $U$ we have
\begin{equation}\label{eq:pullbackM}
\Psi^*\mathcal M\ \cong\ L_Y|_U\ \otimes\ H_Y|_U\ \cong\ (-K_Y)|_U .
\end{equation}
Let $[W]\in \mathrm{Pic}(V_{\wL'}\times\PP^2)$ denote the class of the (Cartier) divisor $W$.

\begin{prop}\label{prop:classW}
With notation as above,
$$
[W]\ =\
\begin{cases}
\OO_{\PP^1\times\PP^1\times\PP^2}(1,1,2), & \text{if }V_{\wL'}=\PP^1\times\PP^1,\\[4pt]
\OO_{\PP(1,1,2)\times\PP^2}(2,2), & \text{if }V_{\wL'}=\PP(1,1,2).
\end{cases}
$$
In particular, $W$ is a prime hypersurface of multidegree $(1,1,2)$ or $(2,2)$.
\end{prop}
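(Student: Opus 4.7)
The plan is to determine $[W]\in\Cl(V_{\wL'}\times\PP^2)$ in two steps: first, use the projection formula for $\Psi$ to constrain $[W]$ up to $\deg(\Psi)$; second, use a family-over-$T$ argument to pin down $\deg(\Psi)=1$ in Case~2.

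For the first step, since $L:=\wL'$ and $H:=\wH_3$ are Cartier divisors on $\wcX'$ whose intersection numbers are constant in the flat family $\wcX'\to T$, I would compute on the smooth generic fibre $\wcX_t\subset\PP^1\times\PP^1\times\PP^2$ that $L^3=H^3=0$, $L^2\cdot H=4$ and $L\cdot H^2=2$. By \eqref{eq:pullbackM}, these equal the triple intersections of $\Psi^*p_1^*A_1$ and $\Psi^*p_2^*\OO_{\PP^2}(1)$ on $U$, so the projection formula
$$(\Psi^*\gamma_1)(\Psi^*\gamma_2)(\Psi^*\gamma_3)=\deg(\Psi)\cdot\gamma_1\gamma_2\gamma_3\cdot[W]$$
for Cartier classes $\gamma_i$ on $V_{\wL'}\times\PP^2$ translates these into linear relations between the coefficients of $[W]$ and $\deg(\Psi)$. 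In Case~1, $A_1=H_1+H_2$ splits into Cartier generators, so writing $[W]=a_1H_1+a_2H_2+a_3H_3$, the triples $(H_1,H_3,H_3)$, $(H_2,H_3,H_3)$ and $(H_1,H_2,H_3)$ yield $\deg(\Psi)a_1=\deg(\Psi)a_2=1$ and $\deg(\Psi)a_3=2$, forcing $\deg(\Psi)=1$ and $[W]=\OO(1,1,2)$. In Case~2, only $A_1=2\xi$ is Cartier on $\PP(1,1,2)$; writing $[W]=a\xi+bH_3$, the triples $(A_1,H_3,H_3)$ and $(A_1,A_1,H_3)$ give $\deg(\Psi)a=\deg(\Psi)b=2$, leaving the dichotomy $(\deg(\Psi),a,b)\in\{(1,2,2),(2,1,1)\}$.

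To rule out $(2,1,1)$ in Case~2, I would globalise over $T$. Embedding $\cV_{\wL'}\hookrightarrow\PP^3_T$ as a family of quadric surfaces via the relative $|A_1|$, the line bundle $\cL:=\OO_{\PP^3_T}(1)|_{\cV_{\wL'}}$ restricts to $\OO(1,1)$ on smooth fibres and to $\OO_{\PP(1,1,2)}(2)=A_1$ on the central quadric cone. Since $h^0(\cV_{\wL',t}\times\PP^2,\cL\boxtimes\OO_{\PP^2}(2))=24$ is constant on $T$, the defining equations $f_t$ of $X_t$ for $t\in T^\circ$ assemble, after a finite base change, into a global section $\bar F$ of $\cL\boxtimes\OO_{\PP^2}(2)$, whose vanishing $V(\bar F)$ is a Cartier divisor coinciding with the scheme-theoretic image of $\Psi_T$ on $T^\circ$, and hence on all of $T$ by $T$-flatness. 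Restriction to $t=0$ identifies $[W]$ with the Cartier class $\cL|_0\boxtimes\OO_{\PP^2}(2)=\OO_{\PP(1,1,2)}(2)\boxtimes\OO_{\PP^2}(2)=(2,2)$, excluding the Weil-only class $(1,1)$ (which is not Cartier on $\PP(1,1,2)\times\PP^2$) and yielding $[W]=(2,2)$ with $\deg(\Psi)=1$.

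Primality of $W$ is immediate since $W=\overline{\Psi_0(U)}$ is the closure of the image of the integral variety $U$. The main obstacle I expect is verifying that in Case~2 the central fibre $V(\bar F_0)$ is irreducible — so that it equals $W$ as a scheme — which is controlled by the constancy of the generic degree of $\Psi_T$ in the flat family $\wcX'\to T$, forcing $\Psi_0$ to be birational onto its image and hence $V(\bar F_0)$ to be prime.
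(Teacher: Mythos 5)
Your route is genuinely different from the paper's: the paper derives $K_W+\mathcal M|_W\sim 0$ on $W_{\mathrm{reg}}$ from $\Psi^*\mathcal M\cong(-K_Y)|_U$ together with the birationality of $\Psi$ onto $W$, and then reads off $[W]$ by adjunction on the ambient product, whereas you use triple intersection numbers and the projection formula. In Case 1 your argument is correct and in fact sharper than the paper's: since the two ruling classes are separately Cartier, the relation $\deg(\Psi)\,a_1=1$ forces $\deg(\Psi)=1$, so birationality of $\Psi$ comes out of the computation rather than being assumed. (The one point to make precise is that the triple products of the $\Psi^*\gamma_i$ must be computed on a proper model on which $\Psi$ extends; this is harmless because intersection numbers of strict transforms of Cartier divisors are invariant under the small modifications relating $Y$, $\wX$ and $\wX'$.)

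In Case 2, however, there is a genuine gap. Your first step correctly reduces to the dichotomy $(\deg\Psi,a,b)\in\{(1,2,2),(2,1,1)\}$, but the tie-breaking argument does not close it. The flat limit $V(\bar F_0)$ of the divisors $X_t\subset (\cV_{\wL'})_t\times\PP^2$ is indeed a Cartier divisor of class $(2,2)$; the problem is that if $\deg\Psi_0=2$ and $[W]=\xi+H_3$, then $V(\bar F_0)=2W$ as a cycle, which has Cartier class $(2,2)$ and is consistent with everything you have established --- only the limit, not its reduction $W$, is forced to be Cartier, so the non-Cartierness of $\xi+H_3$ excludes nothing. Your proposed fix, ``constancy of the generic degree of $\Psi_T$ in the flat family,'' is not a valid principle: for a family of generically finite maps the degree onto the image can jump on special fibres, and it does so precisely when the image degenerates to a variety of half the degree, which is exactly the scenario $(2,1,1)$ you must exclude; invoking it here begs the question. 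Some genuine geometric input is needed --- for instance, that a general fibre of the conic bundle $(\phi_3)_0$ maps isomorphically onto a plane conic in $V_{\wL'}\subset\PP^3$ rather than $2{:}1$ onto a ruling of the cone, which one can extract from the fact that $\phi_{\wcL'}$ is given by the complete $4$-dimensional linear system $|\wL'|$ onto a quadric surface. This is the same birationality statement the paper invokes (``$\Psi:U\to W$ is birational'') before running adjunction; without it your Case 2 is incomplete.
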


\begin{proof}
On $W_{\mathrm{reg}}$ we have $K_W+\mathcal M|_W\sim 0$.
Indeed, on the common big open $U\subset Y$ where all SQMs are isomorphisms, we have 
$\Psi^*\mathcal M\cong(-K_Y)|_U$ by \eqref{eq:pullbackM}, and $\Psi:U\to W$ is birational.
Hence the pullback of $K_W+\mathcal M|_W$ to $U$ is linearly trivial, which gives 
$K_W+\mathcal M|_W\sim 0$ on $W_{\mathrm{reg}}$.

We compute by adjunction on the product $B:=V_{\wL'}\times\PP^2$. By adjunction for the Cartier hypersurface $W\subset B$, on $W_{\mathrm{reg}}$ we have
$K_W\sim (K_B+[W])|_W$. Combining with the above yields
$$(K_B+[W])|_W \ \sim\ -\,\mathcal M|_W.$$
Let $H_1,H_2$ denote the two rulings on $\PP^1\times\PP^1$ (when applicable) and let $H_3$ denote the hyperplane class on $\PP^2$; on $\PP(1,1,2)$ let $H_1$ denote the generator with $\OO(1)$ $\Q$-Cartier and $\OO(2)$ Cartier.

\smallskip
\underline{\emph{Case ${V_{\wL'}=\PP^1\times\PP^1}$.}}
Then $K_{B}=-2H_1-2H_2-3H_3$. Write $[W]=aH_1+bH_2+cH_3$ with $a,b,c\in\mathbb{Z}$, so
$$
K_W \ =\ (K_B+[W])|_W \ =\ ((a{-}2)H_1+(b{-}2)H_2+(c{-}3)H_3)|_W .
$$
On the other hand, \eqref{eq:pullbackM} and the definition of $\mathcal M$ give
$$
-K_W\ =\ \mathcal M|_W\ =\ (H_1+H_2+H_3)|_W .
$$
Comparing coefficients yields $2-a=1$, $2-b=1$, $3-c=1$, i.e.\ $(a,b,c)=(1,1,2)$.

\smallskip

\underline{\emph{Case ${V_{\wL'}=\PP(1,1,2)}$.}}
Here $K_{V_{\wL'}}\sim \OO(-4)$, hence $K_B\sim -4H_1-3H_3$. Writing $[W]=d\,H_1+c\,H_3$ with $d,c\in\mathbb{Z}$, we get
$$
K_W \ =\ ((d{-}4)H_1+(c{-}3)H_3)|_W .
$$
Again $-K_W=\mathcal M|_W=(2H_1+H_3)|_W$ (since $A_1=\OO(2)$ and $p_2^*\OO(1)=H_3$), hence $4-d=2$ and $3-c=1$, i.e.\ $(d,c)=(2,2)$.
\end{proof}

The following are auxiliary Lemmas well known to experts.
\begin{lemma}\label{lem:sections}
Let $D$ be a Cartier divisor on $Y$. For any SQM $Y\dashrightarrow Y_1$ with strict transform $D_1$, the restriction induces isomorphisms
$$
H^0(Y,mD)\ \xrightarrow{\ \cong\ }\ H^0(Y_1,mD_1)\qquad(\forall\,m\ge0).
$$
\end{lemma}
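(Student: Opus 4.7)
The plan is to exploit that an SQM is, by definition, an isomorphism in codimension one, together with the Hartogs-type extension for line bundles on normal varieties. Let $\phi:Y\dashrightarrow Y_1$ denote the SQM, and choose open subsets $U\subset Y$ and $U_1\subset Y_1$ whose complements have codimension at least two, such that $\phi$ restricts to an isomorphism $\psi:U\xrightarrow{\sim}U_1$. Such $U,U_1$ exist precisely because $\phi$ is small and both $Y,Y_1$ are normal (indeed $\Q$-factorial) in our setting.

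Next, because $D_1$ is by construction the strict transform of $D$ and $\psi$ is an isomorphism of the open loci, we have $\psi_*(D|_U)=D_1|_{U_1}$ as Cartier divisors, and hence a canonical isomorphism
$$\psi^*\,\OO_{Y_1}(mD_1)\big|_{U_1}\ \cong\ \OO_Y(mD)\big|_U$$
for every $m\ge 0$. Since $\OO_Y(mD)$ and $\OO_{Y_1}(mD_1)$ are line bundles, they are in particular reflexive $\OO$-modules. On a normal variety, sections of a reflexive sheaf extend uniquely across any closed subset of codimension at least two; applying this to the inclusions $U\hookrightarrow Y$ and $U_1\hookrightarrow Y_1$ gives
$$H^0(Y,\OO_Y(mD))\ \cong\ H^0(U,\OO_Y(mD)|_U),\qquad H^0(Y_1,\OO_{Y_1}(mD_1))\ \cong\ H^0(U_1,\OO_{Y_1}(mD_1)|_{U_1}).$$

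Combining these two identifications with the isomorphism $\psi^*$ on the common open locus yields the claimed isomorphism $H^0(Y_1,mD_1)\xrightarrow{\cong}H^0(Y,mD)$, and unraveling the constructions shows that this isomorphism is induced by restriction to $U$ (resp.\ $U_1$) followed by extension, which is precisely the restriction map along the birational identification. There is no serious obstacle: the only point requiring care is to notice that, although $\phi$ itself is not a morphism of schemes, the comparison of global sections is governed entirely by the behaviour on the big open locus $U\cong U_1$, and normality plus reflexivity take care of the rest.
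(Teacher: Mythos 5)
Your proposal is correct and follows essentially the same route as the paper's proof: identify the two varieties over a common big open set (complement of codimension $\ge 2$), note that $\OO_Y(mD)$ and $\OO_{Y_1}(mD_1)$ are reflexive and agree there, and conclude by the fact that sections of reflexive sheaves on normal varieties extend uniquely across codimension $\ge 2$ closed subsets. No further comment is needed.
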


\begin{proof}
A small modification is an isomorphism in codimension $\ge2$. Since $Y,Y_1$ are normal, $\OO_Y(mD)$ and $\OO_{Y_1}(mD_1)$ are reflexive and coincide over the common big open. Global sections of reflexive sheaves are determined on codimension $\ge2$ opens; hence the claim follows.
\end{proof}

\begin{lemma}\label{lem:factor}
Let $V:=\mathrm{Proj}\,\bigoplus_{k\ge0}H^0(Y,k(-K_Y))$ be the anticanonical model of $Y$. Then the anticanonical map $\phi_{|-K_Y|}:Y\to V$ factors as
$$
Y\ \dashrightarrow\ W\ \xrightarrow{\ |\mathcal M|_W|\ }\ V,
$$
and the induced map on graded rings
$$
\bigoplus_{k\ge0} H^0 \big(W,\ \mathcal M^{\otimes k}|_W\big)\ \longrightarrow\ \bigoplus_{k\ge0} H^0(Y,k(-K_Y))
$$
is an isomorphism in all sufficiently large degrees.
\end{lemma}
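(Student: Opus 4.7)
The plan is to construct a birational morphism $\nu\colon W \to V$ realising the claimed factorization and then to upgrade $\nu$ to an isomorphism, from which the graded ring statement follows immediately. The strategy proceeds in three steps: (a) resolve the indeterminacy of $\Psi$ via its graph and apply the rigidity lemma; (b) use ampleness of $\mathcal{M}|_W$ to rule out contracted curves; (c) apply Zariski's main theorem.

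First, I would resolve the indeterminacy of $\Psi\colon Y \dashrightarrow W$ by passing to the graph closure $\Gamma \subset Y \times W$ (normalizing if necessary), with birational projections $q_Y\colon \Gamma \to Y$ and $q_W\colon \Gamma \to W$. By~\eqref{eq:pullbackM}, the line bundles $q_Y^*(-K_Y)$ and $q_W^*(\mathcal{M}|_W)$ agree over $U$, and they agree globally on $\Gamma$ by normality and the codimension-two complement of $U$ in $Y$. Now $-K_Y = \theta^*(-K_X)$ is semiample (as $-K_X$ is ample on $X = V$) and defines the anticanonical morphism $\theta\colon Y \to V$. Any curve $C \subset \Gamma$ contracted by $q_W$ satisfies $q_Y^*(-K_Y)\cdot C = q_W^*(\mathcal{M}|_W)\cdot C = 0$, so $(q_Y)_*C$ is contracted by $\theta$. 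The rigidity lemma (after Stein-factorising $q_W$ if required) then produces a morphism $\nu\colon W\to V$ with $\theta\circ q_Y = \nu\circ q_W$ and $\nu^*(-K_V)\cong \mathcal{M}|_W$, yielding the desired factorization $\phi_{|-K_Y|}\colon Y \dashrightarrow W \xrightarrow{\nu} V$.

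Next, I would show that $\nu$ must in fact be an isomorphism. Since $\mathcal{M}$ is ample on $B$ in both cases of Proposition~\ref{prop:classW} (being a tensor product of ample pullbacks from each factor), its restriction $\mathcal{M}|_W$ is ample on $W$. A curve $C \subset W$ contracted by $\nu$ would force $\mathcal{M}|_W \cdot C = \nu^*(-K_V)\cdot C = 0$, contradicting ampleness. Hence $\nu$ has finite fibres and, as a proper birational morphism to the normal variety $V = X$, is finite birational; Zariski's main theorem then yields that $\nu$ is an isomorphism. Consequently $\nu$ coincides with the morphism defined by the complete linear system $|k\mathcal{M}|_W|$ for $k \gg 0$, and pullback gives an isomorphism of graded rings $\nu^*\colon R(V,-K_V) \xrightarrow{\sim} R(W,\mathcal{M}|_W)$ in every degree. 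Combined with $R(V,-K_V) = R(Y,-K_Y)$, which follows from the smallness of $\theta$ (Lemma~\ref{lem:sections}), this provides the required isomorphism in all sufficiently large degrees.

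The main obstacle lies in the first step: extending the identification of pulled-back line bundles from the common open $U$ to all of the graph $\Gamma$, and subsequently making the rigidity lemma applicable. This hinges on the normality of $Y$, the codimension-two complement $Y \setminus U$, and on the semiampleness of $-K_Y$ supplying the contraction $\theta$ needed for rigidity; some care is also needed to handle the possibly non-normal nature of $W$, which is why one works with the normalization of $\Gamma$ and Stein-factorises $q_W$. Once $\nu$ is constructed, the strict positivity of the ample divisor $\mathcal{M}|_W$ makes the remaining steps essentially formal.
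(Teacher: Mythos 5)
Your argument takes a genuinely different route from the paper's: the paper works entirely at the level of sections (pulling back sections of $\mathcal M^{\otimes k}|_W$ along $\Psi$ to $U$, extending over $Y$ by normality since $Y\setminus U$ has codimension $\ge 2$, descending in the other direction, and invoking vanishing on the ambient product to get equality of graded pieces for $k\gg0$), whereas you construct a morphism $\nu\colon W\to V$ via the graph closure and the rigidity lemma and in effect prove the stronger statement $W\cong V$, which is the content of the subsequent Proposition \ref{prop:V=W}. That would be a legitimate strengthening, but there is a genuine gap at precisely the step you flag as the main obstacle. The assertion that $q_Y^*(-K_Y)$ and $q_W^*(\mathcal M|_W)$ ``agree globally on $\Gamma$ by normality and the codimension-two complement of $U$ in $Y$'' does not follow: $Y\setminus U$ has codimension $\ge 2$ in $Y$, but its preimage $q_Y^{-1}(Y\setminus U)$ in the graph $\Gamma$ is in general a \emph{divisor} (resolving the indeterminacy of $Y\dashrightarrow\wX$ and $Y\dashrightarrow\wX'$ blows up the flipped curves), so the two line bundles agree only off a divisorial locus of $\Gamma$, and normality gives nothing there. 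Indeed one expects them to differ: $q_Y^*(-K_Y)$ is trivial on $q_Y$-fibres while $q_W^*(\mathcal M|_W)$ need not be. This unproved identification is used twice: for the rigidity step (to get $q_Y^*(-K_Y)\cdot C=0$ for curves $C$ contracted by $q_W$) and for the identification $\nu^*(-K_V)\cong\mathcal M|_W$ on which your graded-ring conclusion rests.

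The rigidity step can be repaired without the global identification: a curve $C\subset\Gamma$ contracted by $q_W$ either lies in $q_Y^{-1}(U)\cong U$, where \eqref{eq:pullbackM} does apply and gives $q_Y^*(-K_Y)\cdot C=q_W^*(\mathcal M|_W)\cdot C=0$, or it lies over $Y\setminus U$. In the latter case $q_Y(C)$ is a point or a curve contained in the exceptional locus of the small morphism $Y\to X$; since $-K_Y$ is the pullback of $-K_X$ under that small morphism, it is numerically trivial on every such curve, so again $q_Y^*(-K_Y)\cdot C=0$. With this substitute the factorisation and the finiteness/Zariski's-Main-Theorem argument go through (modulo the minor point that rigidity naturally produces the morphism on the Stein factor, i.e.\ the normalization of $W$, and one must still descend to $W$ itself). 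However, the final claim $\nu^*(-K_V)\cong\mathcal M|_W$ as line bundles on $W$ — needed to identify the graded rings — still requires an argument, since the two bundles are only identified a priori away from $q_W\bigl(q_Y^{-1}(Y\setminus U)\bigr)$, which can be a divisor in $W$. The paper's section-by-section comparison avoids all of these issues, which is why it is the route taken there.
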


\begin{proof}
By construction, on $U\subset Y$ we have $\Psi^*\mathcal M\cong (-K_Y)|_U$ (eq.\ \eqref{eq:pullbackM}). Thus every section of $\mathcal M^{\otimes k}|_W$ pulls back to a section of $k(-K_Y)$ on $U$, hence on $Y$ since $Y$ is normal. Conversely, a section of $k(-K_Y)$ on $Y$ restricts on $U$ to a section of $((\phi_{\wcL'})_0^*A_1\otimes (\phi_{3})_0^*\OO(1))^{\otimes k}$, hence descends from a section of $\mathcal M^{\otimes k}|_W$. Kawamata--Viehweg vanishing on the product $V_{\wL'}\times\PP^2$ then shows that the restriction maps from the ambient to $W$ are surjective in all $k\gg0$, giving equality of graded pieces for $k\gg0$. Taking $\mathrm{Proj}$ yields the factorisation.
\end{proof}

\begin{prop}\label{prop:V=W}
The anticanonical model $V$ identifies with $W$; in particular, $W$ is normal.
\end{prop}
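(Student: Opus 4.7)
The plan is to use ampleness of $\mathcal{M}|_W$ on $W$ to reconstruct $W$ as the $\Proj$ of its section ring, and then invoke the graded-ring comparison of Lemma \ref{lem:factor} to identify this $\Proj$ with $V$.

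First I would verify that $\mathcal{M} = p_1^*A_1 \otimes p_2^*\OO_{\PP^2}(1)$ is ample on $V_{\wL'}\times \PP^2$: when $V_{\wL'}=\PP^1\times\PP^1$ we have $A_1 = \OO(1,1)$, while when $V_{\wL'}=\PP(1,1,2)$ the class $A_1=\OO(2)$ is the ample Cartier generator. In either case the exterior tensor product of ample line bundles on the two factors is ample on the product, and ampleness is preserved by restriction to the closed subvariety $W$, so $\mathcal{M}|_W$ is ample.

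Since $\mathcal{M}|_W$ is ample on the projective variety $W$, the standard $\Proj$-construction (applied to a high Veronese to obtain a closed embedding, together with the fact that $\Proj$ is invariant under passage to Veronese subrings) provides a canonical isomorphism
$$W \;\cong\; \Proj\bigoplus_{k\ge 0} H^0\bigl(W,\mathcal{M}^{\otimes k}|_W\bigr).$$
By Lemma \ref{lem:factor}, the right-hand graded ring agrees in all sufficiently large degrees with the anticanonical ring $\bigoplus_{k\ge 0} H^0(Y,k(-K_Y))$, whose $\Proj$ is $V$ by definition. Since $\Proj$ depends only on a truncation of a graded algebra, this yields $W\cong V$, and the identification is the canonical one induced by $|\mathcal{M}|_W|$ via the factorisation $Y\dashrightarrow W\to V$ of Lemma \ref{lem:factor}.

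For normality I would argue as follows: the small birational morphism $g:Y\to X$ is crepant (a small birational morphism to a klt variety is crepant), so $-K_Y=g^*(-K_X)$ and therefore $H^0(Y,k(-K_Y))=H^0(X,k(-K_X))$ for every $k\ge 0$; as $-K_X$ is ample, $V\cong X$, which is normal (in fact klt), and hence so is $W$.

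The main obstacle will be ensuring that the abstract graded-ring isomorphism in large degrees from Lemma \ref{lem:factor} produces the \emph{canonical} scheme isomorphism $W\to V$ induced by $|\mathcal{M}|_W|$, so that the identification is geometric and compatible with the ambient diagram rather than just an abstract equivalence. This is essentially formal once ampleness of $\mathcal{M}|_W$ is in hand, but it is the crucial bridge between the two constructions.
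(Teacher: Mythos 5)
Your proof is correct, and it reaches the conclusion by a slightly different final step than the paper. The paper also starts from the ampleness (indeed very ampleness) of $\mathcal M$ on $V_{\wL'}\times\PP^2$, but it uses Serre vanishing to get surjectivity of the restriction maps $H^0(B,\mathcal M^{\otimes k})\twoheadrightarrow H^0(W,\mathcal M^{\otimes k}|_W)$ for $k\gg0$, concludes that $|\mathcal M^{\otimes k}|_W|$ defines a finite morphism, observes via Lemma \ref{lem:factor} that the induced map $W\to V$ is finite and birational, and finishes with Zariski's Main Theorem. You instead invoke the reconstruction theorem $W\cong \Proj\bigoplus_k H^0(W,\mathcal M^{\otimes k}|_W)$ for an ample line bundle on a proper scheme and then match graded rings in large degrees using Lemma \ref{lem:factor}; since $\Proj$ only depends on a truncation, this gives $W\cong V$ directly. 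Both arguments hinge on the same two inputs (ampleness of $\mathcal M|_W$ and Lemma \ref{lem:factor}), so the difference is one of packaging rather than substance. One genuine merit of your write-up: the ZMT step in the paper tacitly requires the target $V$ to be normal (a finite birational morphism onto a non-normal variety need not be an isomorphism), and this is only justified later via $V\cong X$ in Proposition \ref{prop:X=W}; you make this normality explicit at the point where it is used, which is cleaner logically. Your parenthetical justification of the reconstruction $W\cong\Proj$ of the section ring via a high Veronese is a little terse, but the fact is standard for proper schemes with an ample invertible sheaf and does not require $W$ to be normal, so there is no gap.
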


\begin{proof}
On $B=V_{\wL'}\times\PP^2$ the line bundle $\mathcal M:=p_1^*A_1\otimes p_2^*\OO_{\PP^2}(1)$ is very ample.
For $k\gg 0$, Serre vanishing on $B$ implies the restriction maps
$$H^0 \big(B,\mathcal M^{\otimes k}\big)\ \twoheadrightarrow\ H^0 \big(W,\mathcal M^{\otimes k}|_W\big)$$
are surjective. Hence $|\mathcal M^{\otimes k}|_W|$ is basepoint-free and defines a \emph{finite} morphism
$W\to \PP\big(H^0(W,\mathcal M^{\otimes k}|_W)^\vee\big)$.

By Lemma \ref{lem:factor}, the anticanonical map $\phi_{|-K_Y|}:Y\to V$ factors through this morphism
and induces a birational map $W\dashrightarrow V$. Therefore $W\to V$ is finite and birational; by
Zariski’s Main Theorem it is an isomorphism. In particular $W$ is normal.
\end{proof}

\begin{prop}\label{prop:X=W}
We have isomorphisms of anticanonical rings
$$
\bigoplus_{k\ge0} H^0(X,k(-K_X))\ \cong\ \bigoplus_{k\ge0} H^0(Y,k(-K_Y)),
$$
hence $\mathrm{Proj}\,\bigoplus_{k\ge0} H^0(X,k(-K_X)) \ \simeq\ V \ \simeq\ W$.
In particular, since $-K_X$ is ample, $X\simeq W$.
\end{prop}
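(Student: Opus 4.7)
The plan is to reduce the statement to a comparison of anticanonical rings, and then invoke Proposition \ref{prop:V=W}. Concretely, since $X$ is a $\mathbb{Q}$-Fano variety, $-K_X$ is ample, so $X$ is canonically identified with $\mathrm{Proj}\bigoplus_{k\ge 0} H^0(X,-kK_X)$. It therefore suffices to exhibit an isomorphism of graded rings $R(X,-K_X)\cong R(Y,-K_Y)$, because then $\mathrm{Proj}\,R(X,-K_X)\simeq\mathrm{Proj}\,R(Y,-K_Y)=V$, and $V\simeq W$ by Proposition~\ref{prop:V=W}.

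The ring isomorphism would follow from the structure of the small $\mathbb{Q}$-factorialisation $\theta\colon\mathcal{Y}\to\mathcal{X}$ restricted to the central fibre. First, I would argue that the induced morphism $f\colon Y\to X$ is a proper birational morphism between normal projective varieties which is an isomorphism in codimension one (since $\theta$ is small and the central fibre of a smoothing specialises to a divisor only in a controlled way), and that $K_Y = f^*K_X$ (this is inherited from the crepancy $K_{\mathcal{Y}}=\theta^*K_{\mathcal{X}}$). Next, since the hypothesis $V\geq 18$ and Theorem~\ref{gor canonical K-ss limits}(1) guarantee that $X$ is Gorenstein canonical, the divisor $-kK_X$ is Cartier for every $k\ge 0$. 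Then the projection formula together with $f_*\mathcal{O}_Y=\mathcal{O}_X$ gives
\[
f_*\mathcal{O}_Y(-kK_Y) \;=\; f_*f^*\mathcal{O}_X(-kK_X) \;=\; \mathcal{O}_X(-kK_X)\otimes f_*\mathcal{O}_Y \;=\; \mathcal{O}_X(-kK_X),
\]
hence $H^0(Y,-kK_Y)\cong H^0(X,-kK_X)$ compatibly with multiplication, so the anticanonical rings coincide.

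Combining this with Proposition~\ref{prop:V=W}, I obtain
\[
X \;\simeq\; \mathrm{Proj}\,R(X,-K_X) \;\simeq\; \mathrm{Proj}\,R(Y,-K_Y) \;=\; V \;\simeq\; W,
\]
which is the desired conclusion. The main obstacle I anticipate is the first sentence of the second paragraph: verifying rigorously that the restriction $f=\theta|_Y$ is well-defined, small, and crepant on the central fibre. If one is worried that $\theta$ could fail to be small on fibres, an alternative is to bypass $Y$ altogether and work directly with a common log resolution dominating both $X$ and $Y$; then the projection formula and crepancy arguments can be applied upstairs and pushed down to both $X$ and $Y$ separately, yielding the same identification of anticanonical rings with $R(W,\mathcal{M}|_W)$ via Lemma~\ref{lem:factor}. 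Either route culminates in $X\simeq W$ because $-K_X$ is ample.
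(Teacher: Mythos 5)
Your proposal is correct and follows essentially the same route as the paper: identify the anticanonical rings of $X$ and $Y$ via the small crepant morphism $\theta|_Y$, apply Proposition~\ref{prop:V=W}, and conclude from ampleness of $-K_X$ that $X\simeq\mathrm{Proj}\,R(X,-K_X)\simeq W$. The only cosmetic difference is that you justify $H^0(X,-kK_X)\cong H^0(Y,-kK_Y)$ by the projection formula with $f_*\mathcal O_Y=\mathcal O_X$ (using that $-K_X$ is Cartier by Theorem~\ref{gor canonical K-ss limits}), whereas the paper invokes Lemma~\ref{lem:sections} (reflexive sheaves agree on big opens); both are standard and equivalent here.
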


\begin{proof}
Because $\theta:Y\to X$ is small and crepant, we have $\theta^*(-K_X)=-K_Y$ and the sections agree. By Lemma \ref{lem:sections} $H^0(X,k(-K_X))\cong H^0(Y,k(-K_Y))$ for all $k\ge0$. Thus, the rings coincide, and so do their $\mathrm{Proj}$'s. Finally, $-K_X$ is ample on the normal projective variety $X$, and therefore $X$ is isomorphic to the image of $\phi_{|-K_X|}$ and therefore to $\mathrm{Proj}\,\bigoplus_k H^0(X,k(-K_X))$.
\end{proof}

The following is then immediate.

\begin{cor}\label{cor:main}
$X$ is either embedded in $\PP^1\times\PP^1\times\PP^2$ as a $(1,1,2)$ divisor, or it is embedded in $\PP(1,1,2)\times\PP^2$ as a $(2,2)$ divisor.
\end{cor}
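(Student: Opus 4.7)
The plan is to simply assemble the three preceding propositions; all the substantive work has already been done, and the corollary is essentially a bookkeeping statement. First, I would invoke Proposition \ref{prop:X=W} to identify $X$ with the closure $W \subset V_{\wL'}\times \PP^2$ constructed after Proposition \ref{studying H1H2}. The isomorphism $X\simeq W$ reduces the statement to identifying the embedding of $W$ in a concrete product.

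Next, I would split into two cases according to Proposition \ref{studying H1H2}, which establishes that the ample model $V_{\wL'}$ is either $\PP^1\times\PP^1$ or $\PP(1,1,2)$. In the first case, $V_{\wL'}\times \PP^2 = \PP^1\times \PP^1\times \PP^2$, and Proposition \ref{prop:classW} computes $[W]=\OO(1,1,2)$, so $X\simeq W$ is a $(1,1,2)$-divisor. In the second case, $V_{\wL'}\times \PP^2 = \PP(1,1,2)\times \PP^2$, and Proposition \ref{prop:classW} gives $[W]=\OO(2,2)$, so $X\simeq W$ is a $(2,2)$-divisor.

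The only remaining issue to address is that in the second case the divisor class $\OO(2,2)$ is genuinely Cartier on $\PP(1,1,2)\times\PP^2$, so that the adjunction argument of Proposition \ref{prop:classW} (which relies on $W$ being a Cartier hypersurface) is valid; this is precisely the reason the choice $A_1=\OO_{\PP(1,1,2)}(2)$ was forced upon us earlier. There is no real obstacle here: once one observes that $2H_1$ is Cartier on $\PP(1,1,2)$, the two cases are entirely symmetric. I would then conclude in a single sentence that combining Propositions \ref{studying H1H2}, \ref{prop:classW} and \ref{prop:X=W} yields the desired dichotomy.
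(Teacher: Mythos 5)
Your proposal is correct and follows exactly the route the paper intends: the corollary is stated as an immediate consequence of Propositions \ref{studying H1H2}, \ref{prop:classW} and \ref{prop:X=W}, which is precisely the assembly you describe, including the case split on $V_{\wL'}$ and the observation that $[W]$ is the stated Cartier class in each case. Your remark on why $A_1=\OO_{\PP(1,1,2)}(2)$ is chosen Cartier is consistent with how Proposition \ref{prop:classW} is set up, so nothing further is needed.
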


We will now briefly describe the full birational picture of $X$ depending on whether $V_{\wL'}$ is smooth or singular. We first study the separate case where $V_{\wL'} = \PP^1\times \PP^1$. Notice that for $\phi_{\wcL'}:\wcX\rightarrow \cV_{\wL'}$, there exists a line bundle $A$ on $\cV_{\wL'}$ whose restriction $A_t$ to each smooth fibre $(\cV_{\wL'})_t$ is one ruling on the quadric surface (so $\OO_{(\cV_{\wL'})_t}(1)\cong A_t\otimes B_t$ with $A_t,B_t$ the rulings). 

\begin{lemma}\label{lem:factor-semiample}
In the setting discussed above, the composition
$$
\Phi:=r\circ\phi_{\wcL'}: \wcX\longrightarrow\PP^1_T,\qquad r:=\Phi_{|A|}:\cV_{\wL'}\to\PP^1_T
$$
is a morphism over $T$ and
$$
\phi_{\wcL'}^*A=\Phi^*\OO_{\PP^1}(1)
$$
is $ \wpi$–semiample (hence $ \wpi$–nef).
In particular, if both ruling classes extend on $\cV_{\wL'}$, i.e. when $V_{\wL'} = \PP^1\times \PP^1$ is a smooth quadric, then both $\wcH_1'$ and $\wcH_2'$ are $ \wpi$–semiample and $ \wpi$–nef.
\end{lemma}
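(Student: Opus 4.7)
My plan is to derive everything from relative base-point-freeness of $A$ on $\cV_{\wL'}$ over $T$, and then simply pull back via $\phi_{\wcL'}$. First I would apply cohomology and base change to the proper flat morphism $\wpi_{\cV_{\wL'}}: \cV_{\wL'} \to T$ to show that $\wpi_{\cV_{\wL'},*} A$ is a locally free $\OO_T$-module of rank $2$ whose formation commutes with base change, and that the tautological evaluation map $\wpi_{\cV_{\wL'}}^* \wpi_{\cV_{\wL'},*} A \twoheadrightarrow A$ is surjective. On a general fibre $A_t \cong \OO_{\PP^1 \times \PP^1}(1,0)$ is a ruling, so $h^0(A_t) = 2$, $h^1(A_t) = 0$, and $|A_t|$ is base-point-free. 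At $t=0$, Proposition~\ref{studying H1H2} gives that $V_{\wL'}$ is Gorenstein canonical of degree $8$; since $A_0$ is Cartier and its intersection numbers with $-K_{V_{\wL'}}$ are preserved by flatness, Kawamata--Viehweg vanishing on $V_{\wL'}$ gives $h^1(A_0) = 0$, and upper semicontinuity forces $h^0(A_0) = 2$ together with base-point-freeness.

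Next I would use the resulting surjection to define a $T$-morphism $r: \cV_{\wL'} \to \PP_T(\wpi_{\cV_{\wL'},*} A)$ with $r^* \OO(1) \cong A$. Since $\wpi_{\cV_{\wL'},*} A$ has rank $2$ and $T$ is a smooth pointed curve, after possibly shrinking $T$ we trivialise this bundle and obtain $r: \cV_{\wL'} \to \PP^1_T$. The composition $\Phi := r \circ \phi_{\wcL'}$ is then a morphism of $T$-schemes, and by functoriality
$$\Phi^{*} \OO_{\PP^1}(1) = \phi_{\wcL'}^{*} r^{*} \OO_{\PP^1}(1) = \phi_{\wcL'}^{*} A.$$
As the pullback of a very ample line bundle under a morphism to $\PP^1_T$, the line bundle $\phi_{\wcL'}^{*} A$ is $\wpi$-base-point-free, in particular $\wpi$-semiample and $\wpi$-nef, proving the first statement.

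For the addendum, when $V_{\wL'} = \PP^1 \times \PP^1$ is smooth, both ruling classes extend to line bundles $A^{(1)}, A^{(2)}$ on $\cV_{\wL'}$, and by the construction of $\wcX'$ as the relative ample model for $\cL$ we may identify $\wcH_i' = \phi_{\wcL'}^{*} A^{(i)}$ via the splitting $\wcL' = \phi_{\wcL'}^{*}(A^{(1)} + A^{(2)})$. Applying the main argument separately to each $A^{(i)}$ then yields the $\wpi$-semiampleness and $\wpi$-nefness of both $\wcH_1'$ and $\wcH_2'$.

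The main obstacle is the cohomology-and-base-change step at $t = 0$, especially ensuring $h^0(A_0) = 2$ and base-point-freeness of $|A_0|$ when $V_{\wL'} = \PP(1,1,2)$: there the Weil class of a ruling is not Cartier, so the argument relies crucially on the hypothesis that $A$ is an actual line bundle on $\cV_{\wL'}$ (forcing $A_0$ to be Cartier), combined with the Gorenstein canonical del Pezzo structure of $V_{\wL'}$ established in Proposition~\ref{studying H1H2} and the standard vanishing theorems it supports.
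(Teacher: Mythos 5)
Your proof is correct and follows essentially the same route as the paper: relative base-point-freeness of $A$ on $\cV_{\wL'}/T$ yields the morphism $r$ to $\PP^1_T$ with $r^*\OO(1)=A$, and pulling back along $\phi_{\wcL'}$ gives a relatively globally generated, hence $\wpi$-semiample and $\wpi$-nef, line bundle. The only difference is that you supply the cohomology-and-base-change justification for relative base-point-freeness, which the paper simply asserts; just note that in the $\PP(1,1,2)$ case no Cartier class of ruling type exists on the central fibre (every Cartier divisor there is a multiple of $\OO(2)$, which has anticanonical degree $4$, not $2$), so the hypothesis on $A$ is vacuous there rather than something your vanishing argument verifies.
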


\begin{proof}
Since $A$ is relatively basepoint free on $\cV_{\wL'}/T$, it defines $r:\cV_{\wL'}\to\PP^1_T$ with $A=r^*\OO_{\PP^1}(1)$. Then $\Phi=r\circ\phi_{\wcL'}$ is a morphism and
$\Phi^*A=\phi^*\OO_{\PP^1}(1)$, which is relatively globally generated, hence $ \wpi$–semiample and nef.
\end{proof}

Thus we conclude that when $V_{\wL'} = \PP^1\times \PP^1$ the line bundles $\wcH_1'+\wcH_3$ and $\wcH_2'+\wcH_3$ are $ \wpi$-semiample. In particular, their ample models $\cV_{1,3}$ and $\cV_{2,3}$ define morphisms $\phi_{1,3}:\wcX\rightarrow \cV_{1,3}$ and $\phi_{2,3}:\wcX\rightarrow \cV_{2,3}$ such that for all $t\in T^{\circ}$ the restriction morphisms are birational from the smooth element of threefold family \textnumero3.3 to $(\cV_{13})_t\cong (\cV_{23})_t\cong \PP^1\times \PP^2$ which the blow up of $\PP^1\times \PP^2$ along a genus $3$ curve given as the complete intersection of two $(1,2)$ surfaces in $\PP^1\times \PP^2$. Denote by $V_{13}:=(\cV_{13})_0$ and $V_{23}:=(\cV_{23})_0$. Then, we also conclude the following.

\begin{lemma}\label{lem: auxialiary}
    Assume $V_{\wL'} = \PP^1\times \PP^1$ is smooth quadric.
    The morphisms $(\phi_{1,3})_0:X\rightarrow V_{1,3}$ and $(\phi_{2,3})_0:X\rightarrow V_{2,3}$ are birational and contract the exceptional divisors $E_1$ and $E_2$, where $E_1+E_2 = 4H_3$, to curves in $V_{1,3}$ and $V_{2,3}$, respectively.
\end{lemma}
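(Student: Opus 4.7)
Under the assumption $V_{\wL'}=\PP^1\times\PP^1$, both $\phi_{1,3}$ and $\phi_{2,3}$ exist as morphisms over $T$ whose restrictions to $T^\circ$ are the classical blow-ups $\phi_1,\phi_2$ of Section \ref{sec: fano geometry}, with exceptional divisors $E_i^t$ of classes $H_1^t+2H_3^t-H_2^t$ and $H_2^t+2H_3^t-H_1^t$ respectively. My plan is to deform these fiberwise statements to the central fiber, using as the key tool Theorem \ref{gor canonical K-ss limits}(3), which ensures that the Weil divisors $\wcH_1,\wcH_2,\wcH_3$ on $\wcX$ are in fact Cartier: each restricts to a $\Q$-Cartier divisor on the $\Q$-factorial central fiber $\wX$ and to a Cartier divisor on the smooth general fibers.

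I would first establish birationality. Since $\phi_{1,3}:\wcX\to \cV_{1,3}$ is projective over $T$ with source and target flat of relative dimension $3$, and its restriction to every $t\in T^\circ$ is a birational blow-up, constancy of generic-to-generic degree along the family forces $(\phi_{1,3})_0$ to have generic degree $1$, hence be birational; the identical argument handles $(\phi_{2,3})_0$.

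Next, I construct $E_1$ and $E_2$ as flat limits of the general-fiber exceptional divisors. Let $\cE_i\subset\wcX$ be the Zariski closure of $\bigcup_{t\in T^\circ}E_i^t$: a prime Weil divisor, flat over $T^\circ$, whose central fiber $(\cE_i)_0$ is an effective divisor on $\wX$. Using Cartierness of the $\wcH_j$, the class $\cE_1-(\wcH_1+2\wcH_3-\wcH_2)$ in $\Cl(\wcX)$ vanishes over $\wcX^\circ$ by the formula on the general fiber; hence it is supported on the principal central fiber $\wX$, and therefore zero in $\Cl(\wcX)$. Restricting to $\wX$ gives $(\cE_1)_0\sim\wH_1+2\wH_3-\wH_2$, and analogously $(\cE_2)_0\sim\wH_2+2\wH_3-\wH_1$. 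Upper semicontinuity of fiber dimension for $\phi_{1,3}$ then shows that $(\cE_1)_0$ is contracted to a subscheme of $V_{1,3}$ of dimension at most one; the image cannot collapse to a point without violating the relative Hilbert polynomial of the image family, so it is a curve, and similarly for $(\cE_2)_0$.

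Pushing forward by the small contraction $g:\wX\to X$ (which contracts no divisors) transports $(\cE_i)_0$ and its linear equivalence class to a divisor $E_i$ on $X$ with class $H_1+2H_3-H_2$ (resp.\ $H_2+2H_3-H_1$); summing yields $E_1+E_2\sim 4H_3$ in $\Cl(X)$. The main obstacle I expect is controlling the specialization: ensuring that $(\cE_i)_0$ is honestly a divisor with the predicted Weil class on the (possibly singular) $\wX$. This is resolved by appealing to Theorem \ref{gor canonical K-ss limits}(3) to reduce to Cartier-divisor bookkeeping on $\wcX$, combined with the observation that any Weil divisor on $\wcX$ supported on the Cartier central fiber $\wX$ is linearly equivalent to zero.
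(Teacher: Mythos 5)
Your overall strategy coincides with the paper's: both arguments take the Zariski closure $\cE_i$ of the general-fibre exceptional divisors, observe that the image family $\cC=\phi_{i,3}(\cE_i)$ is an irreducible surface dominating $T$ whose general fibre is a curve, and conclude that the central fibre $\cC_0$ is a curve; your extra bookkeeping with Theorem \ref{gor canonical K-ss limits}(3) to pin down the Weil class of $(\cE_i)_0$ (and hence $E_1+E_2\sim 4H_3$) is a legitimate and slightly more explicit version of what the paper leaves implicit.

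There are, however, two points where your write-up falls short of the paper's proof. First, and most substantively, you never show that $(\cE_1)_0$ \emph{is} the exceptional divisor of $(\phi_{1,3})_0$: your argument shows that this particular divisor is contracted to a curve, but does not exclude that $(\phi_{1,3})_0$ contracts additional curves or divisors lying outside $(\cE_1)_0$, which is needed to speak of ``the exceptional divisor $E_1$'' and to make the identity $E_1+E_2=4H_3$ meaningful as stated. The paper closes this by noting that $2(H_1+H_3)-(1-\epsilon)(E_1+E_2)$ is ample for $0<\epsilon\ll1$, so any curve $C'$ with $(H_1+H_3)\cdot C'=0$ must satisfy $(E_1+E_2)\cdot C'<0$ and hence lie in $E_1$ (resp.\ $E_2$); you should add this or an equivalent positivity argument. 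Second, your birationality step rests on ``constancy of generic-to-generic degree along the family,'' which is not a general fact: the degree of a generically finite morphism can jump in the central fibre if the image degenerates (e.g.\ becomes non-reduced or drops in degree). The correct argument — the one the paper delegates to \cite[Lemma 4.11]{LZ24} — compares $\deg((\phi_{1,3})_0)\cdot\deg(V_{1,3})$ with the top self-intersection $(\wH_1'+\wH_3)^3$, which is constant by flatness, together with control on $\deg(V_{1,3})$ and normality of the central fibre of the ample model. As written, that step is an assertion rather than a proof.
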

\begin{proof}
    The morphisms are birational by a similar argument as in the proof of \cite[Lemma 4.11]{LZ24}. Consider the restriction map $\phi_{1,3}|_{\cE_1}:\cE_1\rightarrow \cC$. As $\cC$ is an irreducible scheme over $T$, whose general fibre is a curve, then $\cC$ is a surface, and $\cC_0$ is a curve. On the other hand, if $C'\subseteq X$ is a curve such that $C'$ is contracted by $(\phi_{1,3})_0$, then $C'\subseteq E_1$ because $2( H_1+ H_3)-(1-\epsilon)(  E_1+  E_2)$ is ample for $0<\epsilon \ll 1$. Hence, $ X$ is the blow up of a curve in $V_{1,3}$. The argument for $V_{2,3}$ is identical.
\end{proof}

To recap the above, $\cL:=\cH_1+\cH_2$ and $\cH_3$ are $ \pi$–semiample, with associated morphisms
$$
\phi_{\cL}:\cX\to \cV_{\cL},\qquad
\phi_{3}:\cX\to \PP^2_T,
$$
where $\cV_{\cL}:=\Proj_T\bigoplus_{m\ge0} \pi_*\OO_{\cX}(m\cL)$ is either a smooth quadric or a cone in $\PP^3$ and in particular $\cL=\phi_{\cL}^*\OO_{\mathcal S}(1)$, $\cH_3=\phi_{3}^*\OO_{\PP^2}(1)$. In particular, up to the possibility of $\cV_{\cL}$ we have the following two commutative diagrams on the central fibre $X = \cX_0$ 
\[
\begin{tikzcd}[column sep=5em, row sep=3em]
& \PP^1\times \PP^1 \arrow[dr] \arrow[dl] & \\
\PP^1 & & \PP^1 \\
& X 
  \arrow[uu, "(\phi_{\cL})_0" near start]
  \arrow[ul, "(\phi_{1})_0"', sloped, pos=0.6] 
  \arrow[ur, "(\phi_{2})_0", sloped, pos=0.6]
  \arrow[dl, "(\phi_{1,3})_0"', sloped, pos=0.4] 
  \arrow[dr, "(\phi_{2,3})_0", sloped, pos=0.4] 
  \arrow[dd, "(\phi_{3})_0" near start] & \\
\PP^1\times \PP^2 \arrow[uu] \arrow[dr] & & \PP^1\times \PP^2 \arrow[uu] \arrow[dl] \\
& \PP^2 &
\end{tikzcd}
\]
(when $V_{\cL} = \PP^1\times \PP^1$) and
\begin{center}
\begin{tikzcd}[column sep=huge, row sep=huge]
  & {X_0} \arrow[dl, "{(\phi_{\cL})_0}"'] \arrow[d, "{((\phi_{\cL})_0,(\phi_{3})_0)}"] \arrow[dr, "{(\phi_{3})_0}"] & \\
  {\PP(1,1,2)} & {\PP(1,1,2) \times \mathbb{P}^2} \arrow[l, "{pr_1}"'] \arrow[r, "{pr_2}"] & {\mathbb{P}^2}
\end{tikzcd}
\end{center}
(when $V_{\cL} = \PP(1,1,2)$), as the only possibilities. 

\section{K-moduli of family 3.3 and GIT}\label{sec:k-moduli}
In this section we will describe the K-moduli space of family \textnumero3.3 using the GIT of $(1,1,2)$ divisors in $\PP^1\times \PP^1\times \PP^2$.

\subsection{Deformation theory of K-semistable limits}\label{sec:def_theo}

\begin{prop}\label{deformation}
    Let $X$ be either a $(1,1,2)$ divisor in $\PP^1\times \PP^1\times \PP^2$ or a $(2,2)$ divisor in $\PP(1,1,2)\times \PP^2$. Then, there are no obstructions to deformations of $X$.
\end{prop}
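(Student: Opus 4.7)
The plan is to prove unobstructedness by showing that the tangent cohomology group $T^2(X) := \operatorname{Ext}^2(\mathbb{L}_X, \OO_X)$ vanishes, since it contains the obstructions to abstract first-order deformations of $X$. In both cases $X$ will turn out to be LCI inside a smooth ambient variety, so $\mathbb{L}_{X/\text{ambient}} \simeq N_{X/\text{ambient}}^{\vee}[1]$ and the transitivity triangle for the cotangent complex becomes a usable computational device.

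For the first case, let $Y := \PP^1 \times \PP^1 \times \PP^2$ and let $X \subset Y$ be a $(1,1,2)$ Cartier divisor. Since $Y$ is smooth and $X$ Cartier, the inclusion is a regular immersion. Applying $R\Hom(-, \OO_X)$ to the transitivity triangle $\Omega_Y|_X \to \mathbb{L}_X \to \mathbb{L}_{X/Y}$ yields the exact sequence
\[
H^1(N_{X/Y}) \longrightarrow T^2(X) \longrightarrow H^2(T_Y|_X) \longrightarrow H^2(N_{X/Y}),
\]
so it suffices to show that $H^1(N_{X/Y}) = 0$ and $H^2(T_Y|_X) = 0$. The first uses the sequence $0 \to \OO_Y \to \OO_Y(1,1,2) \to \OO_X(1,1,2) \to 0$ and the vanishings $H^i(\OO_Y) = 0$, $H^i(\OO_Y(1,1,2)) = 0$ for $i > 0$, which are immediate from Künneth. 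The second uses $0 \to T_Y(-1,-1,-2) \to T_Y \to T_Y|_X \to 0$ together with the decomposition $T_Y = \bigoplus_{i=1}^{3} p_i^* T_{\PP^{n_i}}$; Künneth then reduces the question to standard line-bundle cohomology on the three factors, giving $H^2(T_Y) = 0 = H^3(T_Y(-1,-1,-2))$.

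For the second case, let $X \subset Y' := \PP(1,1,2) \times \PP^2$ be a $(2,2)$ divisor. Since $Y'$ is singular, I avoid working relative to $Y'$ and instead embed $\PP(1,1,2)$ into $\PP^3$ as the quadric cone $Q = V(xy - z^2)$ via $[u:v:s] \mapsto [u^2:uv:v^2:s]$. Under this embedding $X$ becomes the scheme-theoretic intersection, inside $Z := \PP^3 \times \PP^2$, of the $(2,0)$ divisor $Q \times \PP^2$ with a $(1,2)$ divisor lifting the defining equation of $X$. These two equations are coprime and their zero locus has the expected codimension, so they form a regular sequence; thus $X \subset Z$ is a codimension-two LCI with $N_{X/Z} \simeq \OO_X(2,0) \oplus \OO_X(1,2)$. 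The same triangle argument gives
\[
H^1(N_{X/Z}) \longrightarrow T^2(X) \longrightarrow H^2(T_Z|_X) \longrightarrow H^2(N_{X/Z}),
\]
and I show the two outer terms vanish by feeding the Koszul resolution
\[
0 \to \OO_Z(-3,-2) \to \OO_Z(-2,0) \oplus \OO_Z(-1,-2) \to \OO_Z \to \OO_X \to 0,
\]
suitably twisted by $\OO_Z(2,0)$, $\OO_Z(1,2)$, and by $T_Z = p_1^*T_{\PP^3} \oplus p_2^*T_{\PP^2}$, into Künneth on $\PP^3 \times \PP^2$ together with the standard vanishings for line bundles on projective space.

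The main technical obstacle is the bookkeeping in the second case: one must push each of the three relevant twists of the Koszul complex through the long exact sequence of cohomology and verify that every Künneth summand vanishes (which is where an off-by-one could creep in), while also confirming that regular-sequence-ness, and therefore the LCI property, persists at the points of $X$ lying over the vertex of the quadric cone. Once those checks are in place, both cases collapse to elementary computations with the cohomology of $\OO(a,b)$ on products of projective spaces.
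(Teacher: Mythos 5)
Your argument is correct and reaches the paper's conclusion by a genuinely different (though closely related) mechanism. The paper disposes of the statement in one line by invoking Sernesi's criterion \cite[Examples 3.2.11]{Sernesi2006}: unobstructedness of $\mathrm{Def}_X$ follows from $H^1(X,N_{X/\PP})=0$ (smoothness of the Hilbert functor at $[X]$) together with $H^1(\PP,T_{\PP}|_X)=0$ (smoothness of the forgetful morphism from embedded to abstract deformations). You instead compute the intrinsic obstruction space $T^2(X)=\operatorname{Ext}^2(\mathbb{L}_X,\OO_X)$ directly from the transitivity triangle of the cotangent complex for an LCI embedding, which trades the paper's condition $H^1(T_{\mathrm{amb}}|_X)=0$ for $H^2(T_{\mathrm{amb}}|_X)=0$; both vanishings hold here and both routes collapse to the same K\"unneth computations with line bundles on products of projective spaces. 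Your handling of the second case is in fact more careful than the paper's: the $(2,2)$ divisors in question do meet the vertex locus $\{[0:0:1]\}\times\PP^2$ (setting $u=v=0$ in $s(w_0w_1+w_2^2)+g_{2,2}$ leaves the conic $w_0w_1+w_2^2=0$), where $T_{\PP(1,1,2)\times\PP^2}$ fails to be locally free, so re-embedding via the Veronese $\PP(1,1,2)\hookrightarrow\PP^3$ and realising $X$ as a $(2,0),(1,2)$ complete intersection in the smooth ambient $\PP^3\times\PP^2$ is a clean way to make the normal-sheaf and tangent-restriction arguments legitimate; the regular-sequence check you flag is immediate since $\OO_{\PP^3\times\PP^2}$ is Cohen--Macaulay and the two equations cut out the expected codimension. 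The only blemish is a phrasing slip: $T^2(X)$ contains the obstructions to \emph{extending} deformations over small extensions, not obstructions to first-order deformations (which are classified by $T^1$ and always exist); this does not affect the argument.
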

\begin{proof}
    The statement is an easy application of \cite[Examples 3.2.11]{Sernesi2006} (itself derived from \cite{KodairaSpencer1958}) by noting that in both cases we have $H^1(X, N_{X/\PP}) = 0$ (where $\PP$ denotes the corresponding product of (weighted) projective spaces $X$ is embedded in) and $H^1(\PP, T_{\PP}|_X) = 0$.
\end{proof}
\begin{cor}\label{cor:stack-smooth}
    The K-moduli stack $\M^K_{\textup{№3.3}}$ is smooth, and its good moduli space $M^K_{\textup{№3.3}}$ is normal. Moreover,  $\M^K_{\textup{№3.3}}$ is a smooth connected component of $\M^K_{3, 18}$.
\end{cor}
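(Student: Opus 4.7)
The plan is to derive all three claims formally from Corollary \ref{cor:main}, Proposition \ref{deformation}, and the classification results of Section \ref{sec: K-ss limits}, using the étale local structure of K-moduli stacks.

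First, I would establish smoothness of the stack. By Corollary \ref{cor:main}, every closed point $[X]\in \M^K_{\textup{№3.3}}$ is represented by a variety of one of the two explicit types, and Proposition \ref{deformation} then gives that the versal deformation space $\operatorname{Def}(X)$ is smooth. Combining this with the étale local structure of K-moduli stacks (see e.g.\ \cite{ABHLX20,BHLLX21}), namely that $\M^K_{\textup{№3.3}}$ is étale-locally isomorphic at $[X]$ to the quotient stack $[\operatorname{Def}(X)/\Aut(X)]$ with $\Aut(X)$ reductive by K-polystability, yields that $\M^K_{\textup{№3.3}}$ is smooth. Normality of $M^K_{\textup{№3.3}}$ then follows because, by the local structure of good moduli spaces, $M^K_{\textup{№3.3}}$ is étale-locally $\Spec$ of the invariant ring of a regular ring under a linearly reductive group action, and such invariant rings are normal (in fact Cohen--Macaulay by Hochster--Roberts).

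For the connected component statement, I would verify openness and closedness of the inclusion $\M^K_{\textup{№3.3}}\hookrightarrow \M^K_{3,18}$. Openness is immediate from the smoothness argument above: the étale local model $[\operatorname{Def}(X)/\Aut(X)]$ at every $[X]\in \M^K_{\textup{№3.3}}$ already realises an open neighbourhood inside $\M^K_{3,18}$, because the deformation theory computed in Proposition \ref{deformation} is precisely the $\Q$-Gorenstein deformation theory relevant for the ambient K-moduli stack. Closedness follows from Section \ref{sec: K-ss limits}: Corollary \ref{cor:main} shows that every K-semistable $\Q$-Gorenstein smoothable limit of threefolds in family №3.3 is again one of the two asserted types, so $\M^K_{\textup{№3.3}}$ is closed under specialisation inside the proper stack $\M^K_{3,18}$. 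Since the locus of smooth $(1,1,2)$ divisors in $\PP^1\times\PP^1\times\PP^2$ is irreducible and dense in $\M^K_{\textup{№3.3}}$, the latter is connected, hence a single smooth connected component of $\M^K_{3,18}$. The main subtlety is the closedness step, where one must invoke the full classification of Section \ref{sec: K-ss limits} (including the volume bound reduction of Theorem \ref{intro:thm_vol}) in order to rule out exotic K-semistable limits lying outside the two described families.
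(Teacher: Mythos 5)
Your proposal is correct and follows essentially the same route as the paper: classification via Corollary \ref{cor:main}, unobstructedness via Proposition \ref{deformation}, hence smoothness of $\M^K_{3,18}$ along the №3.3 locus and the connected-component conclusion. The only cosmetic difference is that the paper deduces normality of $M^K_{\textup{№3.3}}$ directly from smoothness of the stack via \cite[Theorem 4.16(viii)]{Alp13}, whereas you spell out the underlying invariant-theory argument; both are fine.
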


\begin{proof}
For any point $[X]\in \M^K_{\textup{№3.3}}$, Corollary \ref{cor:main} implies that $X$ is Gorenstein canonical and isomorphic to either a $(1,1,2)$ divisor in $\PP^1\times \PP^1\times \PP^2$ or a $(2,2)$ divisor in $\PP(1,1,2)\times \PP^2$. Thus the ($\Q$-Gorenstein) deformation of $X$ is unobstructed by Proposition \ref{deformation}. Therefore, the K-moduli stack $\M^K_{3,18}$ is smooth at $[X]$, and hence $\M^K_{\textup{№3.3}}$ is a smooth connected component of $\M^K_{3,18}$. The normality of $M^K_{\textup{№3.3}}$ follows from smoothness of $\M^K_{\textup{№3.3}}$ by \cite[Theorem 4.16(viii)]{Alp13}. 
\end{proof}

\subsection{\texorpdfstring{GIT of (1,1,2) divisors in $\PP^1\times \PP^1\times \PP^2$}{GIT of (1,1,2) divisors in P1xP1xP2}}\label{sec: GIT1}
In this section, we will give a full description of the GIT of $(1,1,2)$ divisors in $\PP^1\times \PP^1\times \PP^2$. We will do so computationally, using the computer code \cite{GIT_code} and the methods developed in \cite{karagiorgis2023gitstabilitydivisorsproducts}. 

Let $G \coloneqq \SL(2) \times \SL(2)\times \SL(3)$, and $V \coloneqq |\OO_{\PP^1 \times\PP^1 \times\PP^2}(1,1,2)|$. We will study the GIT quotient $\mathbb{P} V^* \sslash G$ computationally.

Recall that we denote a normalised one-parameter subgroup $\lambda$ in $G$ by $$\lambda = (u,-u,v,-v,s_0,s_1, s_2).$$ By our algorithm, we obtain $S_{1,1,1}^{1,1,2}$ as in \cite[Definition 3.1, Theorem 3.2] {karagiorgis2023gitstabilitydivisorsproducts}. This set has 1563 elements, which we won't list here. The relevant one-parameter subgroups that give maximal semi-destabilised sets are 
\begin{align*}
\lambda_0 &= (0, 0, 0, 0, 1, 0, -1),\\
\lambda_1 &= (3, -3, 0, 0, 2, -1, -1),\\
\lambda_2 &= (1, -1, 1, -1, 0, 0, 0),\\
\lambda_3 &= (0, 0, 2, -2, 1, 0, -1),\\
\lambda_4 &= (1, -1, 1, -1, 1, 0, -1)\\
\lambda_5 &= (1, -1, 3, -3, 2, 0, -2)\\
\lambda_6 &= (1, -1, 1, -1, 2, 0, -2)\\
\lambda_7 &= (0, 0, 3, -3, 2, 2, -4)\\
\end{align*}

These give eight separate maximal semi-destabilising sets $N^{\oplus}(\lambda_i)$, which we will analyse in order to determine the stable, polystable and semistable orbits. Note that only the sets generated by $\lambda_0$, $\lambda_2$, $\lambda_4$ and $\lambda_5$ and $\lambda_6$ are strictly semistable. We first prove the following.

\begin{lemma}\label{singular 1,1,2 divisors}
    Let $X$ be a singular $(1,1,2)$ divisor in $\PP^1_\mathbf{x} \times\PP^1_{\mathbf{y}} \times\PP^2_\mathbf{z}$. Then $X = \bV(f)$ is given, up to projective equivalence, by 
    $$f = x_0y_0f_2(\mathbf{z})+x_1y_0g_2(\mathbf{z})+x_0y_1h_2(\mathbf{z})$$
    where $f_2$, $g_2$ and $h_2$ are homogeneous polynomials of degree $2$ in variables $\mathbf{z} = (z_0,z_1,z_2)$, or as a degeneration of the above.
\end{lemma}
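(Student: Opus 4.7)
\emph{Proof proposal.} The strategy is to exploit the $G=\PGL(2)\times\PGL(2)\times\PGL(3)$-action to move a singular point of $X$ into a standard position, and then extract coefficient constraints from the Jacobian criterion. Since $G$ acts transitively on $\PP^1\times\PP^1\times\PP^2$, we may assume that $X$ is singular at $p_0:=([0{:}1]_{\mathbf x},[0{:}1]_{\mathbf y},[0{:}0{:}1]_{\mathbf z})$. Expanding the defining polynomial in the bigraded basis on $\PP^1\times\PP^1$,
$$f=x_0y_0\,f_2(\mathbf z)+x_0y_1\,h_2(\mathbf z)+x_1y_0\,g_2(\mathbf z)+x_1y_1\,k_2(\mathbf z),$$
with $f_2,g_2,h_2,k_2$ quadrics in $\mathbf z$, allows us to read off the singularity conditions at $p_0$.

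Evaluating $f$ and its partial derivatives at $p_0$ and using Euler's identity in each of the three graded pieces to eliminate the redundant conditions, one checks that the singularity of $X$ at $p_0$ is equivalent to the five independent conditions
$$k_2(0,0,1)=g_2(0,0,1)=h_2(0,0,1)=0,\qquad \partial_{z_i}k_2(0,0,1)=0\text{ for }i=0,1.$$
Together with Euler's identity applied to the quadric $k_2$, these force $k_2\in\C[z_0,z_1]_2$, i.e., $k_2$ has no $z_2$-dependence; equivalently $k_2$ is a quadric of rank $\le 2$ with $[0{:}0{:}1]$ in its singular locus. No constraints are imposed on $f_2$, and $g_2,h_2$ are constrained only to vanish at $(0,0,1)$.

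To remove the residual $x_1y_1\,k_2$ term, consider the one-parameter subgroup $\lambda(t)\subset\SL(2)\times\SL(2)\subset G$ acting as $x_1\mapsto t^{-1}x_1$, $y_1\mapsto t^{-1}y_1$, with all other variables fixed. One computes
$$\lambda(t)\cdot f=x_0y_0\,f_2+t^{-1}\bigl(x_0y_1\,h_2+x_1y_0\,g_2\bigr)+t^{-2}\,x_1y_1\,k_2,$$
so the $x_1y_1\,k_2$ contribution decays fastest and, after an appropriate rescaling, the limit $t\to\infty$ produces a form $f_\infty=x_0y_0\,f_2+x_0y_1\,h_2+x_1y_0\,g_2$ of the stated shape inside the orbit closure $\overline{G\cdot[f]}\subset\PP V$. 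Thus $X=\bV(f)$ is either directly projectively equivalent to a variety of the stated shape (when $k_2=0$ already) or is linked to $\bV(f_\infty)$ through a GIT limit, which we interpret as the ``degeneration'' clause of the statement.

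The chief technical obstacle is the direction of the ``or as a degeneration'' qualifier: the 1-PS argument above realises $\bV(f_\infty)$ as a degeneration of $\bV(f)$, so a careful reinterpretation (or a complementary specialisation argument in the reverse direction, using the $G$-action to extend the normal form by adding a $k_2$-perturbation) is required to show that the general singular $X$ lies in the degeneration closure of the stated normal form. Additional care is needed to handle non-generic singular configurations such as reducible, non-reduced, or non-isolated singular $X$, and to verify that the 1-PS limit yields a nondegenerate $f_\infty$ rather than collapsing onto a lower-dimensional stratum of $\PP V$.
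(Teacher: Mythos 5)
The paper's own proof of this lemma is literally ``Left to the reader,'' so there is no argument to compare against; I assess yours on its own merits. Your first two steps are correct and are the right way to start: after moving the singular point to $p_0=([0{:}1],[0{:}1],[0{:}0{:}1])$, the Jacobian conditions together with the Euler relations say exactly that $g_2$ and $h_2$ vanish at $(0,0,1)$ and that $k_2\in\C[z_0,z_1]_2$. The gap is the step that removes the residual term $x_1y_1k_2$. For your one-parameter subgroup one has $\lambda(t)\cdot f=x_0y_0f_2+t^{-1}(x_0y_1h_2+x_1y_0g_2)+t^{-2}x_1y_1k_2$, and the limit of $[\lambda(t)\cdot f]$ in $\PP V$ as $t\to\infty$ is $[x_0y_0f_2]$ whenever $f_2\neq0$: a projective rescaling can normalise only one $t$-weight at a time, so no rescaling retains both the $t^0$ and the $t^{-1}$ terms while killing the $t^{-2}$ term. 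Your claimed $f_\infty$ is therefore not produced by this $\lambda$, and the unresolved issue you flag about the direction of ``degeneration'' is a genuine additional problem.

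More seriously, your (correct) Step 2 shows that the lemma cannot hold on its natural reading, so no repair of the limiting step is possible. A form of the displayed shape is precisely one vanishing identically on the fibre $\{([0{:}1],[0{:}1])\}\times\PP^2$; the locus of such forms is a $\PP^{17}$, its $G$-sweep is the closed image of a $\PP^{17}$-bundle over $\PP^1\times\PP^1$ and hence has dimension $19$, and passing to degenerations cannot increase this. By contrast the discriminant in $\PP V\cong\PP^{23}$ has dimension $22$: your five independent linear conditions cut out a $\PP^{18}$ over each point of the four-dimensional $\PP^1\times\PP^1\times\PP^2$, and the resulting incidence variety maps generically finitely onto the discriminant. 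Hence a general singular $(1,1,2)$ divisor --- one node and a nonzero $x_1y_1k_2(z_0,z_1)$ term --- is neither projectively equivalent to, nor a degeneration of, the stated normal form; the correct normal form is the $19$-monomial family you derived, with the $x_1y_1k_2(z_0,z_1)$ term retained. Note finally that what Proposition~\ref{stable divisors} actually invokes is the converse implication, namely that every form supported on $N^{\oplus}(\lambda_2)$ is singular (true: it is singular over $([0{:}1],[0{:}1])$ at the points of $g_2=h_2=0$, which is nonempty by B\'ezout), and neither the lemma as stated nor your argument addresses that direction.
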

\begin{proof}
Left to the reader.
\end{proof}

\begin{prop}\label{stable divisors}
    A $(1,1,2)$ divisor $X$ in $\PP^1_\mathbf{x} \times\PP^1_{\mathbf{y}} \times\PP^2_\mathbf{z}$ is GIT stable if and only if it is smooth.
\end{prop}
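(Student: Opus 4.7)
The plan is to argue by two inclusions, combining the normal form from Lemma \ref{singular 1,1,2 divisors} with the enumeration of maximal semi-destabilising one-parameter subgroups computed via \cite{GIT_code, karagiorgis2023gitstabilitydivisorsproducts}.

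First I would prove that $X$ singular implies $X$ is not GIT stable. By Lemma \ref{singular 1,1,2 divisors}, after acting by $G$ we may write
$$
f \;=\; x_0 y_0 f_2(\mathbf{z}) \;+\; x_1 y_0 g_2(\mathbf{z}) \;+\; x_0 y_1 h_2(\mathbf{z}),
$$
so no monomial of $f$ is divisible by $x_1 y_1$. Under the one-parameter subgroup $\lambda_2 = (1,-1,1,-1,0,0,0)$, every monomial $x_\alpha y_\beta z_i z_j$ has weight $\lambda_2(x_\alpha)+\lambda_2(y_\beta) \in \{2,0,0,-2\}$, and the missing monomials are exactly those of weight $-2$. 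Hence every monomial of $f$ has non-negative $\lambda_2$-weight, so $\mu(f,\lambda_2)\geq 0$ and the Hilbert--Mumford criterion rules out GIT stability. The same subgroup $\lambda_2$ handles the degeneration case in Lemma \ref{singular 1,1,2 divisors}, since the locus of polynomials lacking an $x_1 y_1$ term is Zariski closed.

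For the converse, I would show that for each $\lambda_i$ in the enumeration ($i=0,\ldots,7$), every $f\in N^{\oplus}(\lambda_i)$ defines a singular divisor; since the non-stable locus is the $G$-saturation of $\bigcup_i N^{\oplus}(\lambda_i)$, this forces every stable divisor to be smooth. For each $\lambda_i$ the explicit monomial list of $N^{\oplus}(\lambda_i)$ is produced by the algorithm of \cite{karagiorgis2023gitstabilitydivisorsproducts}, and the goal is to identify the attracting fixed flag $p_i$ of $\lambda_i$ and check that every admissible monomial vanishes to order $\geq 2$ at $p_i$, forcing $f$ and all of its partial derivatives to vanish there. For instance, for $\lambda_0=(0,0,0,0,1,0,-1)$ the admissible monomials in $\mathbf{z}$ lie in the ideal $(z_0,z_1)$, so $V(f)$ contains the surface $\PP^1\times\PP^1\times\{[0\!:\!0\!:\!1]\}$ and a direct computation of partials shows that $V(f)$ is singular along the $(1,1)$-curve on this surface cut out by the coefficient bilinear form of $z_0 z_2$ in $f$; for $\lambda_2$ the admissible monomials lie in $(x_0, y_0)$ and the singular locus of $V(f)$ along $\{[0\!:\!1]\}\times\{[0\!:\!1]\}\times\PP^2$ is the intersection of the two plane conics $g_2=h_2=0$ in $\PP^2$, which is non-empty. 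The remaining cases $\lambda_1,\lambda_3,\lambda_4,\lambda_5,\lambda_6,\lambda_7$ are entirely analogous, with the singular subscheme supported at the fixed flag determined by $\lambda_i$.

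The main obstacle is purely bookkeeping: the verification must be repeated for each of the eight one-parameter subgroups, and the admissible monomial list (and hence the location of the singular subscheme) is different in each case. The explicit lists produced by the algorithm of \cite{karagiorgis2023gitstabilitydivisorsproducts} reduce the problem to a finite sequence of partial-derivative computations, each of which is elementary once the fixed flag of $\lambda_i$ has been identified.
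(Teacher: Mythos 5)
Your proposal is correct and follows essentially the same route as the paper: both directions rest on the computational classification of the maximal semi-destabilising sets $N^{\oplus}(\lambda_i)$ from \cite{karagiorgis2023gitstabilitydivisorsproducts}, with the identification of $N^{\oplus}(\lambda_2)$ with the normal form of Lemma \ref{singular 1,1,2 divisors} giving ``singular $\Rightarrow$ not stable''. You are in fact more explicit than the paper about the converse implication: the paper's proof only examines $N^{\oplus}(\lambda_2)$ and implicitly defers the singularity of the remaining seven sets to the case analysis in Proposition \ref{semistable divisors}, whereas you correctly note that all eight sets must be checked to be singular (and your sample verifications for $\lambda_0$ and $\lambda_2$ are accurate); the only slip is cosmetic, namely that your second step yields ``smooth $\Rightarrow$ stable'' rather than ``stable $\Rightarrow$ smooth'' as written.
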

\begin{proof}
    Let $X$ be not stable. Then by \cite[Theorem 3.2, Theorem 3.4]{karagiorgis2023gitstabilitydivisorsproducts} $X$ is not stable if and only if $X=\bV(f)$ for some $f$ generated by the monomials of one of the eight separate maximal semi-destabilising sets $N^{\oplus}(\lambda_i)$. Consider $N^{\oplus}(\lambda_2)$. Then, 
    $$f = x_0y_0f_2(\mathbf{z})+x_1y_0g_2(\mathbf{z})+x_0y_1h_2(\mathbf{z})$$
    and by Lemma \ref{singular 1,1,2 divisors} we conclude that $X$ must be singular. Hence, $X$ is not stable if and only if $X$ is singular, and thus we obtain the proof.
\end{proof}

Now we will classify semistable and polystable elements.

\begin{prop}\label{semistable divisors}
   A $(1,1,2)$ divisor $X$ in $\PP^1_\mathbf{x} \times\PP^1_{\mathbf{y}} \times\PP^2_\mathbf{z}$ is GIT strictly semistable if and only if $X$ is singular and has
   \begin{enumerate}
       \item non-isolated singularities of multiplicity $2$, or
       \item 12 $A_1$ singulatities, or
       \item one $A_3$ singularity, or
       \item one $A_3$ and one $A_1$ singularity, or
       \item one $D_4$ singularity.
   \end{enumerate}
\end{prop}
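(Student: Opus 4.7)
The approach is to combine the computational classification of maximal semi-destabilising sets $N^{\oplus}(\lambda_i)$ coming from \cite{karagiorgis2023gitstabilitydivisorsproducts} with explicit local analysis of the singularities of the supported divisors. By Proposition \ref{stable divisors} any strictly semistable $X$ is singular, and by the Hilbert--Mumford criterion there must exist a one-parameter subgroup $\lambda$ with $\mu(X,\lambda)=0$, equivalently a $g\in G$ such that the monomial support of $g\cdot f$ lies in $N^{\oplus}(\lambda)$, and at least one supported monomial is a $\lambda$-weight zero monomial. The algorithmic output in Section \ref{sec: GIT1} reduces the first condition to $\lambda\in\{\lambda_0,\lambda_2,\lambda_4,\lambda_5,\lambda_6\}$; the other three listed 1-PSs only yield unstable supports, so they can be discarded.

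The plan is then to treat each of the five strictly semistable 1-PSs in turn. For each $\lambda_i$, I first list explicitly the monomials in $V=|\OO(1,1,2)|$ of non-positive $\lambda_i$-weight and, among those, the ones of weight exactly zero. This writes a general $\lambda_i$-supported divisor $X=\bV(f)$ in a canonical normal form. I then compute the Jacobian ideal of $f$ along the fixed locus of $\lambda_i$ (where, by the weight inequalities, the singularities are forced to concentrate) and classify the resulting singularities via the Arnold normal-form recipe: corank and Milnor number are easy to read from the normal form, which distinguishes $A_1$, $A_3$, $D_4$ and non-isolated multiplicity-$2$ singularities. In parallel, for $\lambda_i$ with a large orbit of fixed points in $\PP^1\times\PP^1\times\PP^2$ (notably $\lambda_2$), I use the conic bundle structure given by $\pi_3:X\to \PP^2$ with smooth quartic discriminant to obtain the count of $12$ $A_1$ singularities as the intersection of the singular fibres with the vanishing of $A(x,y)$ in the normal form. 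The forward direction is then a bookkeeping match of the five normal forms to the five listed singularity types.

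For the converse, I exhibit, for each of the five singularity types, an explicit $X=\bV(f)$ whose monomial support lies in $N^{\oplus}(\lambda_i)$ for the corresponding $\lambda_i$, thereby witnessing $\mu(X,\lambda_i)\le 0$ and, since at least one monomial has weight zero, $\mu(X,\lambda_i)=0$. Combined with the algorithmic check that no other 1-PS destabilises $X$ (which follows from the fact that $X$ is not in the unstable locus defined by the remaining three maximal sets, since its support avoids them), this gives $X$ strictly semistable. The normalisation of the 1-PSs under the Weyl group action justifies that it suffices to check only the enumerated $\lambda_i$.

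The main obstacle will be the local singularity analysis for the richer sets $N^{\oplus}(\lambda_2)$, $N^{\oplus}(\lambda_4)$ and $N^{\oplus}(\lambda_5)$, where many degenerations of the normal form coexist. In particular, distinguishing an $A_3$ singularity from an $A_3+A_1$ configuration and identifying the $D_4$ locus require computing the $3$-jet of $f$ at the relevant fixed points and showing that further vanishing conditions on the coefficients of $N^{\oplus}(\lambda_i)$ force the additional singular point or the corank-$2$ degeneration; this will be done by reducing the defining polynomial to a Weierstrass-type form in a local chart of $\PP^1\times\PP^1\times\PP^2$ near the $\lambda_i$-fixed torus orbit and invoking the contact classification of ADE singularities. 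The counting of $12$ $A_1$ points uses Noether's formula for the conic bundle discriminant of degree $4$ together with a double-cover count in the $\PP^1\times\PP^1$ rulings, which is the only genuinely non-local ingredient.
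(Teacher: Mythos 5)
Your proposal follows essentially the same route as the paper: reduce via the computational output of \cite{karagiorgis2023gitstabilitydivisorsproducts} to the five strictly semistable one-parameter subgroups $\lambda_0,\lambda_2,\lambda_4,\lambda_5,\lambda_6$, write the normal form of $f$ supported on each $N^{\oplus}(\lambda_i)$, and classify the resulting singularities case by case. The only cosmetic difference is in the $\lambda_2$ case, where the paper counts the twelve $A_1$ points directly as the intersection points of the three pairs of quadrics $f_2\cap g_2$, $g_2\cap h_2$, $f_2\cap h_2$ (four points each) rather than via the conic-bundle discriminant; both yield the same count.
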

\begin{proof}
Recall by \cite[Theorems 3.2, 3.4 and 4.6]{karagiorgis2023gitstabilitydivisorsproducts} that $X = \bV(f)$ is strictly semistable, if it is not stable, i.e. the monomials of $f$ belong to some $N^{\oplus}(\lambda_i)$, for some $\lambda_i\in S_{1,1,1}^{1,1,2}$, and it is strictly semistable by the centroid criterion. As mentioned before, only the sets generated by $\lambda_0$, $\lambda_2$, $\lambda_4$ and $\lambda_5$ and $\lambda_6$ are strictly semistable, so we analyse these to conclude. 

$N^{\oplus}(\lambda_0)$: Here $f$ is given by  $$f = \sum_{i,j\in \{0,1\}, 0\leq k\leq 3} x_{i}y_j(q_2^k(z_0,z_1)+(-1)^kz_0z_2),$$
where $q_2^i$ is a quadric form (this notation will be used throughout). $X$ is an irreducible singular threefold, with singularities at $[[a:b], [c:d], [0:0:1]]$, for arbitrary $a$, $b$, $c$, $d$. The singularities have multiplicity $2$ and are non-isolated.

$N^{\oplus}(\lambda_2)$: $f$ is given by $$f = x_0y_0f_2(\mathbf{z})+x_1y_0g_2(\mathbf{z})+x_0y_1h_2(\mathbf{z})$$ and $X$ is a singular irreducible threefold with $12$ $A_1$ singularities at points $[[0:1],[0:1],[a:b:c]]$, $[[0:1],[1:0],[a:b:c]]$ and $[[1:0],[0:1],[a:b:c]]$, where $[a:b:c]$ is a point of the intersections $f_2\cap g_2$, $h_2\cap g_2$ and $f_2\cap h_2$, respectively, giving $12$ singular points in total.

$N^{\oplus}(\lambda_4)$: $f$ is given by $$x_0y_0f_2(\mathbf{z})+\sum_{i\neq j}x_iy_j(q_2^1(z_0,z_1)+a_iz_0z_2)+bx_1y_1z_0^2,$$ 
and $X$ is a singular irreducible threefold with an $A_3$ singularity at the point $[[0:1],[0:1],[0:0:1]]$.

$N^{\oplus}(\lambda_5)$: $f$ is given by
$$x_0y_0f_2(\mathbf{z})+x_0y_1z_0l(z_0,z_1)+x_1y_0(q_2^1(z_0,z_1)+z_2l(z_0,z_1))+x_1y_1z_0^2,$$ 
where $l$ denotes a linear form, and $X$ is the irreducible singular threefold with one $A_3$ singularity at the point $[[0:1],[0:1],[0:0:1]$ and one $A_1$ singularity at the point $[[0:1],[0:1],[0:1:0]]$.

$N^{\oplus}(\lambda_6)$: $f$ is given by
$$x_0y_0(q_2^1(z_0,z_1)+z_2l(z_0,z_1))+ x_1y_0(q_2^2(z_0,z_1)+az_2z_0)+ x_0y_1(q_2^3(z_0,z_1)+bz_2z_0) +x_1y_1z_0l(z_0,z_1), $$
and $X$ is the irreducible singular threefold with a $D_4$ singularity at $[[1:0],[1:0],[0:0:1]]$.
\end{proof}

\begin{prop}\label{polystable divisors}
   A $(1,1,2)$ divisor $X$ in $\PP^1_\mathbf{x} \times\PP^1_{\mathbf{y}} \times\PP^2_\mathbf{z}$ is GIT strictly polystable if and only if $X$ is singular and 
   \begin{enumerate}
       \item has two non-isolated singularities of multiplicity $2$, or
       \item $X$ is isomorphic to the unique special non-reduced and reducible threefold $\wX = \bV(f)$ with $\Aut(\wX) = \SL(2)^3$, where 
       $$\tilde{f} = (z_1^2+z_0z_2)(x_0y_0 + x_1y_0+x_0y_1+x_1y_1)$$
       \item has 8 $A_1$ singularities, or
       \item has two $A_3$ singularities, or
       \item has two $A_3$ and two $A_1$ singularities, or
       \item has two $D_4$ singularities.
   \end{enumerate}
\end{prop}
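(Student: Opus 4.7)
The plan is to work case-by-case with the five maximal strictly-semistable sets identified in Proposition~\ref{semistable divisors}, namely $N^{\oplus}(\lambda_i)$ for $i\in\{0,2,4,5,6\}$, and to single out the unique closed $G$-orbit in the closure of each. By the Hilbert--Mumford criterion for polystability, every strictly polystable orbit in $N^{\oplus}(\lambda_i)$ admits a representative equal to its own $\lambda_i$-limit $\lim_{t\to 0}\lambda_i(t)\cdot f$; equivalently, its defining polynomial is supported on the monomials of $\lambda_i$-weight zero, so is fixed by the one-parameter subgroup generated by $\lambda_i$. So the first task is, for each such $\lambda_i$, to enumerate these zero-weight monomials, write down the most general polynomial supported on them, and verify by direct Jacobian computation that the resulting singularity pattern matches the corresponding entry of the statement.

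For $\lambda_0=(0,0,0,0,1,0,-1)$ the zero-weight monomials are $\{z_1^2,\,z_0z_2\}$ times an arbitrary bilinear form in $(\mathbf x,\mathbf y)$, giving
$$f=c_1(\mathbf x,\mathbf y)\,z_1^2+c_2(\mathbf x,\mathbf y)\,z_0z_2,\qquad c_1,c_2\in H^0(\PP^1\times\PP^1,\OO(1,1)).$$
A straightforward Jacobian calculation shows that for generic $(c_1,c_2)$ the singular locus is the union of two $(1,1)$-curves in $\PP^1\times\PP^1$ lying above the toric fixed points $\mathbf z=[0:0:1]$ and $\mathbf z=[1:0:0]$, each of multiplicity two, matching case~(1). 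Using the centraliser of $\lambda_0$ in $G$ to rescale to $c_1=c_2=:c$ yields the factorisation $f=c(\mathbf x,\mathbf y)\cdot(z_1^2+z_0z_2)$; a further specialisation of $c$ to the unique (closed, rank-one) $\SL(2)\times\SL(2)$-orbit on bilinear forms recovers $\tilde f=(z_1^2+z_0z_2)(x_0+x_1)(y_0+y_1)$, which is the special $\wX$ of case~(2), and the three smooth rational factors $\PP^1_{\mathbf x}$, $\PP^1_{\mathbf y}$ and $V(z_1^2+z_0z_2)\simeq\PP^1$ each contribute an $\SL(2)$ factor to $\Aut(\wX)$. For $\lambda_2$, the zero-weight form $f=x_0y_1\,h_2(\mathbf z)+x_1y_0\,g_2(\mathbf z)$ has an involutive $(\mathbf x,\mathbf y)$-swap symmetry forcing singularities at both toric fixed loci $[0:1]\times[0:1]$ and $[1:0]\times[1:0]$, each contributing the four $A_1$ points of $g_2\cap h_2\subset\PP^2$, yielding the eight $A_1$ singularities of case~(3). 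The analogous recipe applied to $\lambda_4,\lambda_5,\lambda_6$ produces zero-weight forms whose induced involutive centraliser symmetrically doubles the number of singular points listed in Proposition~\ref{semistable divisors}, giving two $A_3$; two $A_3$ plus two $A_1$; and two $D_4$ singularities, respectively, for cases~(4)--(6).

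The main obstacle is the explicit verification of the ADE singularity type at each isolated singular point of the polystable form. After writing the zero-weight polynomial, one must choose an affine chart, compute local coordinates, and reduce the local equation to ADE normal form via a sequence of analytic changes of variable; the $D_4$ case coming from $\lambda_6$ is the most involved, as it requires splitting off a rank-three quadratic part and then handling a residual cubic carefully. A secondary subtlety is confirming uniqueness and closedness of the special orbit~$\wX$: since $\tilde f$ is supported on the zero-weight subspace of a maximal torus inside the centraliser of $\lambda_0$ in $G$, its orbit is automatically closed in the semistable locus, and any further degeneration of a polystable family in $N^{\oplus}(\lambda_i)$ must factor through it, which also pins down $\Aut(\wX)$ on the nose as claimed.
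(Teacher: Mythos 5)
Your overall strategy is the same as the paper's: by the Kempf/Hilbert--Mumford description of closed orbits (the paper cites \cite[Proposition 3.5]{karagiorgis2023gitstabilitydivisorsproducts}), the strictly polystable divisors are exactly those supported on the zero-weight subspaces $N^{0}(\lambda_i)$ for the five strictly semistable one-parameter subgroups $\lambda_0,\lambda_2,\lambda_4,\lambda_5,\lambda_6$, and one then reads off the singularities case by case. Your identification of the zero-weight monomials for $\lambda_0$ and $\lambda_2$ and the resulting singularity counts (two non-isolated multiplicity-$2$ loci over $[0:0:1]$ and $[1:0:0]$; eight $A_1$ points sitting over the two torus-fixed points $([1:0],[1:0])$ and $([0:1],[0:1])$) are correct.

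There is, however, one genuinely wrong step in your treatment of $N^0(\lambda_0)$. You claim that the centraliser of $\lambda_0$ in $G$ lets you ``rescale to $c_1=c_2$'' in $f=c_1(\mathbf x,\mathbf y)z_1^2+c_2(\mathbf x,\mathbf y)z_0z_2$. The centraliser of $\lambda_0$ inside $\SL(3)$ is (up to finite index) the diagonal torus, which only multiplies $c_1$ and $c_2$ by scalars; if $c_1$ and $c_2$ are linearly independent bilinear forms --- the generic situation --- no element of the centraliser can make them proportional, let alone equal. Indeed the generic member of $N^0(\lambda_0)$ is \emph{irreducible} (this is case (1) of the statement), whereas the factorisable member $c\cdot(z_1^2+z_0z_2)$ is \emph{reducible}, so the two lie in distinct orbits, distinguished already by irreducibility; the special threefold $\wX$ of case (2) is a separate, more degenerate closed orbit inside $N^0(\lambda_0)$, not a normal form for the generic one. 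Since both types appear in the final list the conclusion survives, but the reduction as written is false and should be replaced by a case split on whether $c_1,c_2$ are proportional. Separately, for cases (4)--(6) you only assert that the centraliser ``symmetrically doubles'' the singularities of the semistable normal forms; to complete the argument you should write out the explicit zero-weight polynomials (as the paper does, e.g.\ $ax_0y_0z_2^2+\sum_{i\neq j}x_iy_j(z_1^2+b_iz_0z_2)+cx_1y_1z_0^2$ for $\lambda_4$) and verify the ADE types at each isolated singular point, since this is where the actual content of the classification lies.
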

\begin{proof}
    Recall by \cite[Proposition 3.5]{karagiorgis2023gitstabilitydivisorsproducts} that $X = \bV(f)$ is strictly polystable, if it is the monomials of $f$ belong to some $N^{0}(\lambda_i)$, for some $\lambda_i\in S_{1,1,1}^{1,1,2}$, such that the corresponding $N^{\oplus}(\lambda_i)$, corresponds to a strictly semistable $X$ by the centroid criterion. Thus, by the above discussion, we have to analyse only the sets $N^0(\lambda_i)$ generated by $\lambda_0$, $\lambda_2$, $\lambda_4$ and $\lambda_5$ and $\lambda_6$ to conclude.

    $N^{0}(\lambda_0)$: Here $f$ is given by  $$f = \sum_{i,j\in \{0,1\}, 0\leq k\leq 3} x_{i}y_j(a_1^kz_1^2+a_2^kz_0z_1).$$ In general $X$ is an irreducible singular threefold, with singularities at $[[a:b], [c:d], [0:0:1]]$ and $[[a:b], [c:d], [1:0:0]]$, for arbitrary $a$, $b$, $c$, $d$. The singularities have multiplicity $2$ and are non-isolated.

Notice, however, that there exists a special orbit, which after projective equivalence is given by $$\tilde{f} = (z_1^2+z_0z_2)(x_0y_0+x_1y_0+x_0y_1+x_1y_1).$$
This the unique non-reduced reducible threefold $\wX = \bV(\tilde{f})$ such that $\Aut(\wX) = \SL(2)^3$.

$N^{0}(\lambda_2)$: $f$ is given by $$f =x_1y_0g_2(\mathbf{z})+x_0y_1h_2(\mathbf{z})$$ and $X$ is a singular irreducible threefold with $8$ $A_1$ singularities at points $[[0:1],[0:1],[a:b:c]]$ and $[[1:0],[0:1],[a:b:c]]$, where $[a:b:c]$ is a point of the intersections $f_2\cap h_2$.

$N^{0}(\lambda_4)$: $f$ is given by $$ax_0y_0z_2^2+\sum_{i\neq j}x_iy_j(z_1^2+b_iz_0z_2)+cx_1y_1z_0^2,$$ 
and $X$ is a singular irreducible threefold with two $A_3$ singularities at the point $[[0:1],[0:1],[0:0:1]]$ and $[[0:1],[0:1],[1:0:0]]$.

$N^{0}(\lambda_5)$: $f$ is given by
$$ax_1y_1z_0^2 + bx_0y_1z_0z_1 + cx_1y_0z_1z_2 + dx_0y_0z_2^2$$ 
a, b, c, d are arbitrary coefficients, and $X$ is the irreducible singular threefold with two $A_3$ singularities at the points $[[0:1],[0:1],[0:0:1]]$ and $[[0:1],[0:1],[1:0:0]]$ and two $A_1$ singularities at the points $[[0:1],[0:1],[0:1:0]]$ and $[[1:0],[1:0],[0:1:0]]$.

$N^{0}(\lambda_6)$: $f$ is given by
$$ax_0y_0z_1z_2+ x_1y_0(z_1^2+bz_2z_0)+ x_0y_1(z_1^2+cz_2z_0) +dx_1y_1z_0z_1 $$
and $X$ is the irreducible singular threefold with two $D_4$ singularities at points $[[1:0],[1:0],[0:0:1]]$ and $[[1:0],[1:0],[1:0:0]]$.
\end{proof}

\subsection{\texorpdfstring{GIT of (2,2) divisors in $\PP(1,1,2)\times \PP^2$}{GIT of (2,2) divisors in P(1,1,2)xP2}}\label{sec: GIT2}
In this section, we will analyse the GIT of $(2,2)$ divisors in $\PP(1,1,2)\times \PP^2$ with no singular point at the vertex of $\PP(1,1,2)$.

Consider the $\SL(2)\times \SL(3)$-action on $H_{2,2}:=\PP H^0(\PP^1\times \PP^2,\mathcal{O}_{\PP^1\times \PP^2}(2,2))$ induced by the natural action of $\SL(2)\times \SL(3)$ on $\PP^1\times \PP^2$. We call $f_{2,2}\in H_{2,2}$ \emph{GIT-(semi/poly)stable} if it is a (semi/poly)stable point under the $\SL(2)\times \SL(3)$-action.

Before we proceed, we will construct the universal families for $(1,1,2)$ divisors in $\PP^1\times \PP^1\times \PP^2$ and $(2,2)$ divisors in $\PP(1,1,2)\times \PP^2$. For the first case, the universal family is given by $\pi\colon\PP^1\times \PP^1\times \PP^2\times H_{1,1,2}\rightarrow H_{1,1,2}$, where $H_{1,1,2}$ is the parameter scheme of $(1,1,2)$ divisors in $\PP^1\times \PP^1\times \PP^2$. Notice that the CM line bundle depends only on one parameter, and is ample by \cite{CP21,XZ21}.

For the latter case. Let $F_{2,2}$ be a normal $(2,2)$ divisor in $\PP(1,1,2)_{x,y,z}\times \PP^2_{w_0,w_1,w_2}$ given by the equation 
$$f_2(\mathbf{w})h_2(x,y)+ zg_2(\mathbf{w}) = 0,$$
where, since $F_{2,2}$ is not singular, $g_2(\mathbf{w})$ is not singular, and hence we may assume that $g_2(\mathbf{w}) = w_0w_1+w_2^2$.

Thus, we can identify $F_{2,2}$ to a point in $\A=H^0(\PP^1_{\mathbf{y}}\times \PP^2_\mathbf{z},\mathcal{O}_{\PP^1\times \PP^2}(2,2))$. Notice there is a natural $\mathbb{G}_m$-action on $\A$ by acting on the first factor of weight $(2,2)$.
Then $H_{2,2}$ is naturally isomorphic to the coarse moduli space of the DM stack $[(\A\setminus\{0\})/\mathbb{G}_m]$. There is a natural $\SL(2)\times \SL(3)$-action on $\A$ induced by the usual $\SL(2)\times \SL(3)$-action on $H^0(\PP^1\times \PP^2,\mathcal{O}(1,1))$. Since this $\SL(2)\times \SL(3)$-action commutes with the $\mathbb{G}_m$-action,
it descends to an $\SL(2)\times \SL(3)$-action on $H_{2,2}$. The above quotient has already been analysed in \cite[\S 3]{devleming2024kmodulispacefamilyconic}. We give the description of semistable and polystable orbits here for completion.

\begin{theorem}[{\cite[Proposition 3.7/Lemma 3.8]{devleming2024kmodulispacefamilyconic}}]\label{GIT ss for 2,2} For $b\in \C$, define the following bidegree $(2,2)$ polynomials:
\[
\begin{aligned}
    f_b^{(1)} & := y_0^2z_1^2+y_0y_1(bz_1^2+z_0z_2)+y_1^2z_1^2\\
    f_b^{(2)} & := y_0^2z_1z_2+y_0y_1(bz_1^2+z_0z_2)+y_1^2z_0z_1\\
    f_b^{(3)} &:= y_0^2z_2^2+y_0y_1(z_1^2+bz_0z_2)+y_1^2z_0^2
\end{aligned}
\]
For each $i=1,2,3$, let $R_b^{(i)}$ be the $(2,2)$-surface defined by $f_b^{(i)}$; then we have the following sets of GIT strictly semistable $(2,2)$-surfaces in $\PP^1_{\mathbf{y}}\times \PP^2_\mathbf{z}$:
$$
\begin{aligned}
    \mathcal{C}_{2\times A_1} & =\left\{\left. R_b^{(1)}\right\vert b\in \C\right\}/\langle b\sim -b\rangle \cup [R_0={y_0y_1(z_0z_2+z_1^2)=0}],\\
    \mathcal{C}_{\text{non-fin}} & =\left\{\left. R_b^{(2)}\right\vert b\in \C\right\}\cup [R_0],\\
    \mathcal{C}_{\text{non-red}} & =\left\{\left. R_b^{(3)}\right\vert b\in \C\right\}/\langle b\sim -b\rangle\cup [R_0].
\end{aligned}
$$
Furthermore, $R_b^{(2)}$ is not in the sme $SL_2\times SL_3$-orbit of $R_{b'}$ for any $b'\neq b$. For $i=1,3$, $R_b^{(i)}$ is in the same $SL_2\times SL_3$-orbit as $R_{b'}^{(i)}$ if and only if $b=\pm b'$.
\end{theorem}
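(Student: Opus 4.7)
The plan is to parallel the Hilbert--Mumford/centroid analysis that was carried out in Section~\ref{sec: GIT1} for the $(1,1,2)$ case, now with $G=\SL(2)\times\SL(3)$ acting on $V=H^0(\PP^1\times\PP^2,\OO(2,2))$. After diagonalising, any normalised one-parameter subgroup (1-PS) takes the form $\lambda=(a,-a;b_0,b_1,b_2)$ with $b_0\ge b_1\ge b_2$ and $b_0+b_1+b_2=0$, and the $\lambda$-weight of the monomial $y_0^iy_1^{2-i}z_0^\alpha z_1^\beta z_2^\gamma$ is $a(2i-2)+b_0\alpha+b_1\beta+b_2\gamma$. Any strictly semistable $f_{2,2}\in V$ is, after $G$-translation, supported in the non-positive weight set $N^{\oplus}(\lambda)$ for some normalised $\lambda$ whose centroid criterion is satisfied.

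First I would feed the $18$ monomials of bidegree $(2,2)$ into the algorithm of \cite[Definition~3.1, Theorem~3.2]{karagiorgis2023gitstabilitydivisorsproducts}, implemented as in \cite{GIT_code}, to enumerate all normalised 1-PS $\lambda$ whose $N^{\oplus}(\lambda)$ is maximal. By the centroid criterion \cite[Theorem~3.4]{karagiorgis2023gitstabilitydivisorsproducts} the strictly semistable orbits are those supported on some maximal $N^{\oplus}(\lambda)$ whose centroid of monomials lies at the origin. Because the weight polytope here is considerably smaller than in Section~\ref{sec: GIT1}, this search should produce exactly three inequivalent maximal destabilisers up to the symmetries of the polytope; call them $\lambda_1,\lambda_2,\lambda_3$.

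Next, for each $\lambda_i$ I would write out the general element of $N^{\oplus}(\lambda_i)$ and, after normalising the coordinates by elements of the centraliser $Z_G(\lambda_i)$, recognise it as the one-parameter family $f_b^{(i)}$ in the statement, together with a single further degeneration to the common polynomial $R_0=y_0y_1(z_0z_2+z_1^2)$. Each $f_b^{(i)}$ lies in the null set $N^0(\lambda_i)$, so is fixed by the destabilising $\mathbb{G}_m$ generated by $\lambda_i$ and is therefore a minimal representative in its semistable orbit closure, yielding the polystable classification. The geometric labels in the theorem (two $A_1$ points, non-finite stabiliser, non-reduced) are then read off from the rank/discriminant of $f_b^{(i)}$ viewed as a pencil of conics on $\PP^2$ parametrised by $[y_0:y_1]$.

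For orbit separation I would compute the finite quotient $N_G(\lambda_i)/Z_G(\lambda_i)\cdot\langle\lambda_i\rangle$ acting on the one-dimensional coefficient line parametrised by $b$. In families (1) and (3), the centraliser contains an involution swapping the two eigenspaces of $\lambda_i$ on $\PP^1$ (respectively on the $z$-coordinates) which sends $b\mapsto -b$, accounting for the identification $b\sim-b$; in family (2) the asymmetry of $N^0(\lambda_2)$ between $z_0$ and $z_2$ precludes such an element, so the $b$-line parametrises pairwise distinct $G$-orbits. The main obstacle is the purely combinatorial book-keeping: certifying exhaustion of all maximal $N^{\oplus}(\lambda)$ and ruling out spurious identifications between the three families; this is naturally resolved by the computer-assisted enumeration of \cite{GIT_code} rather than by hand.
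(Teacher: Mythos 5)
First, a point of comparison: the paper does not prove this statement at all --- it is imported verbatim from the cited reference, and Section~\ref{sec: GIT2} explicitly defers the analysis of the $\SL(2)\times\SL(3)$-action on $(2,2)$ divisors to that source. So there is no in-paper argument to measure you against. What you propose is to reconstruct the result with the same computational Hilbert--Mumford/centroid machinery of \cite{karagiorgis2023gitstabilitydivisorsproducts} that the paper itself deploys for the $(1,1,2)$ case in Section~\ref{sec: GIT1}, and that is a natural and, in spirit, correct route; your weight formula and the overall logic (enumerate maximal $N^{\oplus}(\lambda)$, filter by the centroid criterion, normalise by the centraliser, separate orbits via the normaliser) are the right ingredients.

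As written, however, the proposal is a programme rather than a proof, and the load-bearing steps are asserted rather than carried out. (i) The entire content of the theorem rests on the claim that the enumeration returns \emph{exactly} three inequivalent maximal semi-destabilising sets with centroid at the origin; you defer this to the machine, but without that certified output nothing is established. (ii) Passing from ``general element of $N^{\oplus}(\lambda_i)$'' to the one-parameter normal forms $f_b^{(i)}$, plus the single common degeneration $R_0$, requires showing that the residual $Z_G(\lambda_i)$-action reduces several free coefficients to one modulus $b$; that is genuine normalisation work, not book-keeping. (iii) Lying in $N^0(\lambda_i)$ does not by itself give polystability; you need the closed-orbit criterion (the analogue of the argument via \cite[Proposition~3.5]{karagiorgis2023gitstabilitydivisorsproducts} used in Proposition~\ref{polystable divisors}) applied to the residual centraliser action. (iv) Your orbit-separation argument for family~(2) is actually wrong as stated: $f_b^{(2)}=y_0^2z_1z_2+y_0y_1(bz_1^2+z_0z_2)+y_1^2z_0z_1$ \emph{is} invariant under the involution $y_0\leftrightarrow y_1$, $z_0\leftrightarrow z_2$, so there is no ``asymmetry between $z_0$ and $z_2$'' precluding extra identifications. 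The correct conclusion (no identification $b\sim -b$ in family~(2), but $b\sim -b$ in families~(1) and~(3)) must instead be obtained by computing the full induced action of the normaliser on the coefficient line, or by exhibiting a separating invariant; this step needs to be redone.
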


\begin{theorem}[c.f. {\cite[Theorem 5.1]{devleming2024kmodulispacefamilyconic}}]\label{GIT ps for 2,2}
    The strictly polystable locus in the GIT moduli space $\overline{M}_{(2,2)}^{GIT}$ is the union of three rational curves $\mathcal{C}_{\text{non-fin}}\cup \mathcal{C}_{\text{non-red}}\cup \mathcal{C}_{2\times A_1} $. In particular, $R_3$ is GIT-polystable.
\end{theorem}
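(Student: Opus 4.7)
The plan is to transport the argument of \cite[Theorem 5.1]{devleming2024kmodulispacefamilyconic} into our setting, using the classification of strictly semistable orbits already provided by Theorem \ref{GIT ss for 2,2} as input. Concretely, three things need to be checked: (i) every member of $\mathcal{C}_{2\times A_1}\cup \mathcal{C}_{\text{non-fin}}\cup \mathcal{C}_{\text{non-red}}$ is polystable; (ii) every strictly semistable $(2,2)$-divisor degenerates under a $1$-parameter subgroup to some member of one of the three families; and (iii) each family traces out an honest rational curve in $\overline{M}_{(2,2)}^{GIT}$.

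For (i), I would exhibit, for each $i\in\{1,2,3\}$, an explicit $1$-parameter subgroup $\lambda_i:\mathbb{G}_m\to \SL(2)\times \SL(3)$ such that every monomial appearing in $f_b^{(i)}$ has weight zero under $\lambda_i$; this shows $\lambda_i$ fixes $R_b^{(i)}$ for all $b$, producing a positive-dimensional reductive subgroup inside $\Aut(R_b^{(i)})$. Since the Hilbert--Mumford numerical function $\mu(R_b^{(i)},\lambda_i)=\mu(R_b^{(i)},\lambda_i^{-1})=0$ attains its value at the zero-weight slice $N^0(\lambda_i)$, the orbit $G\cdot R_b^{(i)}$ is closed in the semistable locus, which is exactly polystability by Kempf--Ness.

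For (ii), the input is the list of maximal semi-destabilising $1$-PSs for bidegree $(2,2)$-forms on $\PP^1\times \PP^2$ under $\SL(2)\times \SL(3)$, produced by the algorithm of \cite{karagiorgis2023gitstabilitydivisorsproducts}. For each destabilising $\lambda$, the monomials in $N^{\oplus}(\lambda)\setminus N^0(\lambda)$ can be stripped by flowing with $\lambda(t)$ as $t\to 0$, so the limit lies in $N^0(\lambda)$. A case-by-case inspection of the resulting zero-weight subspaces identifies each such limit, up to the $G$-action, with an element of one of $\{R_b^{(1)}\}$, $\{R_b^{(2)}\}$, or $\{R_b^{(3)}\}$; this forces the polystable locus to be exactly the union of the three families.

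For (iii), the distinctness of orbits inside each family, together with the identifications $b\sim -b$ for $i=1,3$, is already the final assertion of Theorem \ref{GIT ss for 2,2}; the three families meet only at the common degenerate element $R_0$, producing three rational curves in $\overline{M}_{(2,2)}^{GIT}$. The final assertion that $R_3$ is GIT-polystable then follows because $R_3$ (in the notation of the cited reference, the distinguished polystable member corresponding to the $(w_0w_1+w_2^2)$-type slice used to glue our two families in Corollary \ref{cor:main}) lies in one of the three orbits treated in (i). The main obstacle I expect is the combinatorial bookkeeping in step (ii): matching each $N^0(\lambda)$ to one of the three families and checking that no further polystable orbit is overlooked. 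This is precisely the verification carried out in \cite[Proposition~3.7 and Lemma~3.8]{devleming2024kmodulispacefamilyconic}, so the proof essentially reduces to citing that analysis after the preparation in Theorem \ref{GIT ss for 2,2}.
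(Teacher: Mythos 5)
The paper offers no independent proof of this statement: it is imported verbatim from \cite[Theorem 5.1]{devleming2024kmodulispacefamilyconic} (together with Proposition~3.7 and Lemma~3.8 there), and your proposal ultimately does the same, so the overall route matches. One caveat on your step (i): the justification given there is not valid as a standalone argument. The fact that every monomial of $f_b^{(i)}$ has weight zero under a single one-parameter subgroup $\lambda_i$, i.e.\ that $\mu(R_b^{(i)},\lambda_i)=\mu(R_b^{(i)},\lambda_i^{-1})=0$, shows only that $\lambda_i$ fixes the point; it does \emph{not} imply that the orbit is closed in the semistable locus. A point can be fixed by a torus and still degenerate to a strictly smaller orbit under a different one-parameter subgroup; closedness requires that for every $\lambda$ with $\mu=0$ the limit $\lim_{t\to 0}\lambda(t)\cdot R_b^{(i)}$ remain in the orbit, or equivalently polystability of $R_b^{(i)}$ for the centraliser of $\lambda_i$ acting on the zero-weight space $N^0(\lambda_i)$. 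That closedness check is precisely the case analysis carried out in \cite[Proposition 3.7, Lemma 3.8]{devleming2024kmodulispacefamilyconic}, which your step (ii) gestures at; so the argument is sound once you lean on the citation, but the Kempf--Ness sentence in (i) should not be read as a proof of polystability on its own. Finally, note that ``$R_3$'' in the statement is notation carried over from the cited paper rather than defined via Theorem \ref{GIT ss for 2,2}; your identification of it with the distinguished polystable member of one of the three families is the intended reading.
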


\subsection{Luna {\'e}tale slice and Kirwan blow up}

We first recall Luna's theorem for \'etale slices.

\begin{theorem}[Luna's Theorem]
    Let $X$ be an affine variety, $G$ a reductive group acting on $X$ and let $x\in X$ be a point with a closed orbit $G_x$. Then there exists a locally closed subvariety $V \subset X$ satisfying the following properties:
    \begin{enumerate}
        \item $V$ is an affine variety, with $x \in V$,
        \item $V$ is $G_x$-invariant,
        \item if $\psi:G_x\times V \rightarrow X$ is the $G$-morphism induced by the action of $G$ on $X$, then the image of $\psi$ is a saturated open set $U \subset X$,
        \item the map $\psi \big{|}_U$, the restriction of $\psi$ to $U$, is strongly \'etale.
    \end{enumerate}
    Moreover, we say that such a subvariety $V$ is an \emph{\'etale slice}. 
\end{theorem}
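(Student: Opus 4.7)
The plan is to follow Luna's original argument. First, since $X$ is affine and $G$ is reductive, I would equivariantly embed $X$ as a closed $G$-invariant subvariety of some finite-dimensional $G$-representation $W$; this is possible because $\C[X]$ is generated, as a $\C$-algebra, by a finite-dimensional $G$-submodule by reductivity of $G$. This reduces the construction of a slice to the case of a linear $G$-action on $W$, and the slice in $X$ will ultimately be cut out from the slice in $W$ by intersection with $X$.

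Second, since the orbit $G_x$ is closed by hypothesis, Matsushima's theorem guarantees the stabilizer $H := \operatorname{Stab}_G(x)$ is reductive. Using reductivity of $H$, I would choose an $H$-invariant linear complement $N \subset T_x W$ to $T_x(G_x)$. The candidate \'etale slice is then $V := X \cap (x+N)$, restricted to a small $H$-invariant affine neighborhood of $x$. This $V$ is automatically $H$-invariant since both $X$ and $x+N$ are, and it is affine by construction, giving properties (1) and (2).

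Third, I would form the twisted product $G \times^H V$, which exists as an algebraic variety since $H$ is reductive, together with the natural $G$-equivariant morphism $\psi\colon G \times^H V \to X$, $[g,v]\mapsto g\cdot v$. By construction, the differential of $\psi$ at $[e,x]$ identifies with the direct sum map $(\operatorname{Lie}(G)/\operatorname{Lie}(H)) \oplus N \to T_x X$, which is an isomorphism by the choice of $N$. Hence $\psi$ is \'etale at $[e,x]$, and after shrinking $V$ in an $H$-invariant way one may assume $\psi$ is \'etale everywhere on $G \times^H V$. The image $U$ is then open and $G$-invariant, and further $H$-equivariant shrinking of $V$ arranges $U$ to be saturated with respect to the GIT quotient $X \to X\sslash G$, yielding property (3) stated in the conclusion.

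The main obstacle is condition (4), the strongly \'etale property, which is the content of Luna's fundamental lemma. Concretely, one must arrange that $\psi$ sends closed $G$-orbits to closed $G$-orbits and induces an isomorphism $V\sslash H \xrightarrow{\sim} U\sslash G$ of GIT quotients, not merely an \'etale morphism on invariants. The key point is that, because $G\cdot x$ is closed and $H$ is reductive, the induced morphism on invariant rings is \'etale at the image of $x$; combining this with the \'etaleness of $\psi$ established above, a descent argument allows one to shrink $V$ further to a saturated $H$-invariant neighborhood of $x$ on which $\psi$ becomes strongly \'etale. Controlling this shrinking while simultaneously preserving $H$-invariance, saturation, and the closed-orbit condition is the delicate technical core of the proof.
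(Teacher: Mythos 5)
The paper does not prove this statement at all: it is recalled as a classical result (Luna's \'etale slice theorem) and used as a black box to compute the slice in Lemma \ref{lem: luna slice}, so there is no in-paper argument to compare yours against. Your outline is the standard Luna proof --- equivariant embedding into a $G$-module $W$, reductivity of the stabiliser $H=\operatorname{Stab}_G(x)$ via Matsushima's criterion (the closed orbit is affine, hence $G/H$ affine, hence $H$ reductive), the twisted product $G\times^H V\to X$ \'etale at $[e,x]$, and the fundamental lemma to upgrade \'etale to strongly \'etale --- and as a sketch it is essentially correct. Two points deserve more care if this were written out in full. First, your tangent-space claim is slightly off as stated: if $N$ is chosen as an $H$-stable complement of $T_x(G\cdot x)$ in $T_xW$, then $d\psi_{[e,x]}$ identifies $(\operatorname{Lie}G/\operatorname{Lie}H)\oplus N$ with $T_xW$, not with $T_xX$; the standard route is to first prove $G\times^H(x+N)\to W$ is \'etale onto a saturated neighbourhood in $W$ and then pull back the closed subscheme $X$, which also handles the case where $x$ is a singular point of $X$ (where ``$T_xX$'' need not split off $T_x(G\cdot x)$ as you implicitly assume). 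Second, the entire weight of conclusion (4) rests on Luna's fundamental lemma, which you correctly identify but do not prove; invoking it is legitimate for a classical theorem, but as written your proposal establishes (1)--(3) and defers (4) rather than proving it. Neither issue is a conceptual error --- the architecture matches Luna's original argument --- but the sketch is an outline of a known proof rather than a self-contained one.
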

We now calculate \'etale slices for our case.

\begin{lemma}\label{lem: luna slice}
Let $\mathscr{X} \subset\PP^1_\mathbf{x}\times\PP^1_{\mathbf{y}}\times\PP^2_{\mathbf{z}}$ denote the non-reduced and reducible threefold which is given by the vanishing of the equation 
$$f=(x_0y_0+x_0y_1+x_1y_0+x_1y_1)(z_0z_2+z_1^2) = l(x,y)q(z).$$
Then a luna slice to $\left(\operatorname{SL}(2)\times\operatorname{SL}(2)\times\operatorname{SL}(3)\right)\cdot [\mathscr{X}]\subset\PP V^*\sslash G$ at $[\mathscr{X}]$ is given by the locally closed subset $W=:\{lq+g=0\}$, where $g\in H^0(\PP^1\times\PP^1\times\PP^2,\mathcal{O}(1,1,2))$ is a polynomial such that neither $l$ nor $q$ is a factor of $g$.
\end{lemma}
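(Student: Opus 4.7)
The plan is to apply Luna's \'etale slice theorem to the polystable point $[\mathscr{X}]\in\PP V^*$. By Proposition~\ref{polystable divisors}, the $G$-orbit of $[\mathscr{X}]$ is closed in $\PP V^{ss}$ and the stabilizer $H:=G_{[\mathscr{X}]}$ is reductive, so Luna's theorem furnishes an $H$-invariant locally closed affine slice through $[\mathscr{X}]$ transverse to the orbit; the task is to identify such a slice with the set $W$ given in the statement.

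First I will compute the tangent space to the orbit at $[\mathscr{X}]$ inside $T_{[f]}\PP V^*\cong V/\C f$. Writing $f=lq$ and using the natural tensor decomposition $V=V_{xy}^{(1,1)}\otimes V_z^{(2)}$ under which $\mathfrak{g}=\mathfrak{g}_{xy}\oplus\mathfrak{g}_z$ (with $\mathfrak{g}_{xy}=\mathfrak{sl}_2\oplus\mathfrak{sl}_2$ and $\mathfrak{g}_z=\mathfrak{sl}_3$) acts by the Leibniz rule on the tensor product, I obtain
$$
\mathfrak{g}\cdot f \;=\; (\mathfrak{g}_{xy}\cdot l)\cdot q \;+\; l\cdot(\mathfrak{g}_z\cdot q) \;\subset\; q\cdot V_{xy}^{(1,1)} + l\cdot V_z^{(2)}.
$$
Hence every direction tangent to the orbit is represented by a polynomial divisible by $l$ or by $q$. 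A quick check (the two summands meet only in $\C f\cap l\cdot(\mathfrak{g}_z\cdot q)$, which is trivial because $q\notin\mathfrak{g}_z\cdot q$ since $\mathfrak{sl}_3$ is traceless) gives $\dim(\mathfrak{g}\cdot f)=\dim(\mathfrak{g}_{xy}\cdot l)+\dim(\mathfrak{g}_z\cdot q)=3+5=8$.

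Next I will select an $H$-invariant complement $T\subset V$ to $\mathfrak{g}\cdot f$, whose generic points are precisely those $g\in V$ satisfying $l\nmid g$ and $q\nmid g$. The $H$-invariance of these non-divisibility conditions is immediate: every element of $H$ stabilizes the lines $\C\cdot l\subset V_{xy}^{(1,1)}$ and $\C\cdot q\subset V_z^{(2)}$, so the principal ideals they generate in the multigraded coordinate ring are $H$-stable, as are their complements. Taking $W=\{[lq+g]\st l\nmid g,\ q\nmid g\}$ then produces the required locally closed $H$-invariant subvariety through $[\mathscr{X}]$; Luna's theorem yields a strongly \'etale $G$-morphism $G\times_H W\to\PP V^*$ with saturated image, which proves the claim. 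The main obstacle will be the dimension count: one needs $\dim(\mathfrak{g}_{xy}\cdot l)=3$ (the $(\SL(2)\times\SL(2))$-orbit of the rank-one form $(x_0+x_1)(y_0+y_1)$ sweeps out the dense open of the Segre cone of rank-$1$ forms) together with $\dim(\mathfrak{g}_z\cdot q)=5$ (the $\SL(3)/\SO(3)$-orbit of a smooth conic), so that $T$ has the expected dimension $24-8=16$, matching the codimension of the orbit in $\PP V^*$.
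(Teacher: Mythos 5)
Your overall strategy is the same as the paper's: invoke Luna's theorem at the (assumed closed) orbit, compute the tangent space to the orbit inside $V/\C f$, and take an invariant complement. The substantive divergence is in that computation. You compute the genuine infinitesimal orbit $\mathfrak g\cdot f=(\mathfrak g_{xy}\cdot l)\,q+l\,(\mathfrak g_z\cdot q)$, of dimension $3+5=8$, whereas the paper differentiates $l_tq_t$ with $dl/dt$ and $dq/dt$ ranging over \emph{all} of $V\otimes V$ and $\C\oplus\Sym^4V$, obtaining the larger $9$-dimensional space $q\cdot(V\otimes V)+l\cdot(\C\oplus\Sym^4V)$. The latter is the tangent space to the whole locus of split forms $l'q'$, not to the $G$-orbit: the orbit of $l$ is only the $3$-dimensional Segre cone inside the $4$-dimensional $V\otimes V$, and the orbit of $q$ is the $5$-dimensional constant-discriminant locus inside the $6$-dimensional space of conics, exactly as your dimension count shows. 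Your space is the correct one for a literal application of Luna's theorem, but it produces a $16$-dimensional slice, while the paper's normal space is the $15$-dimensional $\bigl((V\otimes V)/\C l\bigr)\otimes\bigl((\C\oplus\Sym^4V)/\C q\bigr)$, and it is this smaller tensor-product description that the paper relies on afterwards (Theorem~\ref{thm: blow up on families}(3)) to identify the exceptional divisor with the space of $(2,2)$-forms. So the two arguments do not produce the same $W$, and yours will not feed into the later sections as written.

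The concrete gaps on your side are these. First, the complement $T$ is never constructed, and the condition ``generic points of $T$ are the $g$ with $l\nmid g$ and $q\nmid g$'' does not determine it: the set of such $g$ is the complement of the two linear subspaces $l\cdot(\C\oplus\Sym^4V)$ and $q\cdot(V\otimes V)$, hence open dense in all of $V$, so any $16$-dimensional subspace transverse to $\mathfrak g\cdot f$ satisfies it. Second, by dimension count any complement of the $8$-dimensional $\mathfrak g\cdot f$ meets the $9$-dimensional $q\cdot(V\otimes V)+l\cdot(\C\oplus\Sym^4V)$ in at least a line, so your $W$ necessarily contains a first-order direction along which $lq+g$ remains a product $l'q'$ --- precisely the directions the paper's $W$ is designed to exclude; you should either justify why this is harmless or explain why the larger space is the right one to quotient by. Finally, a caveat that affects both your argument and the paper's: reductivity of the stabilizer (equivalently closedness of the orbit of $f$ in $V$) is essential for Luna's theorem, yet the stabilizer of the rank-one form $l=(x_0+x_1)(y_0+y_1)$ in $\SL(2)\times\SL(2)$ sits inside a product of Borel subgroups and contains the unipotent one-parameter subgroups fixing $x_0+x_1$ and $y_0+y_1$; this needs to be reconciled with Proposition~\ref{polystable divisors}(2) before the theorem can be applied.
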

\begin{proof}
Let $X^{ss}$ denote the GIT semistable locus, and define $G:=\operatorname{SL}(2)\times\operatorname{SL}(2)\times\operatorname{SL}(3)$. As $\mathscr{X}$ is GIT-polystable, the orbit $G[\mathscr{X}]$ is closed inside $X^{ss}$. Define $V:=H^0(\PP^1,\mathcal{O}(1))$, then by \cite[Lemma 5.12]{ADL19}, we see that $$H^0(\PP^1\times\PP^1\times\PP^2,\mathcal{O}(1,1,2))=V \otimes V \otimes \left( \operatorname{Sym}^4V\oplus\C
   \right)$$
because $\operatorname{Sym}^0(V)=\C$. Note that the tangent space is given by $$T_{X^{ss},[f]}=\left(V\otimes V\otimes(\C\oplus\operatorname{Sym}^4(V))\right)/\C \cdot [f].$$ To determine the tangent space $T_{G [f],[f]}$ we consider small deformations $l_t$ of $l_0:=l$ inside $V\otimes V$ and $q_t$ of $q_0:=q$ inside $\C\oplus\operatorname{Sym}^4(V)$. Then we see that 
$$\frac{d}{dt}(l_tq_t)\big{|}_{t=0}=q\frac{dl}{dt}\big{|}_{t=0}+l\frac{dq}{dt}\big{|}_{t=0}$$
and therefore the tangent space to $G[f]$ at $[f]$ is given by $$T_{G[f],[f]}=\left(q\cdot (V\otimes V)+l\cdot\left(\C\oplus \operatorname{Sym}^4V\right)\right)/\C\cdot[f].$$
Therefore, the normal space at $[f]$ is given by the quotient
$$N_{G[f]/X^{ss},[f]}=\left(V\otimes V\otimes(\C\oplus\operatorname{Sym}^4(V))\right)/\left(q\cdot (V\otimes V)+l\cdot\left(\C\oplus \operatorname{Sym}^4V\right)\right),$$
applying the exponential map determines the luna slice 
$W=\{lq+g=0\}$, where $g\in H^0(\PP^1\times\PP^1\times\PP^2,\mathcal{O}(1,1,2))$ is a polynomial such that neither $l$ nor $q$ is a factor of $g$. 
\end{proof}

We now proceed with the characterisation of this Kirwan blow-up.

\begin{theorem}\label{thm: blow up on families}
Let $U_{1,1,2}\subset H_{1,1,2}$ be the GIT semistable locus under the action of $\PGL(2)\times \PGL(2) \times \PGL(3)$ and consider the universal family $$\left(\PP^1\times\PP^1\times\PP^2\times U_{1,1,2}\right)\rightarrow U_{1,1,2}.$$
After base change to a stacky weighted blowup $\widetilde{\mathcal{U}}\rightarrow U$ along the orbit $G\cdot [f]$ with stacky exceptional divisor $\mathcal{E}$ where $G=\operatorname{SL}(2)\times\operatorname{SL}(2)\times\operatorname{SL}(3)$ and a contraction morphism, there exists a family $\cY\rightarrow \widetilde{\mathcal{U}}$ which:
\begin{enumerate}
    \item is isomorphic to $\left(\PP^1\times\PP^1\times\PP^2\times U_{1,1,2}\setminus G\cdot[f]\right)\rightarrow U_{1,1,2}\setminus G\cdot [f]$ over the complement of $\mathcal{E}$,

    \item and whose fibres over $\mathcal{E}$ are $(2,2)$ divisors in $\PP(1,1,2)\times\PP^2$.

    \item Let $\mathcal{E}_W$ be the exceptional divisor of the stacky weighted blowup $\widetilde{\mathcal{W}}:=\widetilde{\mathcal{U}}\times_{U}W\rightarrow W$ over the luna slice $W$ in Lemma \ref{lem: luna slice}. Then the family $\cY\times_{\widetilde{\mathcal{U}}}\mathcal{E}_W$ over $\mathcal{E}_W$ is isomorphic to the universal family over $[(\mathbb{A}\setminus\{0\})/\mathbb{G}_m]$. 
\end{enumerate}
\end{theorem}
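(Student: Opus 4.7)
The plan is to work étale-locally via Luna's theorem and then globalize by $G$-equivariant descent. By Proposition \ref{polystable divisors}(2) the stabilizer $G_{\mathscr{X}}$ of the polystable point $[\mathscr{X}]$ is $\operatorname{SL}(2)^3$. Applying Luna's étale slice theorem to the closed orbit $G\cdot[\mathscr{X}]$ and using the slice $W=\{lq+g=0\}$ from Lemma \ref{lem: luna slice}, we obtain a strongly étale $G$-equivariant presentation $G\times^{G_{\mathscr{X}}}W\to U_{1,1,2}$ on a saturated open neighborhood of $G\cdot[\mathscr{X}]$. It therefore suffices to construct a $G_{\mathscr{X}}$-equivariant family over a stacky weighted blowup $\widetilde{\mathcal{W}}\to W$ of the origin satisfying the analogues of (1)--(3); the global family $\cY\to\widetilde{\mathcal{U}}$ arises by equivariant gluing.

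For the blowup, the weights are dictated by the $\mathbb{G}_m$-action coming from the one-parameter subgroup $\lambda_0=(0,0,0,0,1,0,-1)$ inside $G_{\mathscr{X}}$: since $q(\mathbf{z})=z_0z_2+z_1^2$ is $\lambda_0$-invariant of weight $2$ while $l(\mathbf{x},\mathbf{y})$ is $\lambda_0$-invariant of weight $0$, the normal space $N=N_{G[\mathscr{X}]/U_{1,1,2}}$ computed in the proof of Lemma \ref{lem: luna slice} is naturally a weight-$2$ representation. We take $\widetilde{\mathcal{W}}\to W$ to be the stacky weighted blowup at $0$ with weight $2$, whose exceptional divisor is $\mathcal{E}_W\simeq [(\mathbb{A}\smallsetminus\{0\})/\mathbb{G}_m]$, with $\mathbb{A}:=H^0(\PP^1\times\PP^2,\mathcal{O}(2,2))$ as in Section \ref{sec: GIT2}. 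Writing $g=s\,\tilde g$ with $s$ a local coordinate of weight $2$ and $[\tilde g]\in\PP(N)$, the base-changed universal family is
$$
\mathcal{X}_{\widetilde{\mathcal{W}}}\ :=\ \{\,lq+s\,\tilde g=0\,\}\ \subset\ \PP^1_{\mathbf{x}}\times\PP^1_{\mathbf{y}}\times\PP^2_{\mathbf{z}}\times\widetilde{\mathcal{W}}.
$$
The contraction is produced by running the relative MMP over $\widetilde{\mathcal{W}}$ exactly as in Section \ref{sec:birational models}. Setting $u:=x_0+x_1$, $v:=y_0+y_1$ (so $l=uv$), the rational map $(x,y;s)\mapsto(u:v:s)$ to $\PP(1,1,2)_{u,v,s}$ extends, after resolving indeterminacies, to a morphism which collapses the two components $\{u=0\}$ and $\{v=0\}$ of the central fibre of $\mathcal{X}_{\widetilde{\mathcal{W}}}$ to the vertex of $\PP(1,1,2)$. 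Composing with the projection to $\PP^2_{\mathbf{w}}$ and taking the relative anticanonical model, the family transforms into an equation of the shape $s(w_0w_1+w_2^2)+\tilde g_{2,2}(u,v,\mathbf{w})=0$ in $\PP(1,1,2)_{u,v,s}\times\PP^2_{\mathbf{w}}$, which defines $\cY\to\widetilde{\mathcal{W}}$.

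Item (1) is immediate because the contraction is an isomorphism over the complement of $\mathcal{E}_W$ by construction of $\widetilde{\mathcal{W}}$; item (2) then follows because every fibre over $\mathcal{E}_W$ has the prescribed $(2,2)$ form; for item (3), the parametrization $[\tilde g]\in \PP(N)\simeq\PP(\mathbb{A})$ together with the weight-$2$ scaling identifies $\cY\times_{\widetilde{\mathcal{W}}}\mathcal{E}_W$ with the universal family of $(2,2)$ divisors on $[(\mathbb{A}\smallsetminus\{0\})/\mathbb{G}_m]$ from Section \ref{sec: GIT2}. The hard part will be verifying that the birational modification produces a flat family of \emph{normal} $(2,2)$ divisors rather than a non-reduced or higher-dimensional degeneration; the key input is the relative anticanonical analysis of Section \ref{sec:birational models} (in particular Propositions \ref{studying H3}--\ref{prop:X=W}), which applies verbatim with the test curve $T$ replaced by $\widetilde{\mathcal{W}}$ and provides a canonical contraction, thereby also ensuring $G_{\mathscr{X}}$-equivariance and hence descent to a global family over $\widetilde{\mathcal{U}}$.
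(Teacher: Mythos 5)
Your overall framing (Luna slice, weighted blowup, equivariant descent, then a contraction) matches the paper's strategy, but the central construction is not actually carried out, and the step you defer to is not applicable. After the weighted blowup, the pulled-back universal family $\{lq+s\tilde g=0\}$ has fibre $\{lq=0\}$ over every point of $\mathcal{E}_W$ --- the non-reduced, reducible threefold $\mathscr{X}$ --- so the entire content of the theorem is in the birational modification that replaces these fibres by $(2,2)$ divisors. You propose to obtain this by a rational map $(x,y;s)\mapsto(u:v:s)$ and by ``running the relative MMP exactly as in Section \ref{sec:birational models}''. But the analysis of Section \ref{sec:birational models} is built on a one-parameter $\Q$-Gorenstein smoothing over a pointed curve $T$ whose central fibre is a K-semistable klt $\Q$-Fano (so that Theorem \ref{gor canonical K-ss limits} and the quasi-polarised K3 arguments apply); here the base is higher-dimensional and the fibre to be modified is non-reduced and reducible, so that machinery does not transfer ``verbatim''. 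The paper instead proceeds explicitly: it blows up the \emph{total space} of the universal family along a component of the fibre over $\mathcal{E}$ (a degeneration-to-the-normal-cone step), producing fibres $X_{1,1,2}\cup X_{2,2}$, exhibits the explicit divisor $\mathcal{Q}_0-\mathcal{G}$ which is relatively nef and trivial on the proper transform $\mathcal{H}$ of the old component, and contracts $\mathcal{H}$ via the base-point-free theorem; item (2) is then verified by computing the limit equation along the standard degeneration $\{x^2+y^2+z^2=tw^2\}$ of $\PP^1\times\PP^1\times\PP^2$ to $\PP(1,1,2)\times\PP^2$. None of these verifications (flatness, equivariance of the contraction, the identification of the new fibres) is present in your write-up; acknowledging the ``hard part'' does not discharge it.

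Two smaller but genuine errors. First, your weight computation is wrong: under $\lambda_0=(0,0,0,0,1,0,-1)$ both monomials of $q=z_1^2+z_0z_2$ have weight $0$, not $2$, and the normal space $\bigl((V\otimes V)/l\bigr)\otimes\bigl((\C\oplus\operatorname{Sym}^4V)/q\bigr)$ is not a pure weight-$2$ representation of $\lambda_0$; the $\G_m$ whose weight must match the weight-$(2,2)$ action on $\A$ from Section \ref{sec: GIT2} is a different one, attached to the degeneration of the quadric $\PP^1\times\PP^1$ to $\PP(1,1,2)$. Second, the central fibre $\{uvq=0\}$ has \emph{three} components, $\{u=0\}$, $\{v=0\}$ and $\{q=0\}=\PP^1\times\PP^1\times(\text{conic})$; your description of the contraction accounts only for the first two, and the image of the third under your proposed map is not analysed, so even the set-theoretic claim that the fibres over $\mathcal{E}_W$ become $(2,2)$ divisors is unverified.
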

\begin{proof}
Recall that $N_{G[f]/X^{ss},[f]}$ that we got in Lemma \ref{lem: luna slice} can be rewritten as
    \[
    \begin{aligned}
    N_{G[f]/X^{ss},[f]} &=\left(V\otimes V\otimes(\C\oplus\operatorname{Sym}^4(V))\right)/\left(q\cdot (V\otimes V)+l\cdot\left(\C\oplus \operatorname{Sym}^4V\right)\right)\\
    &= \left((V\otimes V)/l\right)\otimes \left((\C\oplus\operatorname{Sym}^4(V))/q \right),
    \end{aligned}
    \]
    with an induced $G$-action. Let $\widetilde{\mathcal{U}}\to U$ be the stacky blowup along the $G$-orbit of $[f]$ and $\widetilde{U}\to U$ the scheme-theoretic blowup which are both $G$-equivariant. Let $\mathcal{E}$ denote the corresponding stacky exceptional divisor, and  scheme-theoretic exceptional divisors, respectively. Let 
    \[
    \pi\colon\mathcal{Z}_{\widetilde{U}}\rightarrow \widetilde{U}\quad \text{and}\quad\mathcal{Z}_{\widetilde{\mathcal{U}}}\rightarrow \widetilde{U}
    \]
    be the pullback of the universal family. Take the blow-up $\psi:\mathcal{X} \rightarrow \mathcal{Z}_{\widetilde{\mathcal{U}}}$ of the universal family along the component of a line and a conic of the fibre over $\mathcal{E}$, and let $\mathcal{G}$ be the exceptional divisor. Notice that this blow-up is $\SL(2)\times\SL(2)\times \SL(3)$-equivariant and also gives a flat family over $\widetilde{\mathcal{U}}$, whose fibres over points in $\mathcal{E}$ and $\widetilde{\mathcal{U}}\setminus \mathcal{E}$ are $X_{1,1,2}\cup X_{2,2}$  and $X_{1,1,2}$ respectively, where $X_{1,1,2}$ is a $(1,1,2)$ divisor in $\PP^1\times \PP^1\times \PP^2$, and, similarly $X_{2,2}$ is a $(2,2)$ divisor in $\PP(1,1,2)\times \PP^2$. In fact, the fibres over $\widetilde{\mathcal{U}}\setminus\mathcal{E}$ are not changed. 

    Let $\mathcal{H}$ be the proper transform of $\mathcal{Z}_{\mathcal{E}}\subseteq \mathcal{Z}_{\widetilde{\mathcal{U}}}$ in $\mathcal{X}$ and note that the pre-images of any point in $\mathcal{E}$ (respectively  $\mathcal{G}$) in $\mathcal{H}$ is $X_{1,1,2}$ (respectively $X_{2,2}$). Now we prove that $\mathcal{H}$ is contractible over $\widetilde{\mathcal{U}}$. Let $\mathcal{Q}_0$ be a divisor class on $\mathcal{X}$ obtained by pulling back the divisor $Q_0 = L+C$ in $\PP^1\times \PP^1\times \PP^2$, where $C$ is some conic in $\PP^2$ and $L$ is a generic $(1,1)$ divisor in $\PP^1\times \PP^1$. Then $\mathcal{Q}_0-\mathcal{G}$ is positive over $\widetilde{\mathcal{U}}\setminus\mathcal{E}$ and trivial over $\mathcal{E}$. Moreover, the restriction of $\mathcal{Q}_0-\mathcal{G}$ on the $X_{2,2}$ component of any fibre over $\mathcal{E}$ is $2f+2e$, where $f$ and $e$ are the two generators of the Picard group on $X_{2,2}$ respectively. In particular, we have that $(\mathcal{Q}_0-\mathcal{G}.f)=2$ and $(\mathcal{Q}_0-\mathcal{G}.e)=2$. As $\mathcal{Q}_0-\mathcal{G}$ is relatively nef over $\mathcal{E}$ and trivial on $\mathcal{H}$, then $a(\mathcal{Q}_0-\mathcal{G})-K_{\mathcal{Z}_{\widetilde{\mathcal{U}}}}$ is relatively ample over $\widetilde{\mathcal{U}}$ for some $a>0$. It follows from the base point free theorem that the divisor $\mathcal{Q}_0-\mathcal{G}$ gives a desired contraction over $\widetilde{\mathcal{U}}$. Denote by $\cY$ the image of the contraction. By our construction, the fibres in $\cY$ over points in $\widetilde{\mathcal{U}}\setminus \mathcal{E}$ and in $\mathcal{E}$ are $X_{1,1,2}$ and $X_{2,2}$ respectively. Moreover, as the line bundle $\mathcal{Q}_0-\mathcal{G}$ is naturally $\SL(2)\times \SL(2)\times \SL(3)$- polarised, the construction is $\SL(2)\times \SL(2)\times \SL(3)$-equivariant, and hence the construction descends to the quotient. This proves (1). 

    Take the standard degeneration of $\PP^1\times \PP^1\times \PP^2$ to $\PP(1,1,2)\times \PP^2$ in $\PP^3_{x,y,z,w} \times \PP^2_{z_0,z_1,z_2}$, given by the family $\cX_t=\{x^2+y^2+z^2=tw^2\}\rightarrow \A^1_t$, i.e. $\cX_t\times \PP^2\to  \A^1_t$. The fibre over $t\neq0$ is $\cX_t\simeq \PP^1_\mathbf{x}\times \PP^1_\mathbf{y}\times \PP^2_\mathbf{z}$ and the fibre over $0$ is $Q_0\times \PP^2$ where $Q_0$ is a quadric cone in $\PP^3$. If we consider the small resolution of singularities, the special fibre becomes $\PP(1,1,2)\times \PP^2$.
    
    Taking the pull-back of the equations on the Luna slice, one deduces that an equation over $t\neq 0$ is of the form (after a change of coordinates)    $$(l_{1,1}(x,y,z,w)(z_0z_1+z_2^2)+f_{1,1,2}(x,y,z,w,z_0,z_1,z_2)=0$$ and the equation over $t=0$ is $$s (z_0z_1+z_2^2)+(u^2+v^2+uv)(z_0z_1+z_2^2)+ g_{2,2}(u,v,z_0,z_1,z_2),$$
    i.e.
    $$s(z_0z_1+z_2^2)+ g_{2,2}(u,v,z_0,z_1,z_2),$$
    proving (2). 

    As the assigned weight on $\mathcal{N}_{G\cdot[Z]/U}$ is the same as that of the $\mathbb{G}_m$-action $\sigma$ on $\mathbb{A}$, we deduce (3) as desired.
    
\end{proof}

\subsection{K-moduli as global GIT quotients}\label{sec: iso}

We will now describe the K-moduli space of Fano threefold famaily \textnumero3.3

Let $$\pi:\PP(1,1,2)\times \PP^2\times \A \longrightarrow \A$$ be the universal family over $\A$. Here the fibre of $\pi$ over each point $f_{2,2}$ of $\A$ is $F_{2,2}$, where $F_{2,2}$ is as before. Then the $\mathbb{G}_m$-action on $\A$ naturally lifts to the universal family. After taking the $\mathbb{G}_m$-quotient, we obtain a $\Q$-Gorenstein
family of Fano varieties over $[(\A\setminus\{0\})/\mathbb{G}_m]$, and the CM $\Q$-line bundle $\lambda_{\CM,\pi}$ on $\A$ also descends to a $\Q$-line bundle on $H_{2,2}$, denoted by $\Lambda_{\CM}$, which is again ample by \cite{CP21,XZ21}.

\begin{theorem}\label{thm:K-ps iff GIT ps}
Let $X$ be a K-polystable variety in $M^K_{\textup{№3.3}}$. Then 
\begin{enumerate}
    \item either $X$ is a GIT-polystable $(1,1,2)$ divisor in $\PP^1\times \PP^1\times \PP^2$, not projectively isomorphic to $[\widetilde{X}]$;
    \item or $X$ is a GIT-polystable $(2,2)$ divisor in $\PP(1,1,2)\times \PP^2$. 
\end{enumerate} 
Conversely, any such variety $X$ is K-polystable.
\end{theorem}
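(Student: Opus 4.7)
By Corollary \ref{cor:main}, every K-polystable $X \in M^K_{\textup{№3.3}}$ is isomorphic to either a $(1,1,2)$ divisor in $\PP^1\times\PP^1\times\PP^2$ or a $(2,2)$ divisor in $\PP(1,1,2)\times\PP^2$. The special element $\widetilde{X}$ is non-reduced and reducible, so in particular it is not a $\Q$-Fano variety, and hence does not appear in $M^K_{\textup{№3.3}}$; this already establishes the exclusion in (1). The overall strategy is to build a $G$-equivariant classifying morphism $\Phi\colon \widetilde{\mathcal{U}}^{ss}_{1,1,2}\to M^K_{\textup{№3.3}}$, where $G=\PGL(2)\times\PGL(2)\times\PGL(3)$, and to read off both implications of the theorem from the fact that $\Phi$ sends closed orbits to closed orbits.

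For the reverse implication (GIT-polystable $\Rightarrow$ K-polystable), the plan is to prove that every geometric fibre of the family $\cY\to \widetilde{\mathcal{U}}^{ss}_{1,1,2}$ from Theorem \ref{thm: blow up on families} is K-semistable, which yields $\Phi$. The generic fibre is a smooth member of family \textnumero3.3, hence K-stable by \cite{acc2021}, so the locus where fibres are K-semistable is a nonempty open subset by \cite{BLX19, Xu20}. To show it exhausts $\widetilde{\mathcal{U}}^{ss}_{1,1,2}$ I would combine Corollary \ref{cor:main} (every K-semistable limit is of one of the two allowed types, so must be realised as a fibre of $\cY$) with the identification in Theorem \ref{thm: blow up on families}(3) between the restriction of $\cY$ to the exceptional divisor $\mathcal{E}$ and the universal $(2,2)$-family, whose generic fibre is again smooth and K-stable. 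Once $\Phi$ is defined on the whole of $\widetilde{\mathcal{U}}^{ss}_{1,1,2}$, a GIT-polystable orbit is a closed $G$-orbit, hence maps under $\Phi$ to a closed point of $\M^K_{\textup{№3.3}}$, i.e.\ a K-polystable variety.

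For the forward implication (K-polystable $\Rightarrow$ GIT-polystable), I would argue as follows. Given $X$ K-polystable, surjectivity of $\Phi$ on closed points (immediate from Corollary \ref{cor:main} together with the construction of $\cY$) produces some $[W]\in\widetilde{\mathcal{U}}^{ss}_{1,1,2}$ with $\Phi([W])=[X]$. The unique closed $G$-orbit $[W_0]\subset\overline{G\cdot W}$ is GIT-polystable; by the reverse direction it maps under $\Phi$ to a K-polystable point, which must then be the unique K-polystable degeneration of $[X]$, namely $[X]$ itself. Hence $\Phi([W_0])=\Phi([W])$, and separatedness of the good moduli space $M^K_{\textup{№3.3}}$ together with the fact that $\Phi$ separates closed orbits forces $W\in G\cdot W_0$, so $[W]$ is itself GIT-polystable. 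The main obstacle, and the content that underwrites the separation-of-closed-orbits step above, is the comparison between the CM line bundle $\Lambda_{\CM}$ and the natural Kirwan polarization on $\widetilde{\mathcal{U}}^{ss}_{1,1,2}$: because the family $\cY$ changes ambient embedding across $\mathcal{E}$ (from $(1,1,2)$ divisors in $\PP^1\times\PP^1\times\PP^2$ to $(2,2)$ divisors in $\PP(1,1,2)\times\PP^2$), the Paul--Tian identification of CM weights with Hilbert--Mumford weights must be checked uniformly along a transverse one-parameter test degeneration crossing $\mathcal{E}$, using the explicit degeneration described in Theorem \ref{thm: blow up on families}(2).
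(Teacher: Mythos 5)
Your overall architecture (moduli continuity via a classifying morphism from the Kirwan blow-up) is the right one, and your exclusion of $[\widetilde{X}]$ and your use of Corollary \ref{cor:main} match the paper. However, there is a genuine gap in your reverse implication, and it propagates into the forward one. You claim that the K-semistable locus in $\widetilde{\mathcal{U}}^{ss}_{1,1,2}$ is nonempty and open and then assert it "exhausts" the semistable locus by combining Corollary \ref{cor:main} with Theorem \ref{thm: blow up on families}(3); but knowing that every K-semistable limit is \emph{realised} as some fibre of $\cY$ does not show that every GIT-semistable fibre \emph{is} K-semistable. Likewise, your statement that $\Phi$ "sends closed orbits to closed orbits" is not formal: a priori the fibre over a GIT-polystable point is only K-semistable and S-equivalent to a K-polystable variety, not itself K-polystable. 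Both gaps are closed by the same two-limit comparison, which is what the paper actually does: given a GIT-semistable point, degenerate a family of smooth (hence K-stable) members to it; properness of the K-moduli space produces a K-polystable limit $X_0'$ after finite base change; Corollary \ref{cor:main} identifies $X_0'$ as one of the two allowed divisor types, and the already-established forward direction shows $X_0'$ is GIT-polystable; separatedness of the GIT quotient then forces the original fibre to specially degenerate to $g\cdot X_0'$, so it is K-semistable by openness, and if it was moreover GIT-polystable it must equal $g\cdot X_0'$, hence is K-polystable. Without this comparison your plan does not close.

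On the forward direction, the paper is much more direct than your route through $\Phi$: it applies \cite[Theorem 2.22]{ADL19} to each of the two universal families separately (the $(1,1,2)$ family over $H_{1,1,2}$ and the $(2,2)$ family over $[(\A\setminus\{0\})/\G_m]$), using only that the CM $\Q$-line bundle is ample on each parameter space by \cite{CP21,XZ21}. This sidesteps the uniformity-across-$\mathcal{E}$ issue you flag as the main obstacle: the CM-versus-Kirwan-polarisation comparison across the exceptional divisor is deferred to the later stack isomorphism (Theorem \ref{thm: iso of stacks}) and is not needed for this theorem. Note also that your forward direction logically depends on your reverse direction ("by the reverse direction it maps under $\Phi$ to a K-polystable point"), whereas in the paper the dependency runs the other way; since your reverse direction is the gappy one, the whole argument is at risk as written.
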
 
\begin{proof}
    We will start by showing that K-polystability implies GIT polystability. By Corollary \ref{cor:main} we know that $X$ is either a $(1,1,2)$ divisor in $\PP^1\times \PP^1\times \PP^2$ or a $(2,2)$ divisor in $\PP(1,1,2)\times \PP^2$. We will analyse (1) first. Consider the universal family $\pi\colon\PP^1\times \PP^1\times \PP^2\times H_{1,1,2}\rightarrow H_{1,1,2}$, where $H_{1,1,2}$ is the parameter scheme of $(1,1,2)$ divisors in $\PP^1\times \PP^1\times \PP^2$. By \cite[Theorem 2.22]{ADL19} we have that $X$ is GIT polystable if it is K-polystable. Similarly, for (2), by considering the universal family 
    $$\pi:\PP(1,1,2)\times \PP^2\times \A \longrightarrow \A,$$
    \cite[Theorem 2.22]{ADL19} combined with the discussion above shows that $X$ is GIT polystable if it is K-polystable.

    For the reverse direction, we analyse (1) first. Let $[X_{1,1,2}]$ be a GIT semistable point where $X_{1,1,2}$ is not as in Proposition \ref{polystable divisors}.(2), i.e. $[X_{1,1,2}]\in U$. Take $\{X_t\}_{t\in U}$ a family of $(1,1,2)$ divisors over a smooth pointed curve $(0\in T)$ such that $X_0=X_{1,1,2}$, $X_t$ is smooth and hence K-stable for any $t \in T \setminus \{0\}$ by \cite{CHELTSOV_FUJITA_KISHIMOTO_OKADA_2023}. Then by the properness of K-moduli spaces, we have a K-polystable limit $X'_0$ of $X_t$ as $t \in 0$, after a possible finite base change of $T$. By Corollary \ref{cor:main}, we know that $X'_0$ is either isomorphic to a $(1,1,2)$ divisor in $\PP^1\times \PP^1\times \PP^2$, or a $(2,2)$ divisor in $\PP(1,1,2)\times \PP^2$. By the separatedness of GIT quotients, we know that $X_{1,1,2}$ specially degenerates to $g\cdot X'_0$ for some $g \in\PGL(2)\times \PGL(2)\times \PGL(3)$. Thus $X_{1,1,2}$ is K-semistable by the openness of K-semistability. If in addition $X_{1,1,2}$ is GIT polystable, then $X_{1,1,2}$ has a K-polystable limit $X_0'$. In particular, we know that $X_0'$ is a GIT-polystable point which is S-equivalent to $X_{1,1,2}$. Hence $X_{1,1,2} = g \cdot X'_0$ for some $g\in\PGL(2)\times \PGL(2)\times \PGL(3)$ and $X_{1,1,2}$ is K-polystable.

    Now we analyse (2). Let  $F_{2,2}$ be a GIT-semistable divisor. We can take a family of divisors $(F_{2,2,b})_{b\in B}$ over a smooth pointed curve $0\in B$ such that $F_{2,2,0}\simeq F_{2,2}$ and that $F_{2,2,b}$ is a general $(2,2)$ divisor in $\PP(1,1,2)\times \PP^2$. By the properness of K-moduli spaces, one can find a limit $F'_{2,2,0}$ as $b\to 0$ such that $F'_{2,2,0}$ is K-polystable, after a finite base change. Then $F'_{2,2,0}$ is GIT polystable, and by the separatedness of GIT moduli spaces, we deduce that $F_{2,2}$ specially degenerates to $g\cdot (F'_{2,2,0})$ for some $g\in \PGL(2)\times \PGL(3)$. By openness of K-semistability, one has that $F_{2,2}$ is K-semistable. If moreover $F_{2,2}$ is  GIT polystable, then we must have that $F_{2,2}=g\cdot F'_{2,2,0}$ for some $g\in \PGL(2)\times \PGL(3)$, and hence $F_{2,2}$ is K-polystable.
\end{proof}

Let $\mathcal{I}\subseteq \mathcal{O}_{U}$ be the ideal sheaf such that $\widetilde{U}=\Bl_{\mathcal{I}}U$, and $\overline{\mathcal{I}}_t\subseteq \mathcal{O}_{H_{1,1,2}}$ be the $\SL(2)\times \SL(2)\times \SL(3)$-equivariantly extended ideal sheaf of $\mathcal{I}$ whose cosupport is the closure of $G\cdot [\widetilde{X}]$ in $H_{1,1,2}$. In fact, $\overline{\mathcal{I}}_t$ is the ideal sheaf of the smooth locus $G\cdot [\widetilde{X}$. Let $\pi_{H}:\widetilde{H}=\Bl_{\overline{\mathcal{I}}}(H_{1,1,2})\rightarrow H_{1,1,2}$ be the blow-up and $\overline{E}$ the exceptional divisor. Then the line bundle $L_{k}:=\pi_H^{*}\mathcal{O}(k)\otimes\mathcal{O}(-E)$ is an $\SL(2)\times \SL(2)\times \SL(3)$-linearised polarization on $\widetilde{H}$ for any sufficiently divisible and large integer $k$. It follows from \cite{Kir85} that the GIT-stability of $(\widetilde{H},\mathcal{L}_{k})$ is independent of the choice of $k\gg1$, and the GIT-semistable locus $\widetilde{H}^{ss}$ is contained in $\widetilde{U}=\pi_{H}^{-1}(U)$. Let $U^{ps}$ and $\widetilde{H}^{ps}$ be the GIT-polystable loci in $H_{1,1,2}$ and $\widetilde{H}$ respectively. Set $\widetilde{\mathcal{U}}^{ss}:=\widetilde{\mathcal{U}}\times_{\widetilde{U}}\widetilde{H}^{ss}$, and $\widetilde{\mathcal{U}}^{ps}:=\widetilde{\mathcal{U}}\times_{\widetilde{U}}\widetilde{H}^{ps}$. Let $\mathcal{M}^K_{3.3}$ be the K-moduli stack parametrising K-semistable Fano varieties in Fano threefold family \textnumero 3.3, with good moduli space ${M}^K_{3.3}$.

\begin{theorem}\label{thm: iso of stacks}
    There exists an isomorphism $$\psi:\left[\widetilde{\mathcal{U}}^{ss}/\PGL(2)\times \PGL(2)\times \PGL(3)\right]\longrightarrow \mathcal{M}^K_{3.3}.$$
\end{theorem}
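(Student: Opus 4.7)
The overall strategy mirrors the comparison arguments used in \cite{ADL19} and \cite{devleming2024kmodulispacefamilyconic}: construct a morphism from the quotient stack to the K-moduli stack via the universal property of K-moduli, then prove it is an isomorphism by checking bijectivity on closed points (using Theorem \ref{thm:K-ps iff GIT ps}) and étaleness (using smoothness on both sides from Corollary \ref{cor:stack-smooth}).

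The first step is to produce the morphism. The family $\mathcal{Y}\to\widetilde{\mathcal{U}}^{ss}$ constructed in Theorem \ref{thm: blow up on families} is a flat $\Q$-Gorenstein family of Fano threefolds of volume $18$ whose fibres are either $(1,1,2)$-divisors in $\PP^1\times\PP^1\times\PP^2$ or $(2,2)$-divisors in $\PP(1,1,2)\times\PP^2$. Combining the GIT semistability of the fibres (guaranteed by working over $\widetilde{\mathcal{U}}^{ss}$) with Theorem \ref{thm:K-ps iff GIT ps}, every fibre is K-semistable. By Theorem \ref{K-moduli theorem} the family induces a morphism $\widetilde{\psi}\colon\widetilde{\mathcal{U}}^{ss}\to\mathcal{M}^K_{3.3}$, and since the whole construction is $G:=\PGL(2)\times\PGL(2)\times\PGL(3)$-equivariant with trivial $G$-action on the target, $\widetilde{\psi}$ descends to the claimed morphism
$$\psi\colon\left[\widetilde{\mathcal{U}}^{ss}/G\right]\longrightarrow \mathcal{M}^K_{3.3}.$$

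The second step is bijectivity on closed points. Closed points of the source correspond to GIT-polystable orbits in $\widetilde{H}^{ss}$, which by the Kirwan blow-up construction split as (i) GIT-polystable $(1,1,2)$-divisors not projectively equivalent to $\widetilde{X}$ (Proposition \ref{polystable divisors}), together with (ii) GIT-polystable $(2,2)$-divisors arising from the exceptional divisor $\mathcal{E}$ (by Theorem \ref{thm: blow up on families}(3) and Theorem \ref{GIT ps for 2,2}). Theorem \ref{thm:K-ps iff GIT ps} identifies exactly the same list with the K-polystable closed points of $\mathcal{M}^K_{3.3}$, giving the bijection. For representability, I would check that for each such closed point the stabilizer in $G$ agrees with the automorphism group of the associated Fano variety, using the explicit description of automorphisms of $(1,1,2)$- and $(2,2)$-divisors and, on the exceptional locus, the residual $\mathbb{G}_m$-gerbe coming from the weight of the Kirwan blow-up (cf. the computation on the Luna slice in Lemma \ref{lem: luna slice}).

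The third step is étaleness. Both stacks are smooth: the target by Corollary \ref{cor:stack-smooth}, and the source by construction (the Kirwan blow-up of a smooth stack along a smooth closed substack, restricted to a semistable open). It therefore suffices to compare tangent spaces at every closed point. Proposition \ref{deformation} provides unobstructed deformations of $(1,1,2)$- and $(2,2)$-divisors, so the miniversal deformation space of any central fibre is freely generated by the polynomial coefficients modulo the tangent space of the relevant projective group action; this is exactly the normal slice recorded in the Luna slice of Lemma \ref{lem: luna slice} in the $(1,1,2)$ case and, via Theorem \ref{thm: blow up on families}(3), in the $\mathbb{G}_m$-quotient of $\mathbb{A}$ in the $(2,2)$ case. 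Hence the Kodaira--Spencer map of $\mathcal{Y}$ identifies the normal space to every $G$-orbit in $\widetilde{\mathcal{U}}^{ss}$ with the corresponding deformation space of the fibre, so $\psi$ induces isomorphisms on tangent spaces and is étale. Finally, $\psi$ is étale, bijective on closed points, and both stacks admit proper good moduli spaces; hence $\psi$ is finite étale of degree one, i.e.\ an isomorphism.

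The main obstacle is the third step: verifying that at points of the exceptional divisor $\mathcal{E}$ the normal slice of the Kirwan blow-up, which by Lemma \ref{lem: luna slice} and Theorem \ref{thm: blow up on families}(3) encodes the universal $(2,2)$-divisor parameters, matches the $\Q$-Gorenstein miniversal deformation space of the corresponding degenerate Fano threefold. This requires combining the slice computation with the fact that the small $\Q$-factorialization and relative MMP used to produce $\mathcal{Y}$ preserves deformation spaces, which follows from crepancy and Theorem \ref{gor canonical K-ss limits}(3).
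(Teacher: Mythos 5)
Your proposal is correct and follows essentially the same route as the paper: the morphism $\psi$ is obtained from the family of Theorem \ref{thm: blow up on families} via the universal property of the K-moduli stack (the paper deduces K-semistability of all fibres by first checking K-polystability over the polystable locus using Theorem \ref{thm:K-ps iff GIT ps} and then invoking openness, which is slightly more careful than quoting Theorem \ref{thm:K-ps iff GIT ps} directly for semistable fibres), and the isomorphism is then established by the Alper's-criterion argument of \cite[Theorem 5.15]{ADL19}, whose ingredients (bijection on closed points, stabilizer preservation, smoothness and tangent-space comparison) you spell out explicitly where the paper simply cites the reference.
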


\begin{proof}
    Let $E^{ps}_{V}$ be the GIT-polystable locus in the exceptional divisor $E_V$ of the blow-up $\widetilde{V}\rightarrow V$. Then by \cite{Kir85}, we have that $$\widetilde{U}^{ps}=\pi_{H}^{-1}\left(U^{ps}\setminus G\cdot[\widetilde{X}]\right)\cup G\cdot E^{ps}_{W}.$$ By Theorems \ref{thm: blow up on families} and \ref{thm:K-ps iff GIT ps}, the fibres of $\cY\rightarrow \widetilde{\mathcal{U}}$ over $\widetilde{U}^{ps}$ are all K-polystable. By openness, we deduce that each fibre over $\widetilde{U}^{ss}$ is K-semistable. The existence of morphism $\psi$ follows from the universal property of the K-moduli stacks.

    The proof of isomorphism between stacks is the same as that of \cite[Theorem 5.15]{ADL19}, which makes use of Alper's criterion (cf. \cite[Proposition 6.4]{Alp13}).
\end{proof}

\subsection{Classification of K-(semi/poly)stable objects}

Combining the results of Sections \ref{sec: GIT1} and \ref{sec: iso} as well as the GIT classification of \cite[\S 3]{devleming2024kmodulispacefamilyconic} which we summarise in Section \ref{sec: GIT2} and Theorems \ref{GIT ss for 2,2} and \ref{GIT ps for 2,2} we obtain the following explicit description of K-semistable, polystable and stable Fano varieties in family \textnumero3.3, which we include here for completion.

\begin{theorem}\label{thm: full K-ss description}
 A Fano threefold in family \textnumero3.3 is
    \begin{enumerate}
        \item K-stable if and only if it is smooth;
        \item strictly K-semistable if and only if $X$ is either a $(1,1,2)$ divisor in $\PP^1\times \PP^1\times\PP^2$ in which case $X$ has either
        \begin{enumerate}
            \item non-isolated singularities of multiplicity $2$, or
       \item 12 $A_1$ singulatities, or
       \item one $A_3$ singularity, or
       \item one $A_3$ and one $A_1$ singularity, or
       \item one $D_4$ singularity.
        \end{enumerate}
        or a $(2,2)$ divisor in $\PP(1,1,2)_{u,v,s}\times\PP^2_{\mathbf{w}}$ in which case $X = V(f)$, where $f = s(w_0w_1+w_2^2)+g_{2,2}(u,v,\mathbf{w})$, such that $Y = V(g_{2,2})$ is a $(2,2)$ divisor in $\PP^1\times \PP^2$ which has 
        \begin{enumerate}
           \item non isolated singularities along the point at infinity of $\PP^1$ and is reducible, or
           \item two $A_1$ singularities, or
           \item one $A_2$ singularity, or 
           \item one $A_3$ singularity;
        \end{enumerate}
        \item strictly K-polystable if and only if 
        if $X$ is either a $(1,1,2)$ divisor in $\PP^1\times \PP^1\times\PP^2$ in which case $X$ has either
        \begin{enumerate}
           \item two non-isolated singularities of multiplicity $2$, or
       \item 8 $A_1$ singularities, or
       \item two $A_3$ singularities, or
       \item two $A_3$ and two $A_1$ singularities, or
       \item two $D_4$ singularities.
        \end{enumerate}
        or a $(2,2)$ divisor in $\PP(1,1,2)_{u,v,s}\times\PP^2_{\mathbf{w}}$ in which case $X = V(f)$, where $f = s(w_0w_1+w_2^2)+g_{2,2}(u,v,\mathbf{w})$, such that $Y = V(g_{2,2})$ is a $(2,2)$ divisor in $\PP^1\times \PP^2$ which has  
        \begin{enumerate}
           \item non isolated singularities along along the points at zero and at infinity of $\PP^1$ and is reducible, or
           \item four $A_1$ singularities, or
           \item two $A_2$ singularities, or 
           \item two $A_3$ singularities.
        \end{enumerate}
    \end{enumerate}
\end{theorem}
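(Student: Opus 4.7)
The plan is to combine the stack-level isomorphism of Theorem \ref{thm: iso of stacks} with the explicit GIT classifications already established in Sections \ref{sec: GIT1} and \ref{sec: GIT2}. By Corollary \ref{cor:main} every point of $\M^K_{\textup{№3.3}}$ is represented either by a $(1,1,2)$ divisor $X_{1,1,2}\subset\PP^1\times\PP^1\times\PP^2$ or by a $(2,2)$ divisor $X_{2,2}\subset\PP(1,1,2)\times\PP^2$, and by Theorem \ref{thm: blow up on families} the latter arise precisely as the fibres over the exceptional divisor of the Kirwan blow up $\widetilde{\mathcal{U}}^{ss}_{1,1,2}$. Under this identification, Theorem \ref{thm:K-ps iff GIT ps} equates K-polystability with GIT polystability of the chosen representative, with the caveat that the special orbit $[\widetilde{X}]$ in the $(1,1,2)$ picture is blown up and replaced by the $(2,2)$ stratum.

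For part (1), one direction is immediate from \cite{CHELTSOV_FUJITA_KISHIMOTO_OKADA_2023} and \cite[\S 5.12]{acc2021}, which show that every smooth member is K-stable. Conversely, any singular K-semistable member is strictly K-semistable: if it is a $(1,1,2)$ divisor, by Proposition \ref{stable divisors} it is not GIT-stable, hence strictly GIT-semistable and consequently (by Theorem \ref{thm: iso of stacks}) strictly K-semistable; if it is a $(2,2)$ divisor, it lies on the exceptional divisor and therefore carries a nontrivial $\mathbb{G}_m$-automorphism coming from the weighted $\mathbb{G}_m$-action of Section \ref{sec: GIT2}, so is again strictly K-semistable.

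For parts (2) and (3), I would simply read off the already classified GIT orbits. The strictly K-semistable $(1,1,2)$ divisors are enumerated in Proposition \ref{semistable divisors}, giving the five singularity types (non-isolated of multiplicity $2$, twelve $A_1$, one $A_3$, one $A_3$ plus one $A_1$, one $D_4$), while the strictly K-polystable $(1,1,2)$ representatives (other than $\widetilde{X}$) are given by Proposition \ref{polystable divisors}. For the $(2,2)$ strata, one transports Theorems \ref{GIT ss for 2,2} and \ref{GIT ps for 2,2} through the isomorphism $X=V(s(w_0w_1+w_2^2)+g_{2,2}(u,v,\mathbf{w}))$ of Theorem \ref{thm: blow up on families}: the three GIT curves $\mathcal{C}_{2\times A_1}$, $\mathcal{C}_{\text{non-fin}}$ and $\mathcal{C}_{\text{non-red}}$, together with their distinguished polystable points, correspond respectively to the $2A_1$, $A_2$/$A_3$, and non-isolated reducible configurations on $Y=V(g_{2,2})$ listed in the statement (and to their symmetrised $4A_1$, $2A_2$, $2A_3$ polystable limits).

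The only substantive obstacle is bookkeeping: one must verify that the explicit normal forms produced by the Luna-slice computation of Theorem \ref{thm: blow up on families} and by the algorithm of \cite{karagiorgis2023gitstabilitydivisorsproducts} match, singularity by singularity, the lists appearing in Theorem \ref{thm: full K-ss description}. Concretely, this means checking the singular loci of the polynomials listed in the proofs of Propositions \ref{semistable divisors} and \ref{polystable divisors}, and of the families $f_b^{(i)}$ of Theorem \ref{GIT ss for 2,2}, against the stated ADE-type configurations. These checks are routine local computations, but they must be carried out case by case to secure the final classification.
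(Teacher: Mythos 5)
Your proposal follows exactly the route the paper takes: the paper offers no separate proof of this theorem, deriving it in one sentence by combining Corollary \ref{cor:main}, Theorems \ref{thm: blow up on families}, \ref{thm:K-ps iff GIT ps} and \ref{thm: iso of stacks} with the GIT classifications in Propositions \ref{stable divisors}, \ref{semistable divisors}, \ref{polystable divisors} and Theorems \ref{GIT ss for 2,2}, \ref{GIT ps for 2,2}. Your write-up is a faithful expansion of that combination, including the caveat about the blown-up orbit of $[\widetilde X]$.

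One justification you add is not correct as stated: for part (1) you claim that a singular $(2,2)$ divisor on the exceptional locus is strictly K-semistable because it ``carries a nontrivial $\mathbb{G}_m$-automorphism coming from the weighted $\mathbb{G}_m$-action of Section \ref{sec: GIT2}.'' That $\mathbb{G}_m$ acts on the affine parameter space $\A$ by rescaling the polynomial $g_{2,2}$ --- it is the presentation of $H_{2,2}$ as the coarse space of $[(\A\setminus\{0\})/\mathbb{G}_m]$ --- and does not act as automorphisms of a fixed threefold $V\bigl(s(w_0w_1+w_2^2)+g_{2,2}\bigr)$; for generic $g_{2,2}$ such a threefold has finite automorphism group. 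The correct way to exclude K-stability of members supported over the exceptional divisor is through the GIT (non)stability of the corresponding point of the Kirwan blow-up, i.e.\ of $[g_{2,2}]$ under the residual action of the stabiliser of $[\widetilde X]$, as in Theorems \ref{thm:K-ps iff GIT ps} and \ref{thm: iso of stacks}. Aside from this misattribution, the argument is the paper's own.
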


\printbibliography
\end{document}